\theoremstyle{theorem}
\newtheorem{theorem}{Theorem}[section]
\newtheorem{lemma}[theorem]{Lemma}
\theoremstyle{definition}
\newtheorem{assumption}[theorem]{Assumption}
\newtheorem{remark}[theorem]{Remark}
\newcommand{\E}{\ensuremath{\mathbb{E}}}  
\newcommand{\F}{\ensuremath{\mathbb{F}}}  
\newcommand{\R}{\ensuremath{\mathbb{R}}}
\newcommand{\Prob}{\ensuremath{\mathbb{P}}}
\begin{document}

% Title. If the supplement option is on, then "Supplementary Material"
% is automatically inserted before the title.
\title[Exponential Convergence of PIA for Controlled Diffusions]{Exponential Convergence and stability of Howard's Policy Improvement Algorithm for Controlled Diffusions}

\date{\today, }
% Authors: full names plus addresses.
\author{B. Kerimkulov}
\address{\href{http://www.maxwell.ac.uk/migsaa}{Maxwell Institute Graduate School in Analysis and its Applications}, Edinburgh, UK.}
\email{B.Kerimkulov@sms.ed.ac.uk}
\author{D. \v{S}i\v{s}ka}
\address{\href{https://www.maths.ed.ac.uk}{School of Mathematics, University of Edinburgh} and \href{https://vegaprotocol.io}{Vega Protocol}}
\email{D.Siska@ed.ac.uk}
\author{{\L}. Szpruch}
\address{\href{https://www.maths.ed.ac.uk}{School of Mathematics, University of Edinburgh} and \href{https://www.turing.ac.uk}{Alan Turing Institute}}
\email{L.Szpruch@ed.ac.uk}

\thanks{Supported by the Maxwell Institute Graduate School in Analysis and its Applications, a Centre for Doctoral Training funded by the UK Engineering and Physical Sciences Research Council (grant EP/L016508/01), the Scottish Funding Council, Heriot-Watt University and the University of Edinburgh.}

\keywords{Policy Improvement Algorithm, Stochastic Control, Backward Stochastic Differential Equation}

\subjclass[2010]{93E20, 60H30, 65N12, 49L20}

\maketitle

\begin{abstract}
Optimal control problems are inherently hard to solve 
as the optimization must be performed simultaneously with updating the underlying system.
Starting from an initial guess, Howard's policy improvement algorithm separates the step of updating the trajectory of the dynamical system from the optimization and iterations of this should converge to the optimal control.
In the discrete space-time setting this is often the case and even rates of convergence are known. 
In the continuous space-time setting of controlled diffusion the algorithm consists of solving a linear PDE followed by a maximization problem. This has been shown to converge; in some situations, however no global rate is known.
The first main contribution of this paper is to establish global rate of convergence for the policy improvement algorithm and a variant, called here the gradient iteration algorithm. 
The second main contribution is the proof of stability of the algorithms under perturbations to both the accuracy of the linear PDE solution and the accuracy of the maximization step. 
The proof technique is new in this context as it uses the theory of backward stochastic differential equations.
\end{abstract}

\section{Introduction}
Stochastic control problems arise naturally in a range of applications in engineering, economics, and finance. 
Apart from very specific cases such as linear-quadratic control in engineering or the Merton 
portfolio optimization task in finance,  stochastic control problems typically have no closed form 
solutions and have to be solved numerically. 
In this paper we consider the policy iteration algorithm and gradient iteration
algorithm; see Algorithms~\ref{alg pia} and~\ref{alg via}. 
These are effectively a linearization method for the inherently nonlinear problem
and play an essential role in numerical solutions of stochastic control problems.

We will consider the continuous space, continuous time problem where 
the controlled system is modeled by an $\mathbb R^d$-valued diffusion process.
Let $W$ be a $d'$-dimensional Wiener martingale on a filtered probability space $(\Omega, \mathcal{F}, (\mathcal{F}_t)_{t\ge 0}, \Prob)$. 
Let us fix a finite time $T\in(0,\infty)$ and consider the controlled SDE
\begin{equation}\label{sde}
dX_s=b^{\alpha}(s,X_s)\,ds+\sigma(s,X_s)\,dW_s\,,\,\, s\in [t,T]\,,\,\,\, X_t = x\,.
\end{equation}
Here $\alpha=(\alpha_s)$ is a control belonging to the space of admissible controls
$\mathcal A$, valued in $A\subseteq\R^m$, and we will write $X^{t,x,\alpha}$ to denote the solution of~\eqref{sde} 
which starts from $x$ at time $t$ while being controlled by $\alpha$.
We shall consider the gain functional in the form
\begin{equation}\label{problem}
J(t,x,\alpha):=\E\left[\int_{t}^{T}f^{\alpha}(s,X_s^{t,x,\alpha})ds+g(X_T^{t,x,\alpha})\right]
\end{equation}
for all $(t,x)\in[0,T]\times\R^d$ and $\alpha\in\mathcal{A}$. 
The value function $v=v(t,x)$ is given for all $t\in[0,T]$ and $x\in \mathbb R^d$ by
\begin{equation}\label{cp}
v(t,x)=\sup_{\alpha\in\mathcal{A}}J(t,x,\alpha).
\end{equation}
We wish to solve the optimization problem, i.e., to find either the value function $v$
or the optimal control $\alpha^*$ which achieves the maximum (or, if the supremum
cannot be reached by $\alpha \in \mathcal A$, then an $\varepsilon$-optimal control $\alpha^\varepsilon \in \mathcal A$ such that $v(t,x) \leq J(t,x,\alpha^\varepsilon) + \varepsilon$). 
It is well known that (see, e.g., Krylov~\cite{krylov controlled}) that under reasonable assumptions the value function satisfies the Bellman PDE:
\begin{align}\label{HJB general}
\begin{split}
\partial_tv + \frac{1}{2}\text{tr}(\sigma \sigma^\top D^2_xv)+ \sup_{a\in A}\left(b^aD_xv+f^a\right)&=0\text{ on }[0,T)\times\R^d,\\
v(T,x)&=g(x)\,\,\text{ on } x\in\R^d\,.
\end{split}
\end{align}
Moreover (again see Krylov~\cite{krylov controlled}), it is sufficient to consider  
Markovian controls, i.e., processes $\alpha_s=a(s,X_s^{t,x,\alpha})$ for some measurable function $a:[0,T]\times\R^d \to A$.
Thus if we have obtained the value function, then we can find the optimal control
(if it exists) as
\[
a^\ast(t,x) = \arg \max_{a\in A}\big( b^a(t,x) (D_x v)(t,x) + f^a(t,x)\big)\,.
\]

It is rarely possible to find a closed form solution to~\eqref{HJB general} 
and so various approximations have to be employed. 
One may, for example, choose to use a finite difference method 
to discretize~\eqref{HJB general} and indeed this has been widely studied; 
see, e.g.,~\cite{dong krylov rate} or~\cite{gyongy siska finite} and references therein.
This results in a high dimensional nonlinear system of equations that still retains
the structure of~\eqref{HJB general}. 
To solve this nonlinear system one may apply the Howard's policy improvement
algorithm. 
The rate of convergence would then follow from results available on discrete 
space-time control problems. 
However, to check that the assumptions required for convergence are satisfied 
is not straightforward and moreover it is dependent on the discretization scheme used.

An alternative approach is to linearize~\eqref{HJB general} and to iterate. 
The classical approach is the Bellman--Howard policy improvement/iteration algorithm. 
The algorithm is initialized with a ``guess'' of the Markovian control. 
Given a Markovian control strategy at step $n$ one solves a linear PDE with
the given control fixed and then one uses the solution to the linear PDE to update
the Markovian control. 
In this paper we will show that this policy improvement algorithm (see Algorithm~\ref{alg pia}) and a variant
which we call the gradient iteration algorithm (see Algorithm~\ref{alg via}) converge, under appropriate assumptions, exponentially fast.

Iterative algorithms for the solution of optimal control problems go back to the work of Bellman~\cite{Bellman,Bellman:1957} where the value iteration algorithms 
for finite space-time problems are developed and their convergences are shown. 
Howard~\cite{howard:dp} proposed the policy improvement algorithm in the context 
of the discrete space-time Markovian decision rocess.
Puterman and Brumelle's paper~\cite{puterman79} was one of the first results on the convergence
properties for the policy iteration for MDP problems.
The abstract function space setting employed in the paper applies to both discrete 
and continuous settings. 
Their main observation is that the policy iteration can be viewed as a type of Newton's method.
Hence similar convergence results to those known for Newton's method follow: in particular, if the initial guess is in a neighborhood of the true solution, then the convergence will be quadratic. 
Puterman~\cite{Puterman:1981} applied this in a setting very similar 
to that of this paper to prove quadratic convergence in the neighborhood of the 
limit. 
Santos and Rust~\cite{Santos2004ConvergencePO} consider the discrete time but continous space and controls setting. 
They extend the results of Puterman and Brumelle~\cite{puterman79} to show global convergence, but without global rate, and 
quadratic local convergence rate of policy iteration and superlinear local convergence under more general conditions. In the case of stochastic control problems with jump-diffusion processes, B\"{a}uerle and Rieder~\cite{Rieder} have proved a  convergence result of the Howard's policy improvement algorithm with the help of martingale techniques.
In the fully discrete space and time setting
Bokanowski, Maroso, and Zidani~\cite{Bokanowski2009} have shown global superlinear convergence, under a monotonicity assumption on the matrices defining the control problem. 
Convergence of policy iteration has been recently proved by Jacka and Mijatovi\'c~\cite{jacka miatovic policy} and Jacka, Mijatovi\'c, and Siraj~\cite{jacka miatovic siraj coupling}.
Further, Maeda and Jacka~\cite{maeda jacka evaluation} have shown quadratic local convergence of the policy iteration algorithm for the time-independent control problem.
The local quadratic convergence is similar to the result of Puterman~\cite{Puterman:1981} but the specific control problem is different 
and moreover they employ a completely different technique based on Schauder estimates for linear PDEs. 

The main contributions of this paper are to establish a global rate of convergence 
and stability
for the policy iteration algorithm and a variant, which we call the gradient iteration algorithm. 
The analysis is carried out using backward stochastic differential equations (BSDEs) and to the best knowledge of the authors this is the first 
time BSDEs have been used to study convergence of the policy iteration algorithm.
The assumptions required for this are effectively Lipschitz dependence in the drift, diffusion, instantaneous payoff, and terminal payoff functions and independence of the diffusion matrix on the control; see~\eqref{sde}.
The stability results show that the policy iteration remains stable even if the linear PDE is solved
only approximately and even if the maximization is step performed approximately.  
Moreover they allow one to devise computationally efficient algorithms as they 
show that in the initial steps it is sufficient to solve the linear PDE with very 
low accuracy, and a highly accurate PDE solver is only required for the final few iterations of the algorithms.

The paper is organized as follows. In Section~\ref{sec main} we introduce all the assumptions and notation used throughout the paper. 
In Sections~\ref{sec conv gia} and~\ref{sec conv pia} 
we state and prove the results concerning convergence of the gradient iteration algorithm and the policy improvement algorithm, respectively.
Section~\ref{sec policy improvement} justifies the name ``policy improvement algorithm'' in that it shows that the value functions increase monotonically 
with iterations and it also shows that the algorithm converges under weaker assumptions than those required for obtaining the rate.
Sections~\ref{sec stability lin PDE} and~\ref{sec stability max} prove 
the stability of the algorithms. In Section~\ref{sec example} we present an example that fits the setting of this paper.
Finally, in Appendix~\ref{sec BSDEs}, we collect several known results 
from the theory of BSDEs that are essential for the proofs.

\begin{algorithm}[H]
\caption{Policy improvement algorithm.}
\label{alg pia}
\begin{algorithmic}
\STATE{Initialization: make a guess of the control $a^0 = a^0(t,x)$.}
\WHILE{difference between $v^{n+1}$ and $v^n$ is large}
\STATE{Given a control $a^n = a^n(t,x)$ solve the {\em linear} PDE
\begin{equation}
\label{eq policy iteration pde}
\begin{split}
\partial_t v^n + \frac{1}{2}\text{tr}(\sigma \sigma^\top D^2_x v^n)+b^{a^n} D_x v^n + f^{a^n} & = 0\text{ on }[0,T)\times\R^d\,,\\
v^n (T,\cdot) & = g\,\,\text{ on } x\in\R^d\,.
\end{split}
\end{equation}
}
\STATE{Update the control
\begin{equation}
\label{eq def anplus1}
a^{n+1}(t,x) = \arg \max_{a\in A}\left[\left(b^a D_x v^n+f^a\right)(t,x)\right]\,.	
\end{equation}
}
\ENDWHILE
\RETURN $v^n, a^{n+1}$.
\end{algorithmic}
\end{algorithm}

\begin{algorithm}
\caption{Gradient iteration algorithm.}
\label{alg via}
\begin{algorithmic}
\STATE{Initialization: make a guess of the value function $v^0 = v^0(t,x)$.
%and \textcolor{red}{ define
%\[
%a^1(t,x)=\arg \max_{a\in A}\left(b^aD_x v^0+f^a\right)\,.
%\]}
}
\WHILE{difference between $v^n$ and $v^{n-1}$ is large}
\STATE{Given value function $v^{n-1} = v^{n-1}(t,x)$ update the control 
	\begin{equation}
	\label{eq gia update control}
	a^{n}(t,x) = \arg \max_{a\in A}\left[ \left(b^a D_x v^{n-1}+f^a\right)(t,x)\right]\,.
	\end{equation}
}
\STATE{Solve the {\em linear} PDE
\begin{equation}
\label{eq value iteration pde}
\begin{split}
\partial_t v^n + \frac{1}{2}\text{tr}(\sigma \sigma^\top D^2_x v^n)
+ b^{a^n} D_x v^{n-1} + f^{a^n}  & = 0\text{ on }[0,T)\times\R^d\,,\\
v^n (T,\cdot) & = g\,\,\text{ on } x\in\R^d\,.
\end{split}
\end{equation}
}
\ENDWHILE
\RETURN $v^n, a^{n}$.
\end{algorithmic}
\end{algorithm}

%\begin{remark}
We would like to emphasize that Algorithm~\ref{alg pia} and Algorithm~\ref{alg via} are different, although they look rather similar. 
In Algorithm~\ref{alg pia}, $v^n$ is the value function for the Markov control $a^{n}$, since it solves the PDE~\eqref{eq policy iteration pde}.
In Algorithm~\ref{alg via} $v^n$ is not the value function for the Markov control $a^n$. 
This is due to the term $b^{a^n}D_x v^{n-1}$ in the linear PDE~\eqref{eq value iteration pde}. 
%However, in Algorithm~\ref{alg via}, $v^{n}$ solves a different PDE~\eqref{eq value iteration pde}: indeed here we have the term $b^{a^n}D_x v^{n-1}$ instead of $b^{a^n}D_x v^{n}$ which appears in~\eqref{eq policy iteration pde}.
%\end{remark}

\section{Assumptions and Notation}
\label{sec main}
We fix a finite horizon $T\in (0,\infty)$.
We assume that for some $m\in \mathbb N$ we have $A \subseteq R^m$ such that $0\in A$.
This is the space where the control processes $\alpha$ take values. 
We fix a filtered probability space $(\Omega, \mathcal{F}, \F=(\mathcal{F}_t)_{0\le t\le T}, \Prob)$.
Let  $W=(W_t)_{t\in [0,T]}$ be a $d'$-dimensional Wiener martingale on this space. Moreover, we have the following: 
\begin{enumerate}[(i)]
\item 
For $\gamma > 0$ and a predictable process $\phi$ let us define
\[
\|\phi\|_{\mathbb{H}^2_\gamma} := \left(\E\int_{0}^{T}e^{\gamma s}|\phi_s|^2\,ds\right)^{\frac12}\,.
\]
For $\gamma = 0$ we will write $\|\cdot\|_{\mathbb H^2}$.
We will use $\mathbb{H}^2$ to denote the set of all
predictable processes $\phi$ such that $\|\phi\|_{\mathbb H^2} < \infty$. 
Note that the norm $\|\cdot\|_{\mathbb H^2}$ is equivalent to the norm $\|\cdot\|_{\mathbb H^2_\gamma}$ for any $\gamma \geq 0$.
\item Let $\mathcal{S}^{2}$ be the set of real valued $\mathbb F$-adapted
continuous processes $\phi$ on $[0,T]$ such that 
	\[
	\|\phi\|_{\mathcal{S}^{2}}:=\E\left[\sup_{0\le r\le T}|\phi_r|^2\right]<\infty\,.
	\]
\item For adapted processes $\phi$ such that $\int_0^t |\phi_s|^2\, ds < \infty$ almost surely we will define
\[
(\phi \bullet W)_t := \int_0^t \phi_s \, dW_s\,.
\]
\item For any continuous local martingale $M$ let with $(\langle M \rangle_t)_{t\in [0,T]}$ denote the quadratic variation process and moreover let  
\[
\mathcal E(M)_t := \exp\left(M_t - \frac12 \langle M \rangle_t\right)\,.
\]
\end{enumerate}

We are given measurable functions
\[
b:A\times [0,T]\times\R^d\to \R^d\text{ and } \sigma:[0,T]\times\R^d\rightarrow\R^{d\times d'}\,.
\]
The state of the system is governed by the controlled SDE~\eqref{sde}.

\begin{assumption}
\label{assumption controlled SDE}
The functions $b$ and $\sigma$ are continuous in $t$.
There exists $K\ge 0$ 
and such that $\forall x,y\in\R^d,\forall a\in A,\forall t\in[0,T]$,
\begin{equation}\label{lipchitz}
|b^a(t,x)-b^a(t,y)|+|\sigma(t,x)-\sigma(t,y)|\le K|x-y|
\end{equation}
and
\begin{equation}\label{linear growth}
|\sigma(t,x)|\le K(1+|x|)\,,\,\,\,|b^a(t,x)|\le K(1+|x|+|a|)\,.
\end{equation}
\end{assumption}
\noindent Under Assumption~\ref{assumption controlled SDE} we know that for any $(t,x)\in[0,T]\times \mathbb R^d$ 
and for any progressively measurable $A$-valued control process $\alpha=(\alpha_s)$ 
there is a unique strong solution to~\eqref{sde} which we denote $(X^{t,x,\alpha}_s)_{s\in[t,T]}$.
Let
\[
f:A\times [0,T]\times\R^d \to \R\text{  and  } g:\R^d\rightarrow\R
\]
be two given measurable functions. Let us assume the following for the running gain function $f$ and the terminal gain function $g$ appearing in~\eqref{problem}.
\begin{assumption}
\label{assumption rewards}
There is a constant $K\ge0$ such that  $\forall x,y\in\R^d,\forall a\in A,\forall t\in[0,T]$
\begin{equation}\label{loc_Lip}
|g(x)-g(y)|+|f^a(t,x)-f^a(t,y)|\le K|x-y|
\end{equation}
and
\begin{equation}\label{bound_f}
|f^a(t,x)|\le K(1+|x|+|a|)\,,\,\,\,
|g(x)|\le K\,.
\end{equation}
%\todo{B: Should we assume $f,g$ are continuous in $t$ or $f,g$ are measurable in $t$? 

%B \& D: Assume continuous so we can use Proposition 6.3.3 in Pham.}
\end{assumption}
\noindent Under Assumption~\ref{assumption rewards} the gain functional $J$ given
by~\eqref{problem} and the value function $v$ given by~\eqref{cp} are well defined.
Moreover, the value function $v$ satisfies the Bellman equation (with derivatives existing almost everywhere, see Krylov~\cite[Chapter 4]{krylov controlled}, or in the sense of viscosity solutions, see, e.g., Pham~\cite{pham book} or Fleming and Soner~\cite{fleming soner}) 
\begin{equation}\label{HJB}
\begin{split}
\partial_tv+\frac{1}{2}\text{tr}(\sigma\sigma^\top D^2_xv)+\sup_{a\in A}\left(b^aD_xv+f^a\right)&=0\text{ on }[0,T)\times\R^n,\\
v(T,x)&=g(x)\,\,\text{ on } x\in\R^d\,.
\end{split}
\end{equation}

Let us now state the additional assumptions required for our convergence result.
\begin{assumption}
\label{ass convexity}
Let us define for each fixed $(t,x,z) \in [0,T] \times \mathbb R^d \times \mathbb R^d$  the function
\begin{equation}
\label{control function}
a(t,x,z) := \arg\max_{a\in A} \left(b^a(t,x)\sigma^{-1}(t,x)z + f^a(t,x)\right).
\end{equation}
We assume that the function $a(t,x,z)$ is measurable.
\end{assumption}

If the function $a\mapsto \left(b^a(t,x)\sigma^{-1}(t,x)z + f^a(t,x)\right)$ is convex for each fixed $(t,x,z)$, which is in $ [0,T] \times \mathbb R^d \times \mathbb R^d$, one can immediately see that Assumption \ref{ass convexity} holds. 
More generally, this assumption can be verified using an appropriate measurable selection theorem. 
For example, if $A$ is compact, then~\cite[Proposition D.5]{Hernandez1996} shows that an appropriate measurable selection exists. 
If $A$ is not compact but $f$ is bounded, then~\cite[Proposition D.6]{Hernandez1996} gives the same conclusion (using also that $z=D_x v(t,x)$ and Remark~\ref{rem grad bdd}).

\begin{assumption}
\label{ass extra for pia}
There are constants $K, \theta \geq 0$ such that the following hold:
\begin{enumerate}
\item (On the drift)
For all $t\in[0,T]$, $x\in\R^d$, $a,a'\in A$,
\begin{equation} \label{ass b lip in a}
|b^a(t,x) - b^{a'}(t,x)| \leq \sqrt{\theta}|a-a'| 
\end{equation}
and for all $t\in[0,T]$, $x\in\R^d$, $a\in A$ we have 
\begin{equation} \label{change_of_measure}
|(b^a\sigma^{-1})(t,x)|<K\,. 
\end{equation}
\item (On the control function)
For all $t\in[0,T]$, $x,x',z,z' \in\R^d$, $a,a'\in A$ we have that 
\begin{equation*}
|a(t,x,z)-a(t,x,z')| \leq \sqrt\theta|z-z'|\,,
\end{equation*}
\begin{equation*}
|a(t,x,z)-a(t,x',z)| \leq K|x-x'|\,\,\,\,\text{and}\,\,\,\, |a(t,0,0)| \leq K\,.
\end{equation*}
\item (On the running reward) 
\begin{equation*}
|f^{a}(t,x)-f^{a'}(t,x)| \leq \sqrt{\theta}|a-a'| \,\,\,\, \forall t\in[0,T], \forall x\in\R^d, \forall a,a'\in A\,.
\end{equation*}
\end{enumerate}
\end{assumption}
\begin{remark}
Under Assumptions~\ref{assumption rewards} and~\ref{ass extra for pia} we have that
for all $t\in[0,T]$, $x,z,z'\in \R^d$ the following hold:
\[
|f^{a(t,x,z)}(t,x)-f^{a(t,x,z')}(t,x)|\le \theta |z-z'|
\]
and
\[
|f^{a(t,x,0)}(t,x)| \leq (K+K^2)(1+|x|)\,.
\]
\end{remark}

Under Assumptions~\ref{assumption controlled SDE}, \ref{assumption rewards},
\ref{ass convexity}, and~\ref{ass extra for pia}  there is an optimal control process and this 
fact will be used to prove the main results.
\begin{remark}
\label{remark alpha star}
Due to results of Krylov~\cite{krylov controlled} we know that~\eqref{HJB general} has a unique solution and moreover
the map $[0,T] \times \mathbb R^d \ni (t,x) \mapsto D_x v(t,x) \in \mathbb R^d$ is bounded; 
see~\cite[Chapter 4, section 1, Theorem 1]{krylov controlled}.
Hence, by Assumptions~\ref{ass convexity} and~\ref{ass extra for pia} 
we know that $(t,x) \mapsto a(t,x,\sigma(t,x)D_x v(t,x))$ is jointly 
measurable and Lipschitz in $x$.
Thus, for each $(t,x)\in [0,T] \times \mathbb R^d$, the SDE
\[
dX_s = b^{a(s,X_s,\sigma(s,X_s)D_x v(s,X_s))}(s,X_s)\,ds + \sigma(s,X_s)\,dW_s\,,\,\,\,s\in[t,T]\,, X_t = x
\]
has a unique solution $X^{t,x}$.
Then by the verification theorem, the process $\alpha^\ast_s := a(s,X_s,\sigma(s,X_s)D_x v(s,X_s))$ is the optimal control process for~\eqref{cp}.
\end{remark}

All the proofs will be completed in a new measure $\hat{\mathbb P}$ on $(\Omega, \mathcal F)$
given in the
following lemma.
We will use $\hat{\mathbb E}$ to denote the expectation under the measure $\hat{\mathbb P}$.

\begin{lemma}
\label{lemma P hat}
Let Assumptions~\ref{assumption controlled SDE} and~\ref{assumption rewards}  
together with~\eqref{change_of_measure} hold.
Let $(t,x) \in [0,T]\times \mathbb R^d$.
Let $X = X^{t,x,\alpha^\ast}$ be the solution to the SDE~\eqref{sde} started from $(t,x)$ and controlled by the optimal control process $\alpha^\ast$.
Then $d\hat{\mathbb P} := \mathcal E((b^{\alpha^\ast}\sigma^{-1})(\cdot,X) \bullet W)_T \, d\mathbb P$ 
is a probability measure equivalent to $\mathbb P$ and the process 
\[
\widehat{W}_s := W_s + \int_0^s b^{\alpha^\ast_r}(r,X_r)\sigma^{-1}(r,X_r) \,dr
\]
is a $\hat{\mathbb P}$-Wiener process.
\end{lemma}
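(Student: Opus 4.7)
The plan is to reduce the statement to a direct application of Girsanov's theorem, for which the only real content is verifying Novikov's condition (or equivalently, that the relevant Doléans-Dade exponential is a true martingale).

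First I would introduce the shorthand $\psi_s := (b^{\alpha^\ast_s}\sigma^{-1})(s, X_s)$ for $s \in [0,T]$ and check that $\psi$ is progressively measurable. This is immediate: the process $X = X^{t,x,\alpha^\ast}$ is adapted and continuous, $\alpha^\ast$ is progressively measurable by assumption, and the maps $b$ and $\sigma^{-1}$ are measurable in all their arguments, so the composition is progressively measurable. (For $s < t$ one can simply extend $\psi_s$ by zero without affecting the stochastic integral, since only the integral over $[t,T]$ matters, but this is a cosmetic point.)

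Next I would invoke the key boundedness assumption \eqref{change_of_measure}: for every $a\in A$, $t\in[0,T]$, $x\in\mathbb R^d$ we have $|(b^a\sigma^{-1})(t,x)| < K$. In particular $|\psi_s| \le K$ uniformly in $s$ and $\omega$. Consequently
\[
\mathbb E\!\left[\exp\!\left(\tfrac{1}{2}\int_0^T |\psi_s|^2\,ds\right)\right] \le \exp\!\left(\tfrac{1}{2}K^2 T\right) < \infty,
\]
so Novikov's condition is trivially satisfied. This immediately implies that the stochastic exponential $\mathcal E(\psi \bullet W) = \bigl(\mathcal E(\psi \bullet W)_s\bigr)_{s\in[0,T]}$ is a genuine (uniformly integrable) $\mathbb P$-martingale with $\mathbb E\bigl[\mathcal E(\psi \bullet W)_T\bigr] = 1$. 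Hence $d\hat{\mathbb P} := \mathcal E(\psi \bullet W)_T\, d\mathbb P$ defines a probability measure on $(\Omega,\mathcal F)$, and since the density is strictly positive a.s., $\hat{\mathbb P}$ and $\mathbb P$ are equivalent.

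Finally I would appeal directly to Girsanov's theorem: under the new measure $\hat{\mathbb P}$, the process
\[
\widehat W_s := W_s - \int_0^s (-\psi_r)\,dr = W_s + \int_0^s b^{\alpha^\ast_r}(r,X_r)\sigma^{-1}(r,X_r)\,dr
\]
is a continuous local martingale with quadratic variation $s \cdot I_{d'}$, hence by Lévy's characterization it is a $d'$-dimensional $\hat{\mathbb P}$-Brownian motion. The only conceptual step beyond citing Girsanov is the verification of Novikov, and this is made trivial by the uniform bound \eqref{change_of_measure}; I do not expect any genuine obstacle here.
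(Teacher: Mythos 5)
Your proposal is correct and takes exactly the route the paper intends: the paper's proof is the single sentence ``this is an immediate consequence of \eqref{change_of_measure} and Girsanov's theorem,'' and you have simply filled in the details (progressive measurability of $(b^{\alpha^\ast}\sigma^{-1})(\cdot,X)$, Novikov's condition via the uniform bound $K$, equivalence of the measures from strict positivity of the density, and L\'evy's characterization). The only caveat, which you inherit from the statement itself rather than introduce, is the sign convention: with density $\mathcal E(\psi\bullet W)_T$ the standard Girsanov theorem makes $W_s-\int_0^s\psi_r\,dr$ the new Wiener process, so for $\widehat W_s=W_s+\int_0^s\psi_r\,dr$ (the choice that removes the drift from $X$) the density should be $\mathcal E(-\psi\bullet W)_T$; this is immaterial to the validity of the argument since $-\psi$ is equally bounded.
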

\begin{proof}
This is an immediate consequence of~\eqref{change_of_measure} and Girsanov's theorem.	
\end{proof}
\begin{remark}
\label{rem grad bdd}
From Krylov~\cite[Chapter 4, section 1, Theorem 1]{krylov controlled} 
we get that there is a constant $C>0$ such that for all $(t,x)\in [0,T)\times \mathbb R^d$ we have that $|D_x v(t,x)| \leq C$.
\end{remark}

\section{Convergence of gradient iteration algorithm}
\label{sec conv gia}

The following theorem gives the convergence result for Algorithm~\ref{alg via}.

\begin{theorem}\label{main theorem alg 1}
Let Assumptions~\ref{assumption controlled SDE}, \ref{assumption rewards},
\ref{ass convexity}, and~\ref{ass extra for pia} hold. 
Let $v$ be the solution to~\eqref{HJB general} and let $(v^n)_{n\in \mathbb N}$ be the approximation sequence given by Algorithm~\ref{alg via}.
Then there is $q\in (0,1)$ depending only on $ K,\theta,T$ and the initial guess $v^0=v^0(t,x)$ such that for all
$(t,x)\in [0,T]\times \mathbb R^d$ there exists $C=C(t,x)$ such that
\begin{equation*}
\begin{split}
|v(t,x)-v^n(t,x)|^2\le C(t,x) q^n\,.
\end{split}
\end{equation*}
\end{theorem}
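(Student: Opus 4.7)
The proof follows a BSDE-stability route. Work under the equivalent measure $\hat{\mathbb P}$ of Lemma~\ref{lemma P hat} so that $X = X^{t,x}$ becomes driftless: $dX_s = \sigma(s, X_s)\,d\widehat W_s$ on $[t, T]$. Introduce the Hamiltonian
\[
F(s, z) := \sup_{a \in A}\left\{b^a(s, X_s)\sigma^{-1}(s, X_s) z + f^a(s, X_s)\right\}.
\]
A standard max-of-affine argument combined with the bound $|b^a \sigma^{-1}| < K$ from~\eqref{change_of_measure} yields $|F(s, z) - F(s, z')| \leq K|z - z'|$. (The constant $\theta$ enters only if one wants sharper estimates, e.g.\ for the analysis of Algorithm~\ref{alg pia}.)

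Applying Itô's formula to $v(s, X_s)$ and invoking the HJB~\eqref{HJB}, the pair $(Y_s, Z_s)$ with $Y_s := v(s, X_s)$ and $Z_s$ the natural BSDE variable satisfying $\sigma^{-1}(s,X_s) Z_s = D_x v(s, X_s)$, solves
\[
-dY_s = F(s, Z_s)\,ds - Z_s\,d\widehat W_s,\quad Y_T = g(X_T).
\]
Applying Itô to $v^n(s, X_s)$ and using the linear PDE~\eqref{eq value iteration pde}, together with the identification $a^n(s, X_s) = a(s, X_s, Z^{n-1}_s)$ encoded by the update rule~\eqref{eq gia update control} via Assumption~\ref{ass convexity}, the source term in the PDE becomes exactly $F(s, Z^{n-1}_s)$, so that
\[
-dY^n_s = F(s, Z^{n-1}_s)\,ds - Z^n_s\,d\widehat W_s,\quad Y^n_T = g(X_T).
\]
The crucial observation is that the driver of the $n$th BSDE depends on $Z^{n-1}$ (not $Z^n$), yielding a one-step recursion.

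Set $\Delta Y^n := Y - Y^n$ and $\Delta Z^n := Z - Z^n$. Applying Itô to $e^{\gamma s}|\Delta Y^n_s|^2$ on $[t, T]$, taking $\hat{\mathbb E}$, and combining the Lipschitz bound on $F$ with Young's inequality $2ab \leq \gamma a^2 + b^2/\gamma$ gives
\[
e^{\gamma t}|\Delta Y^n_t|^2 + \hat{\mathbb E}\int_t^T e^{\gamma s}|\Delta Z^n_s|^2\,ds \leq \frac{K^2}{\gamma}\,\hat{\mathbb E}\int_t^T e^{\gamma s}|\Delta Z^{n-1}_s|^2\,ds.
\]
Choosing $\gamma > K^2$ produces a contraction with factor $q := K^2/\gamma \in (0, 1)$. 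Iterating yields $\|\Delta Z^n\|^2_{\mathbb H^2_\gamma([t,T])} \leq q^n \|\Delta Z^0\|^2_{\mathbb H^2_\gamma([t,T])}$; since $X_t = x$ is deterministic, $\Delta Y^n_t = v(t, x) - v^n(t, x)$, giving the claimed bound with $C(t, x) := e^{-\gamma t}\|\Delta Z^0\|^2_{\mathbb H^2_\gamma([t,T])}$.

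The main technical hurdle is ensuring the iterates $v^n$ have enough regularity to apply Itô's formula and to secure $Z^n \in \mathbb{H}^2$; this should follow from standard parabolic regularity for the linear PDE~\eqref{eq value iteration pde}, together with a regularity assumption on $v^0$ (bounded $D_x v^0$ suffices) that also makes $\|\Delta Z^0\|_{\mathbb H^2_\gamma}$ finite. Boundedness of $|D_x v|$ itself is given by Remark~\ref{rem grad bdd}.
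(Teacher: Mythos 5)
Your argument follows essentially the same route as the paper: represent the value function and the gradient-iteration iterates as BSDEs driven by the optimally controlled diffusion under the Girsanov measure $\hat{\mathbb P}$ of Lemma~\ref{lemma P hat}, observe that the $n$th driver is evaluated at $Z^{n-1}$ only, and run the standard It\^o--Young contraction on $e^{\gamma s}|\delta Y_s|^2$ (the paper's Lemma~\ref{lem classic bsde iteration}), so the structure, the key lemma, and the final form of $C(t,x)$ all coincide. The one step you do differently is the Lipschitz bound on the Hamiltonian: you note that $z\mapsto F(s,z)$ is a supremum of affine functions with slopes $b^a\sigma^{-1}$ uniformly bounded by $K$ via~\eqref{change_of_measure}, hence globally $K$-Lipschitz. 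The paper instead estimates $|F_s(Z_s)-F_s(Z^{n-1}_s)|$ by splitting through the argmax selector $a(t,x,z)$, which costs the larger constant $C\theta+K+\theta$ and uses the gradient bound of Remark~\ref{rem grad bdd} together with Assumption~\ref{ass extra for pia}; your sup-of-affine observation is cleaner and yields a smaller $q$, though it is specific to Algorithm~\ref{alg via} (for Algorithm~\ref{alg pia} the driver $F_s(Z^{n-1}_s,Z^n_s)$ is no longer a supremum when its two arguments differ, so the paper's splitting is genuinely needed there). The only place you are less careful than the paper is the identification of $(v(s,X_s),\sigma D_xv(s,X_s))$ as a solution of the limiting BSDE: you apply It\^o's formula directly to $v$, which presupposes $C^{1,2}$ regularity that the standing assumptions do not deliver, whereas the paper constructs the quadratic BSDE~\eqref{eq: BSDE for value function}, obtains its well-posedness from Theorem~\ref{thr well-defined qBSDE from Adrien's paper}, and identifies its $Y$-component with $v$ through the viscosity-solution comparison principle. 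This is a point of rigor rather than a structural gap, and you flag the analogous regularity issues for the iterates yourself.
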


The main idea of the proof consists of noticing that Algorithm~\ref{alg via} 
can be seen as an iteration on the level of BSDEs.
Using Lemma~\ref{lem classic bsde iteration} we see that on the level of 
BSDEs this iteration is contractive. 
Finally we need to use known results on the connection between BSDEs and
solutions to the HJB equation.

\begin{proof}[Proof of Theorem~\ref{main theorem alg 1}]

We prove the main result in several steps. 
First, we show how to rewrite the gradient iteration algorithm as
an iteration on the level of BSDEs. 
On the $n$th step of the algorithm we need to solve the linear PDE with Lipschitz continuous coefficients~\eqref{eq value iteration pde}. 
Let $v^n$ be the solution to~\eqref{eq value iteration pde} and recall that
\[
a^{n}(t,x) = \arg \max_{a\in A}\left((b^aD_x v^{n-1})(t,x)+f^a(t,x)\right) = a(t,x,\sigma(t,x) D_x v^{n-1}(t,x))\,.
\]
Since we are working with the linear
PDE with Lipschitz continuous coefficients, we have $v^n$ in 
%\todo{I couldnt find the reference for that} 
$C^{1,2}([0,T)\times \mathbb R^d)$. 
Let $X = X^{t,x,\alpha^\ast}$ be the solution to the SDE~\eqref{sde} started from $(t,x)$ and controlled by the optimal control process $\alpha^\ast$; see Remark~\ref{remark alpha star}.
From It\^o's formula we then get that
\begin{equation*}
\begin{split}
dv^n(s,X_s) = & \left[\partial_t v^n(s,X_s) + \frac{1}{2}\text{tr}(\sigma\sigma^\top D^2_xv^n)(s,X_s) + (b^{\alpha^\ast_s} D_x v^n)(s,X_s) \right]\,ds \\ 
& + (D_x v^n \sigma ) (s,X_s) \, dW_s\\
= & \left[(b^{\alpha^\ast_s} D_x v^n)(s,X_s) - (b^{a^{n}}D_xv^{n-1})(s,X_s)-f^{a^{n}}(s,X_s)\right]\,ds\\
& + (D_x v^n \sigma )(s,X_s) \, dW_s	\,.
\end{split}
\end{equation*}
Let 
\[
F_s(z) := b^{a(s,X_s,z)}(s,X_s)\sigma^{-1}(s,X_s) z + f^{a(s,X_s, z)}(s,X_s)
\]
and
\begin{equation}
\label{eq def of Y}
Y^n_t:=v^n(t,X_t)\,,\,\,\,\,Z^n_t:=\sigma(t,X_t)D_xv^n(t,X_t)\,,\,\,\,\xi:=g(X_T).
\end{equation}
Then we may write 
\begin{equation}
\label{eq bsde val iter in P}
Y_t^n=\xi-\int_{t}^{T}\left[(b^{\alpha^\ast_s}\sigma^{-1})(s,X_s) Z^n_s - F_s( Z^{n-1}_s)\right]ds-\int_{t}^{T}Z^n_s\,dW_s\,.	
\end{equation}
Let $\hat{\mathbb P}$ and $\widehat{W}$ be given by Lemma~\ref{lemma P hat}.
Hence~\eqref{eq bsde val iter in P} becomes
\begin{equation}
\label{eq bsde val iter in Phat}
Y_t^n=\xi + \int_{t}^{T} F_s(Z^{n-1}_s)\,ds-\int_{t}^{T}Z^n_s\,d\widehat W_s\,.
\end{equation}
Consider now the following BSDE:
\begin{equation}
\begin{split}\label{eq: BSDE for value function}
Y_{t'}^{t,x}&=\xi + \int_{t'}^{T} b^{a(s, X_s^{t,x},Z_s^{t,x})}(s,X_s^{t,x})\sigma^{-1}(s,X_s^{t,x})Z_s^{t,x}+f^{a(s, X_s^{t,x},Z_s^{t,x})}(s, X_s^{t,x})\,ds\\
&\qquad- \int_{t'}^{T}Z_s^{t,x}\,d\widehat W_s,\,\,\,t'\in[t,T]\,,
\end{split}
\end{equation}
where the superscript means that the forward process started from $(t,x)$.
Hence, we can define 
\begin{equation}
\label{eq def of w}
w(t,x):=Y^{t,x}_t\,\,\text{ and }\,\,\sigma(t,x)D_xw(t,x):=Z^{t,x}_t.	
\end{equation}
Therefore by \eqref{control function} we have
\begin{equation*}
\begin{split}
b^{a(t, x,z)}(t,x)\sigma^{-1}(t,x)z+f^{a(t, x,z)}(t, x)=\max_{a\in A}\left(b^{a}(t,x)\sigma^{-1}(t,x)z+f^{a}(t,x)\right).
\end{split}
\end{equation*}
Thus, by Pham~\cite[Theorem 6.3.3]{pham book}, the function $w=w(t,x)$ solves the HJB equation~\eqref{HJB general}. 
Notice that here is the crucial point where the fact that we use the optimal control $\alpha^*$ plays a role. 
Indeed with other control processes we couldn't claim that $w$ solves the HJB equation.
By uniqueness of the viscosity solution to the HJB equation (see the strong comparison principle from~\cite[Theorem 4.4.5]{pham book}), we can conclude that $w=v$ and therefore $w$ is the value function of our stochastic control problem. Therefore, the BSDE~\eqref{eq: BSDE for value function} is the BSDE corresponding to the value function. Notice that~\eqref{eq: BSDE for value function} is a quadratic BSDE, since in the generator we have a product of two Lipschitz functions which depend on $Z$. The existence of the solution to~\eqref{eq: BSDE for value function} under our assumptions can be obtained by applying Theorem~\ref{thr well-defined qBSDE from Adrien's paper} in the case when the terminal cost is bounded for our stochastic control problem.

Using Remark~\ref{rem grad bdd}, the fact that $\sigma^{-1}(s,X_s) Z_s = D_x v(s,X_s)$, and Assumption~\ref{ass extra for pia}, for all $s\in[t,T]$ we get that
\begin{equation}
\label{eq F lip val it}
\begin{split}
|F_s(Z_s)& - F_s(Z^{n-1}_s)|\\
%& = |b^{a(s,X_s,Z_s)}(s,X_s)\sigma^{-1}(s,X_s) Z_s + f^{a(s,X_s, Z_s)}(s,X_s)\\
%&-b^{a(s,X_s, Z^{n-1}_s)}(s,X_s)\sigma^{-1}(s,X_s) Z^{n-1}_s - f^{a(s,X_s, Z^{n-1}_s)}(s,X_s)|\\
%&\le
%|b^{a(s,X_s, Z_s)}(s,X_s)\sigma^{-1}(s,X_s) Z_s-b^{a(s,X_s,Z^{n-1}_s )}(s,X_s)\sigma^{-1}(s,X_s) Z_s|\\
%&+|b^{a(s,X_s,Z^{n-1}_s )}(s,X_s)\sigma^{-1}(s,X_s) Z_s-b^{a(s,X_s, Z^{n-1}_s)}(s,X_s)\sigma^{-1}(s,X_s) Z^{n-1}_s|\\
%&+|f^{a(s,X_s, Z_s)}(s,X_s)-f^{a(s,X_s, Z^{n-1}_s)}(s,X_s)|\\
&\le
|\sigma^{-1}(s,X_s) Z_s||b^{a(s,X_s, Z_s)}(s,X_s)-b^{a(s,X_s,Z^{n-1}_s )}(s,X_s)|\\
& \qquad +|b^{a(s,X_s,Z^{n-1}_s )}(s,X_s)\sigma^{-1}(s,X_s)||Z_s-Z^{n-1}_s|+\theta|Z_s-Z^{n-1}_s|\\
&\le
(C\theta+K+\theta)|Z_s-Z^{n-1}_s|\,.
\end{split}
\end{equation}

Moreover, recalling $\xi=g(X_T)$, we get that $\xi\in L^2(\Omega, \mathcal F_T, \hat{\mathbb P})$ from the higher moment estimates for the solution of the SDE and from the Lipschitz property of $g$. 
Similarly $F_s(0)\in\hat{\mathbb H}^2$ by Assumption~\ref{ass extra for pia}. 
We may thus apply Lemma~\ref{lem classic bsde iteration} and hence, 
due to~\eqref{eq bsde val iter in Phat} and~\eqref{eq F lip val it}, we have $q \in (0,1)$ and $\gamma \geq 0$ such that for all $t\in [0,T]$
\begin{equation}
\label{eq conclusion of contr lem grad}
e^{\gamma t}\hat{\mathbb E} |Y_t - Y^n_t|^2 + \|Z - Z^n\|_{\hat{\mathbb H}^2_\gamma}^2 \leq q \|Z - Z^{n-1}\|_{\hat{\mathbb H}^2_\gamma}^2\,.
\end{equation}
Therefore, from~\eqref{eq def of w} and~\eqref{eq def of Y}, we have $v(t,x)=Y^{t,x}_t$ and $v^n(t,x)=Y^{n,t,x}_t$ and by~\eqref{eq conclusion of contr lem grad} we obtain
\begin{equation*}
\begin{split}
e^{\gamma t}|v(t,x)-v^n(t,x)|^2&\le \hat{\mathbb E}[e^{\gamma t}|Y^{t,x}_t-Y^{n,t,x}_t|^2]+\|Z^{t,x}-Z^{n,t,x}\|^2_{\hat{\mathbb H}^2_{\gamma}}\\
&\le q^n \|Z^{t,x}-Z^{0,t,x}\|^2_{\hat{\mathbb H}^2_{\gamma}}\,.
\end{split}
\end{equation*} 
Hence
\begin{equation*}
\begin{split}
&|v(t,x)-v^n(t,x)|^2\\
&\qquad\le q^n\hat{\mathbb E}\int_t^T e^{\gamma(T-t)}|\sigma(s,X^{t,x,\alpha^\star}_s)|^2|D_xv(s,X^{t,x,\alpha^\star}_s)-D_xv^0(s,X^{t,x,\alpha^\star}_s)|^2\,ds\,. 
\end{split}
\end{equation*}
This finishes the proof.
\end{proof}

\section{Convergence of policy improvement}
\label{sec conv pia}

\begin{theorem}\label{main theorem alg 2}
Let Assumptions~\ref{assumption controlled SDE}, \ref{assumption rewards},
\ref{ass convexity}, and~\ref{ass extra for pia} hold. 
Let $v$ be the solution to~\eqref{HJB general} and let $(v^n)_{n\in \mathbb N}$ be the approximation sequence given by Algorithm~\ref{alg pia}.
Then there is $q\in (0,1)$ depending only on $ K,\theta,T$ and the initial guess $v^0=v^0(t,x)$ such that for all
$(t,x)\in [0,T]\times \mathbb R^d$ there exists $C=C(t,x)$ such that
\begin{equation*}
\begin{split}
|v(t,x)-v^n(t,x)|^2\le C(t,x) q^n\,.
\end{split}
\end{equation*}
\end{theorem}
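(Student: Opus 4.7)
The plan is to mirror the strategy of the proof of Theorem~\ref{main theorem alg 1}: recast Algorithm~\ref{alg pia} as an iteration at the level of BSDEs under the measure $\hat{\mathbb P}$ from Lemma~\ref{lemma P hat}, compare with the BSDE~\eqref{eq: BSDE for value function} associated to $v$, and extract a contraction in a weighted $\hat{\mathbb H}^2_\gamma$ norm. Since $v^n$ solves the \emph{linear} PDE~\eqref{eq policy iteration pde} with Lipschitz coefficients, $v^n\in C^{1,2}([0,T)\times\R^d)$. Applying It\^o's formula to $v^n(s,X_s)$ along $X=X^{t,x,\alpha^\ast}$, substituting~\eqref{eq policy iteration pde} to eliminate $\partial_t v^n+\tfrac12\mathrm{tr}(\sigma\sigma^\top D^2_x v^n)$, and changing measure via Lemma~\ref{lemma P hat} leads to
\begin{equation*}
Y^n_t = \xi + \int_t^T \bigl[b^{a^n}(s,X_s)\sigma^{-1}(s,X_s)Z^n_s + f^{a^n}(s,X_s)\bigr]\,ds - \int_t^T Z^n_s\,d\widehat W_s,
\end{equation*}
where $Y^n_t:=v^n(t,X_t)$, $Z^n_t:=\sigma(t,X_t)D_x v^n(t,X_t)$, and $a^n(s,X_s)=a(s,X_s,Z^{n-1}_s)$. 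The crucial structural difference compared with Algorithm~\ref{alg via} is that here the driver depends \emph{linearly} on the unknown $Z^n_s$ through the term $b^{a^n}\sigma^{-1}Z^n_s$.

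Next I would set $\Delta Y^n := Y - Y^n$, $\Delta Z^n := Z - Z^n$ and, writing $G(z;z'):=b^{a(s,X_s,z')}(s,X_s)\sigma^{-1}(s,X_s)z + f^{a(s,X_s,z')}(s,X_s)$, decompose
\begin{equation*}
G(Z_s;Z_s)-G(Z^n_s;Z^{n-1}_s)
= b^{a(s,X_s,Z^{n-1}_s)}\sigma^{-1}(s,X_s)\,\Delta Z^n_s + R^n_s,
\end{equation*}
where the remainder is
\begin{equation*}
R^n_s:=\bigl[b^{a(s,X_s,Z_s)}-b^{a(s,X_s,Z^{n-1}_s)}\bigr]\sigma^{-1}(s,X_s)Z_s + \bigl[f^{a(s,X_s,Z_s)}-f^{a(s,X_s,Z^{n-1}_s)}\bigr].
\end{equation*}
The linear piece is controlled by $K|\Delta Z^n_s|$ thanks to~\eqref{change_of_measure}. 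For $R^n_s$, using $|\sigma^{-1}(s,X_s)Z_s|=|D_x v(s,X_s)|\le C$ from Remark~\ref{rem grad bdd}, together with the Lipschitz bounds in~\eqref{ass b lip in a}, the Lipschitz bound on $f$ in $a$, and the Lipschitz bound on the selector $a(t,x,\cdot)$ in $z$, I obtain $|R^n_s|\le \theta(C+1)|\Delta Z^{n-1}_s|$. Note that only a bound on $D_xv$ (and not on $D_xv^n$) is needed here, exactly as in the proof of Theorem~\ref{main theorem alg 1}.

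Now I would apply It\^o's formula to $e^{\gamma s}|\Delta Y^n_s|^2$ under $\hat{\mathbb P}$ and use Young's inequality on the cross term $2e^{\gamma s}\Delta Y^n_s\,(b^{a(\cdot,Z^{n-1})}\sigma^{-1}\Delta Z^n_s + R^n_s)$, splitting it into a piece $\le \epsilon_1|\Delta Z^n_s|^2 + \epsilon_1^{-1}K^2|\Delta Y^n_s|^2$ and a piece $\le \epsilon_2|\Delta Z^{n-1}_s|^2 + \epsilon_2^{-1}\theta^2(C+1)^2|\Delta Y^n_s|^2$. Choosing $\epsilon_1\in(0,1)$ so that $1-\epsilon_1>0$, then $\epsilon_2\in(0,1-\epsilon_1)$ to set the ratio $q:=\epsilon_2/(1-\epsilon_1)<1$, and finally $\gamma=\gamma(K,\theta,C,\epsilon_1,\epsilon_2)$ large enough to absorb the $|\Delta Y^n|^2$ terms, I would obtain
\begin{equation*}
e^{\gamma t}\hat{\mathbb E}|\Delta Y^n_t|^2 + \|\Delta Z^n\|_{\hat{\mathbb H}^2_\gamma}^2 \le q\,\|\Delta Z^{n-1}\|_{\hat{\mathbb H}^2_\gamma}^2.
\end{equation*}
Iterating and then translating back through $v(t,x)=Y^{t,x}_t$ and $v^n(t,x)=Y^{n,t,x}_t$ produces the claimed estimate exactly as at the end of Theorem~\ref{main theorem alg 1}.

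The main obstacle, relative to the gradient iteration, is that the $n$-th BSDE in the PIA has its driver depending on the unknown $Z^n_s$ as well as on $Z^{n-1}_s$, so Lemma~\ref{lem classic bsde iteration} does not apply as a black-box contraction; one has to run the weighted energy estimate by hand and carefully balance, via $\gamma$ and Young's inequality, the Lipschitz-in-$\Delta Z^n$ term (absorbed on the left) against the forcing-in-$\Delta Z^{n-1}$ term (which provides the contraction factor).
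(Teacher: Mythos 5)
Your proposal is correct and follows essentially the same route as the paper: the same BSDE reformulation under $\hat{\mathbb P}$, the same Lipschitz decomposition of the driver difference into a $K|\Delta Z^n|$ part (absorbed on the left) and a $\theta(C+1)|\Delta Z^{n-1}|$ part (giving the contraction). The only difference is that you run the weighted energy estimate by hand, whereas the paper packages exactly this two-argument contraction as Lemma~\ref{policy convergence estimate} and invokes it as a black box.
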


The proof of Theorem~\ref{main theorem alg 2} is similar to that of
Theorem~\ref{main theorem alg 1} except that the iteration on the level of
BSDEs is nonstandard.

\begin{proof}[Proof of Theorem~\ref{main theorem alg 2}]
Let $v^n$ be the solution to~\eqref{eq policy iteration pde} and recall that
\[
a^{n}(t,x) = \arg \max_{a\in A}\left((b^a D_x v^{n-1})(t,x)+f^a(t,x)\right) = a(t,x,\sigma(t,x)D_x v^{n-1}(t,x)) \,.
\]
As before, let $X = X^{t,x,\alpha^\ast}$ be the solution to the SDE~\eqref{sde} started from $(t,x)$ and controlled by the optimal control process $\alpha^\ast$; see Remark~\ref{remark alpha star}.
By It\^o's formula
\begin{equation*}
\begin{split}
dv^n(s,X_s) = & \left[\partial_t v^n(s,X_s) + \frac12\text{tr}(\sigma\sigma^\top D^2_x v^n)(s,X_s) + (b^{\alpha^\ast_s} D_x v^n)(s,X_s) \right]\,ds \\
& + (D_x v^n\sigma) (s,X_s) \, dW_s\\
= & \big[(b^{\alpha^\ast_s} D_x v^n)(s,X_s)  - (b^{a^n}D_xv^n)(s,X_s)
- f^{a^n}(s, X_s) \big]\,ds\\
& + (D_x v^n\sigma)(s,X_s) \, dW_s	\,.
\end{split}
\end{equation*}
Let 
\[
F_s(z,Z):=b^{a(s,X_s,z)}(s,X_s)\sigma^{-1}(s,X_s)Z+f^{a(s,X_s,z)}(s,X_s)\,.
\]
Recalling that the control $\alpha^\ast$ and the associated diffusion $X$ are fixed
we can write
\begin{equation}
\label{eq pia bsde iter in P2}
Y_t^n=\xi-\int_{t}^{T}\left[(b^{\alpha^\ast_s}\sigma^{-1})(s,X_s)Z^n_s-F_s(Z_s^{n-1},Z^n_s)\right]ds-\int_{t}^{T}Z^n_sdW_s.
\end{equation}
Let $\hat{\mathbb P}$ and $\widehat{W}$ be given by Lemma~\ref{lemma P hat}.
Then~\eqref{eq pia bsde iter in P2} becomes
\begin{equation}
\label{eq pia bsde iter in P hat2}
Y_t^n=\xi+\int_{t}^{T}F_s(Z_s^{n-1},Z^n_s)ds-\int_{t}^{T}Z^n_sd\widehat{W}_s\,.
\end{equation} 
Similarly as in Theorem~\ref{main theorem alg 1}, consider the BSDE
\begin{equation}
\begin{split}\label{eq: BSDE for value function policy iteration}
Y_{t'}^{t,x}&=\xi + \int_{t'}^{T} b^{a(s, X_s^{t,x},Z_s^{t,x})}(s,X_s^{t,x})\sigma^{-1}(s,X_s^{t,x})Z_s^{t,x}+f^{a(s, X_s^{t,x},Z_s^{t,x})}(s, X_s^{t,x})\,ds\\
&\qquad- \int_{t'}^{T}Z_s^{t,x}\,d\widehat W_s,\,\,\,t'\in[t,T]\,.
\end{split}
\end{equation}
In same way we can show that $v(t,x)=Y^{t,x}_t$ is the value function of our stochastic control problem.
As before, from Krylov~\cite[Chapter 4, section 1, Theorem 1]{krylov controlled}, 
we get that there is a constant $C>0$ such that for all $(t,x)\in [0,T)\times \mathbb R^d$ we have that $|D_x v(t,x)| \leq C$.
Moreover, as before, using Remark~\ref{rem grad bdd}, the fact that $\sigma^{-1}(s,X_s) Z_s = D_x v(s,X_s)$, and Assumption~\ref{ass extra for pia}, for all $s\in[t,T]$ we get that
\begin{equation}\label{lipschitz F policy iteration}
\begin{split}
|F_s(Z_s,Z_s) & - F_s(Z^{n-1}_s,Z^{n}_s)|  \\
%&\le |b^{a(s,X_s,Z_s)}(s,X_s)\sigma^{-1}(s,X_s)Z_s-b^{a(s,X_s,Z^{n-1}_s)}(s,X_s)\sigma^{-1}(s,X_s)Z^{n}_s|\\
%&+|f^{a(s,X_s,Z_s)}(s,X_s)-f^{a(s,X_s,Z^{n-1}_s)}(s,X_s)|\\
%&\le|b^{a(s,X_s,Z_s)}(s,X_s)\sigma^{-1}(s,X_s)Z_s-b^{a(s,X_s,Z^{n-1}_s)}(s,X_s)\sigma^{-1}(s,X_s)Z_s|\\
%&+|b^{a(s,X_s,Z^{n-1}_s)}(s,X_s)\sigma^{-1}(s,X_s)Z_s-b^{a(s,X_s,Z^{n-1}_s)}(s,X_s)\sigma^{-1}(s,X_s)Z^{n}_s|\\
%&+\theta|Z_s-Z^{n-1}_s|
& \le\theta|\sigma^{-1}(s,X_s)Z_s||Z_s-Z^{n-1}_s|\\
&\qquad +|b^{a(s,X_s,Z^{n-1}_s)}(s,X_s)\sigma^{-1}(s,X_s)||Z_s-Z^n_s|+\theta|Z_s-Z^{n-1}_s|\\
& \le\theta C|Z_s-Z^{n-1}_s| + K|Z_s-Z^{n}_s|+\theta|Z_s-Z^{n-1}_s|\,.
\end{split}
\end{equation} 
Finally we note that $\xi \in L^2(\Omega, \mathcal F_T, \hat{\mathbb P})$ and $F_s(0,0)\in\hat{\mathbb{H}}^2$,
so by Lemma~\ref{policy convergence estimate}, together with~\eqref{eq pia bsde iter in P hat2} and~\eqref{eq: BSDE for value function policy iteration},
we have $q \in (0,1)$ and $\gamma \geq 0$ such that for all $t\in [0,T]$
\begin{equation}
\label{eq conclusion of contr lem}
e^{\gamma t}\hat{\mathbb E} |Y_t - Y^n_t|^2 + \|Z - Z^n\|_{\hat{\mathbb H}^2_\gamma}^2 \leq q \|Z - Z^{n-1}\|_{\hat{\mathbb H}^2_\gamma}^2\,.
\end{equation}
Similarly as before, using~\eqref{eq conclusion of contr lem}, we conclude that
\begin{equation*}
\begin{split}
&|v(t,x)-v^n(t,x)|^2 \\
&\qquad \leq q^n \hat{\mathbb E} \int_t^T  e^{\gamma (T-t)}|\sigma(s,X^{t,x,\alpha^\ast}_s)|^2|D_x v(s,X^{t,x,\alpha^\ast}_s) - D_x v^0(s,X^{t,x,\alpha^\ast}_s)|^2\,ds\,.
\end{split}
\end{equation*}
This concludes the proof of the theorem.
\end{proof}

\begin{remark}
Consider briefly the situation where the diffusion coefficient 
also depends on the control, i.e., $\sigma = \sigma^a(t,x)$.
% 	
%	If we assume that the volatility is allowed to depend on th control we will be in the situation where we need to work with so called in literatures second order BSDEs (2BSDE). The reason for that is because 
After applying It\^o's formula to $v^n(s,X_s)$ and substituting the solution to the linear PDE for $v^n$ we get
	\begin{equation*}
	\begin{split}
	dv^n(s,X_s) 
	= & \big[(b^{\alpha^\ast_s} D_x v^n)(s,X_s)+\frac12\text{tr}(\sigma^{\alpha^\ast_s}(\sigma^{\alpha^\ast_s})^\top D^2_x v^n)(s,X_s)\\
	&  - (b^{a^n}D_xv^n)(s,X_s)-\frac12\text{tr}(\sigma^{a^n}(\sigma^{a^n})^\top D^2_x v^n)(s,X_s)
	- f^{a^n}(s, X_s) \big]\,ds\\
	& + (D_x v^n\sigma)(s,X_s) \, dW_s	\,.
	\end{split}
	\end{equation*}
	The resulting object can be seen as a second order BSDE (2BSDE). Analysis of 2BSDEs goes beyond the scope of this paper.
%	However, proofs of existence of solutions to 2BSDEs are not based on a contraction argument and hence one wouldn't be able to apply the technique of this article to obtain a rate of convergence. 
%	Therefore, since we have terms as $\frac12\text{tr}(\sigma^{\alpha^\ast_s}(\sigma^{\alpha^\ast_s})^\top D^2_x v^n)(s,X_s)$ and \\ $\frac12\text{tr}(\sigma^{a^n}(\sigma^{a^n})^\top D^2_x v^n)(s,X_s)$ to deal with we need to study 2BSDE theory. Unfortunately, it is not clear, at least for now, how we can get the fixed point type argument and consequently get the convergence rate for the policy improvement algorithm.
\end{remark}
\begin{remark}
Let us briefly consider the infinite-time-horizon control problem. 
In this case we consider a constant $\lambda > 0$ and the gain functional:
\begin{equation*}
	J(x,\alpha)=\E\left[\int_0^{\infty}e^{-\lambda s}f^{\alpha}(X^x_s)\,ds\right]\,.
\end{equation*}
It is known that the Bellman PDE for the value function is
\begin{equation*}
	\lambda v-\frac{1}{2}\text{tr}(\sigma\sigma^\top D_x^2 v)-\sup_{a\in A}\left(b^aD_xv+f^a\right)=0\,\,\,\text{on} \,\,\, \R^d\,.
\end{equation*}
The linear PDE from the iteration of the policy improvement algorithm then is
\begin{equation*}
	\lambda v^n-\frac{1}{2}\text{tr}(\sigma\sigma^\top D_x^2 v^n)-b^{a^n}D_x v^{n}-f^{a^n}=0\,\,\,\text{on} \,\,\, \R^d\,,
\end{equation*}
	where
\begin{equation*}
	a^n(x)=\arg\max_{a \in A}\left(b^a(x)D_xv^{n-1}(x)+f^a(x)\right)\,.
\end{equation*}
After applying It\^o's formula we get 
\begin{equation*}
\begin{split}
	dv^n(X_s) = & \left[ \frac12\text{tr}(\sigma\sigma^\top D^2_x v^n)(X_s) + (b^{\alpha^\ast_s} D_x v^n)(X_s) \right]\,ds + (D_x v^n\sigma) (X_s) \, dW_s\\
	= & \big[(b^{\alpha^\ast_s} D_x v^n)(X_s)  - (b^{a^n}D_xv^n)(X_s)- f^{a^n}(X_s) +\lambda v^n(X_s) \big]\,ds\\
	& + (D_x v^n\sigma)(X_s) \, dW_s	\,.
\end{split}
\end{equation*}
Let $Y^n_t:=v^n(X_t)$ and 	$Z^n_t:=\sigma(X_t)D_xv^n(X_t)$. Then after change of measure we may write
\begin{equation}
\label{eq: BSDE in infinite horizon}
	dY^n_s=-\big[(b^{a(X_s,Z^{n-1}_s)}\sigma^{-1})(X_s)Z^n_s+ f^{a(X_s,Z^{n-1}_s)}(X_s) -\lambda Y^n_s \big]\,ds + Z^n_s \, d\widehat{W}_s\,.
\end{equation} 
Let
\begin{equation*}
F_s(z,Z):=(b^{a(X_s,z)}\sigma^{-1})(X_s)Z+ f^{a(X_s,z)}(X_s)\,.
\end{equation*}
Hence~\eqref{eq: BSDE in infinite horizon} becomes
\begin{equation}
\label{eq: BSDE in infinite horizon 2}
dY^n_s=\left[-F_s(Z^{n-1}_s,Z^n_s) +\lambda Y^n_s \right]ds + Z^n_s \, d\widehat{W}_s\,,\,\,\,s\in [0,\infty)\,.
\end{equation}
To proceed, we need a suitable contraction-type inequality for this infinite time horizon BSDE.
Buckdahn and Peng~\cite{Buckdahn1999Ergodic} studied infinite time horizon BSDEs and have proved existence and uniqueness of their solutions for sufficiently large values of $\lambda$. 
To get the required contraction-type inequality we can use similar calculations as in Fuhrman and Tessitore~\cite[Theorems 3.2 and 3.7]{Fuhrman2004Infinite}, where they use Banach's fixed point theorem to show existence and uniqueness of solutions to infinite time horizon BSDEs.
Hence, for sufficiently large $\lambda > 0$ we would obtain results analogous to Theorem~\ref{main theorem alg 2} as well as the other theorems in the article.

\end{remark}

\section{Policy improvement}
\label{sec policy improvement}

We want to show that the policy obtained at each step of Algorithm~\ref{alg pia}
is an improvement on the one from the previous step. 
This is formulated as Theorem~\ref{thm pia improves} below. 
Note that we do not require Assumption~\ref{ass extra for pia} here.
\begin{theorem}
\label{thm pia improves}
Let Assumptions~\ref{assumption controlled SDE}, \ref{assumption rewards}, and \ref{ass convexity} hold. 
Assume that there exists $K\ge 0$ such that $\forall t\in[0,T]$, $\forall x\in\R^d$, and $\forall a\in A$
\[
|b^{a}\sigma^{-1}(t,x)|<K. 
\] 
Fix $n \in \mathbb N$. Let $v^n$ and $v^{n+1}$ be the solutions of~\eqref{eq policy iteration pde} at 
steps $n$ and $n+1$ of the algorithm.
Then for all $t\in[0,T]$, $x\in\R^d$ it holds that
\[
v^{n+1}(t,x)\ge v^{n}(t,x).
\]
\end{theorem}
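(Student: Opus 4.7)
The plan is to compare $v^{n+1}$ with $v^n$ along a diffusion driven by the \emph{updated} Markov policy $a^{n+1}$, and then to exploit the defining property~\eqref{eq def anplus1} of $a^{n+1}$ to show that the drift of $v^{n+1}-v^n$ along such a trajectory is non-positive. Since the two value functions share the terminal condition $g$, this will force $v^{n+1}(t,x)\ge v^n(t,x)$.

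First I would fix $(t,x)\in[0,T]\times\mathbb R^d$ and realize a process $(X_s)_{s\in[t,T]}$ with $X_t=x$ solving $dX_s = b^{a^{n+1}}(s,X_s)\,ds + \sigma(s,X_s)\,dW_s$ as a weak solution: the Lipschitz continuity of $\sigma$ gives a unique strong solution of the driftless SDE $dX_s=\sigma(s,X_s)\,dW_s$, and the bound $|b^a\sigma^{-1}|<K$ assumed in the theorem lets one introduce the drift via a Girsanov change of measure, exactly as in Lemma~\ref{lemma P hat}.

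Next I would apply Itô's formula to $v^n(\cdot,X)$ and $v^{n+1}(\cdot,X)$ and substitute the linear PDE~\eqref{eq policy iteration pde} satisfied by each. Since $X$ is driven by $a^{n+1}$, the drift of $v^{n+1}(\cdot,X)$ collapses to $-f^{a^{n+1}}(\cdot,X)$, while the drift of $v^n(\cdot,X)$ equals $[(b^{a^{n+1}}-b^{a^n})D_x v^n - f^{a^n}](\cdot,X)$. Subtracting, the drift of $(v^{n+1}-v^n)(\cdot,X)$ is the pointwise quantity
\[
-\Bigl[\bigl(b^{a^{n+1}}D_x v^n + f^{a^{n+1}}\bigr)-\bigl(b^{a^n}D_x v^n + f^{a^n}\bigr)\Bigr](\cdot,X),
\]
which is non-positive by~\eqref{eq def anplus1}. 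Integrating from $t$ to $T$, using $v^{n+1}(T,\cdot)=g=v^n(T,\cdot)$, and taking expectation removes the stochastic integral (a genuine martingale since $|D_x v^n|$ and $|D_x v^{n+1}|$ are bounded by Krylov's gradient estimate, applied as in Remark~\ref{rem grad bdd}, and $\sigma$ has linear growth), yielding
\[
v^{n+1}(t,x)-v^n(t,x) = \mathbb E\!\int_t^T\!\Bigl[\bigl(b^{a^{n+1}}D_x v^n + f^{a^{n+1}}\bigr)-\bigl(b^{a^n}D_x v^n + f^{a^n}\bigr)\Bigr](s,X_s)\,ds \ge 0,
\]
which is the claim.

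The main obstacle is the reduced regularity assumed here: without Assumption~\ref{ass extra for pia} the feedback $a^{n+1}$ is merely measurable, so both existence of the forward SDE and classical $C^{1,2}$-regularity of $v^n, v^{n+1}$ are not immediate. The boundedness hypothesis $|b^a\sigma^{-1}|<K$ is placed precisely to handle the SDE side via Girsanov, and on the PDE side one can either appeal to a Sobolev-solution framework (under which Itô's formula still holds, e.g.\ via Krylov's estimates) or recast each linear subproblem as a BSDE and apply the argmax comparison to the BSDE drivers directly. Either route leaves the calculation above unchanged.
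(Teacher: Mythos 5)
Your proof is correct, but it takes a genuinely different route from the paper. The paper fixes the forward diffusion $X^{t,x,\alpha^\ast}$ driven by the \emph{optimal} control, represents $v^n(\cdot,X)$ and $v^{n+1}(\cdot,X)$ as solutions of two BSDEs with drivers $\phi^1_s(z)=F_s(Z^{n-1}_s,z)$ and $\phi^2_s(z)=F_s(Z^{n}_s,z)$, observes that $\phi^2_s(Z^n_s)\ge\phi^1_s(Z^n_s)$ by the argmax property~\eqref{eq def anplus1}, and then invokes the comparison principle for BSDEs (Lemma~\ref{comparison principle}). You instead run the diffusion under the \emph{updated} policy $a^{n+1}$, compute the drift of $v^{n+1}-v^n$ along it directly from the two linear PDEs, and conclude from the sign of the pointwise quantity $\bigl(b^{a^{n+1}}D_xv^n+f^{a^{n+1}}\bigr)-\bigl(b^{a^n}D_xv^n+f^{a^n}\bigr)\ge 0$ after taking expectations; this is the classical verification-style argument and is more elementary, since no BSDE comparison theorem is needed. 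Your choice of trajectory also has a technical advantage: the paper's proof requires the optimal control $\alpha^\ast$ and its associated diffusion from Remark~\ref{remark alpha star}, whose construction uses the Lipschitz properties of the control function from Assumption~\ref{ass extra for pia} --- an assumption deliberately \emph{not} imposed in Theorem~\ref{thm pia improves} --- whereas your weak solution under $a^{n+1}$ needs only the bound $|b^a\sigma^{-1}|<K$ that the theorem does assume. The price both arguments pay equally is the regularity of $v^n,v^{n+1}$ needed to apply It\^o's formula when $a^n,a^{n+1}$ are merely measurable; you flag this honestly and the Sobolev/It\^o--Krylov or BSDE reformulation you sketch is the standard repair, so this is not a gap relative to the paper.
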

\begin{proof}
Let $X = X^{t,x,\alpha^\ast}$ be the solution to the SDE~\eqref{sde} started from $(t,x)$ and controlled by the optimal control process $\alpha^\ast$; see Remark~\ref{remark alpha star}. 
Then, as in the proof of Theorem~\ref{main theorem alg 2}, we get that for $k=n,n+1$ with 
$Y^k=Y^{k,t,x}=v^k(\cdot, X^{t,x,\alpha^\ast})$ and with $Z^k=Z^{k,t,x}=(\sigma D_x v^k)(\cdot, X^{t,x,\alpha^\ast})$ 
we have the BSDE representation
\begin{equation*}
Y_t^k= \xi+\int_{t}^{T}F_s(Z^{k-1}_s,Z^{k}_s)ds-\int_{t}^{T}Z^k_s\,d\widehat{W}_s\,,\,\,k=n,n+1,	
\end{equation*}
where 
\[
F_s(z,Z):=b^{a(s,X_s,z)}(s,X_s)\sigma^{-1}(s,X_s)Z+f^{a(s,X_s,z)}(s,X_s)\,.
\]
Let us denote for $s\in[t,T]$ and $z\in\R^d$
\[
\phi^2_s(z):=F_s(Z^{n}_s,z)\,\,\,\text{and }\,\,\, \phi^1_s(z):=F_s(Z^{n-1}_s,z)\,.
\]
Hence, notice that by the definition of the $a^{n+1}$ (see~\eqref{eq def anplus1}), we have for all $s\in [t,T]$ that
\begin{equation*}
\begin{split}
\phi^2_s(Z^{n}_s)=&F_s(Z^n_s,Z^n_s)=b^{a^{n+1}}(s,X_s)\sigma^{-1}(s,X_s)Z^n_s+f^{a^{n+1}}(s,X_s)\\
=&\max_{a\in A}\left((b^aD_xv^{n})(s,X_s)+f^a(s,X_s)\right)\geq (b^{a^n}D_xv^{n})(s,X_s)+f^{a^n}(s,X_s)\\
=&F_s(Z^{n-1}_s,Z^{n}_s)=\phi^1_s(Z^n_s).
\end{split}
\end{equation*}
Therefore by the comparison principle for BSDEs (see Lemma~\ref{comparison principle}), we get
\[
Y_{t'}^{n+1}\ge Y^{n}_{t'}\,\,\, \forall t'\in [t,T].
\]
Hence, we have
\[
v^{n+1}(t,x) = Y^{n+1,t,x}_t \ge  Y^{n,t,x}_t = v^n(t,x)\,.
\]
\end{proof}

\begin{remark}
It is perhaps interesting to note that the comparison principle for BSDEs cannot be used to deduce that in the gradient iteration algorithm we have 
an ``improvement'' at each step.
Indeed, let us write the BSDE representation of the two steps of gradient iteration for $n,n+1\in\mathbb{N}$,
\begin{equation*}
Y_t^n= \xi+\int_{t}^{T} F_s(Z^{n-1}_s)ds-\int_{t}^{T}Z^n_s\,d\widehat{W}_s\,
\end{equation*}
and
\begin{equation*}
Y_t^{n+1}= \xi+\int_{t}^{T} F_s(Z^{n}_s)ds-\int_{t}^{T}Z^{n+1}_s\,d\widehat{W}_s\,,
\end{equation*}
where 
\[
F_s(z) := b^{a(s,X_s,z)}(s,X_s)\sigma^{-1}(s,X_s) z + f^{a(s,X_s,z)}(s,X_s).
\]
In order to apply a comparison principle for BSDEs (see Lemma~\ref{comparison principle}), we would need to have $F_s(Z^{n-1}_s)\le F_s(Z^{n}_s)$.
Nevertheless we observe that
\begin{equation*}
\begin{split}
F_s(Z^{n-1}_s)&=b^{a(s,X_s,Z^{n-1}_s)}\sigma^{-1}(s,X_s)Z^{n-1}_s+f^{a(s,X_s,Z^{n-1}_s)}\\
&=b^{a^n}(s,X_s)\sigma^{-1}(s,X_s)Z^{n-1}_s+f^{a^n}(s,X_s)
\\&=\max_{a\in A}(b^{a}(s,X_s)D_xv^{n-1}(s,X_s)+f^a(s,X_s)).
\end{split}
\end{equation*}
Similarly,
\begin{equation*}
\begin{split}
F_s(Z^{n}_s)&=b^{a(s,X_s,Z^{n}_s)}\sigma^{-1}(s,X_s)Z^{n}_s+f^{a(s,X_s,Z^{n}_s)}\\
&=b^{a^{n+1}}(s,X_s)\sigma^{-1}(s,X_s)Z^{n}_s+f^{a^{n+1}}(s,X_s)
\\&=\max_{a\in A}(b^{a}(s,X_s)D_xv^{n}(s,X_s)+f^a(s,X_s)).
\end{split}
\end{equation*}
From the above calculations we have no way to conclude that $F_s(Z^{n-1}_s)\le F_s(Z^{n}_s)$.
Thus the gradient iteration algorithm is not guaranteed to be improving the policy with each step.
\end{remark}

\section{Stability under Perturbations to Solution of the Linear PDE}
\label{sec stability lin PDE}
In this section we study a stability property of the policy improvement algorithm
under perturbations to solutions of the linear PDE~\eqref{eq policy iteration pde}
since in practical applications one will only solve this equation approximately.
Of course the maximization step~\eqref{eq def anplus1} of Algorithm~\ref{alg pia} can
now be performed only with this approximate solution, thus feeding the errors into further iterations.

Let $\varepsilon$ be a parameter (or a set of parameters), 
which determines the accuracy of our approximation to the solution of the linear PDE~\eqref{eq policy iteration pde}.
Let $\pi^n_\varepsilon$ be the policy at iteration $n$ obtained from an approximate
solution to the linear PDE. 
Let $v_\varepsilon^n$ denote the solution to
\begin{equation}
\label{eq pde for perturbations}
\begin{split}
\partial_t v_\varepsilon^n + \frac{1}{2}\text{tr}(\sigma \sigma^\top D^2_x v_\varepsilon^n)+b^{\pi^n_\varepsilon} D_x v^n_\varepsilon + f^{\pi^n_\varepsilon} & = 0\text{ on }[0,T)\times\R^d\,,\\
v_\varepsilon^n (T,\cdot) & = g\,\,\text{ on } x\in\R^d\,.
\end{split}
\end{equation}
At step $n$ of Algorithm~\ref{alg pia} we approximate the solution to the  
equation above (this is PDE~\eqref{eq policy iteration pde} but with $\pi^n_\varepsilon$ replacing $a^n$ everywhere).
We will denote such approximation by $\tilde v^n_\varepsilon$. 
The policy function for the next iteration step is then given by
\[
\pi^{n+1}_\varepsilon(t,x)=a(t,x,(\sigma D_x\tilde{v}^n_\varepsilon)(t,x))=
\text{arg}\max_{a \in A} \left[(b^a D_x\tilde{v}^n_\varepsilon)(t,x) + f^a(t,x) \right]\,,
\]
recalling that the function $a=a(t,x,z)$ was defined in~\eqref{control function}. 
We need to assume that $(t,x)\mapsto D_x \tilde v_\varepsilon^n$ is bounded so that $\pi_\varepsilon^{n+1}$ is Lipschitz in $x$
so that the solution to~\eqref{eq pde for perturbations} is $C^{1,2}([0,T]\times \mathbb R^d)$.
This assumption is not really a restriction as we know that the gradient of
the value function is bounded under our assumptions; see Krylov~\cite[Chapter 4, section 1, Theorem 1]{krylov controlled} and also Remark~\ref{remark alpha star}.
Any reasonable approximation should retain this property.

\begin{theorem}
Let Assumptions \ref{assumption controlled SDE}, \ref{assumption rewards}, \ref{ass convexity}, and \ref{ass extra for pia} hold. 
Let $(v^n)_{n\in \mathbb N}$ be the approximation sequence given by Algorithm~\ref{alg pia}. 
Let $(v^n_\varepsilon)_{n\in \mathbb N}$ be the approximation sequence given by~\eqref{eq pde for perturbations}.
Let $\alpha^*$ and $X^{t,x,\alpha^*}$ be the optimal control process for~\eqref{cp} 
and the associated diffusion started from $(t,x) \in  [0,T] \times \R^d$.
Assume that $D_x \tilde v_\varepsilon^n$ is uniformly bounded.
Define 
\[
\begin{split}
E^{k}_{t,x} := \bigg\|\Big[(\sigma (D_x v^k_\varepsilon - D_x \tilde v^k_\varepsilon))(\cdot, X^{t,x,\alpha^*})\Big]
\mathcal E^{-1/2}((b^{\alpha^*}\sigma^{-1})(\cdot,X^{t,x,\alpha^*}) \bullet W)_T
\bigg \|_{\mathbb H^2}\,.	
\end{split}
\]
Then there is $q\in(0,1)$ and $\gamma > 0$, depending only on $K, \theta, T$, such that for all $(t,x)\in[0,T]\times\R^d$  there exists $C=C(t,x)$ such that
\begin{equation*}
\begin{split}
& |v^n(t,x) -  v^n_\varepsilon(t,x)|^2 \le  C(t,x)q^{n}+2e^{\gamma (T-t)}\sum_{k=1}^nq^kE^{n-k}\,.	
\end{split}
\end{equation*}
\end{theorem}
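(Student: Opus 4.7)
The plan is to mirror the BSDE argument of Theorem~\ref{main theorem alg 2}, now comparing the exact iteration $v^n$ with the perturbed iteration $v^n_\varepsilon$ rather than with the value function $v$. Fix $(t,x)\in[0,T]\times\R^d$, let $X=X^{t,x,\alpha^*}$, and let $\hat{\mathbb P}$, $\widehat W$ be as in Lemma~\ref{lemma P hat}. Setting $Y^n_s := v^n(s,X_s)$, $Z^n_s := (\sigma D_x v^n)(s,X_s)$ and analogously $Y^n_{\varepsilon,s}$, $Z^n_{\varepsilon,s}$, $\tilde Z^n_{\varepsilon,s}$, and writing
\[
F_s(z,Z):=b^{a(s,X_s,z)}(s,X_s)\sigma^{-1}(s,X_s)Z+f^{a(s,X_s,z)}(s,X_s),
\]
It\^o's formula together with~\eqref{eq policy iteration pde} and~\eqref{eq pde for perturbations}, followed by the Girsanov change of measure, yields
\begin{align*}
Y^n_t &= \xi+\int_t^T F_s(Z^{n-1}_s,Z^n_s)\,ds-\int_t^T Z^n_s\,d\widehat W_s,\\
Y^n_{\varepsilon,t} &= \xi+\int_t^T F_s(\tilde Z^{n-1}_{\varepsilon,s},Z^n_{\varepsilon,s})\,ds-\int_t^T Z^n_{\varepsilon,s}\,d\widehat W_s.
\end{align*}
The essential asymmetry is that the first slot of the generator in the perturbed BSDE is $\tilde Z^{n-1}_\varepsilon$ (not $Z^{n-1}_\varepsilon$), because $\pi^n_\varepsilon$ is computed from the approximate $\tilde v^{n-1}_\varepsilon$.

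Subtracting the two BSDEs, I would apply exactly the Lipschitz bound~\eqref{lipschitz F policy iteration} from the proof of Theorem~\ref{main theorem alg 2}, with $Z^{n-1}$ playing the role of the value-function $Z$ there (the uniform bound on $\sigma^{-1}Z^{n-1}=D_xv^{n-1}$ follows from~\cite[Chapter 4, section 1, Theorem 1]{krylov controlled} applied to the value function of the Markov control $a^{n-1}$, and analogously for $v^{n-1}_\varepsilon$, whose gradient is bounded thanks to the hypothesis that $D_x\tilde v^{n-1}_\varepsilon$ is uniformly bounded) and with $\tilde Z^{n-1}_\varepsilon$ playing the role of $Z^{n-1}$. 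Lemma~\ref{policy convergence estimate} then yields, for some $q'\in(0,1)$ and $\gamma\ge0$ depending only on $K,\theta,T$,
\[
e^{\gamma t}\hat{\mathbb E}|Y^n_t-Y^n_{\varepsilon,t}|^2+\|Z^n-Z^n_\varepsilon\|^2_{\hat{\mathbb H}^2_\gamma}\le q'\|Z^{n-1}-\tilde Z^{n-1}_\varepsilon\|^2_{\hat{\mathbb H}^2_\gamma}.
\]
Splitting via the triangle inequality, $\|Z^{n-1}-\tilde Z^{n-1}_\varepsilon\|^2\le 2\|Z^{n-1}-Z^{n-1}_\varepsilon\|^2+2\|Z^{n-1}_\varepsilon-\tilde Z^{n-1}_\varepsilon\|^2$, and iterating gives
\[
\|Z^n-Z^n_\varepsilon\|^2_{\hat{\mathbb H}^2_\gamma}\le q^n\|Z^0-Z^0_\varepsilon\|^2_{\hat{\mathbb H}^2_\gamma}+\sum_{k=1}^n q^k\|Z^{n-k}_\varepsilon-\tilde Z^{n-k}_\varepsilon\|^2_{\hat{\mathbb H}^2_\gamma}
\]
for a suitable $q\in(0,1)$.

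To conclude, I would evaluate at $s=t$ (using that $Y^n_t=v^n(t,x)$ and $Y^n_{\varepsilon,t}=v^n_\varepsilon(t,x)$ are deterministic) and convert each $\hat{\mathbb H}^2_\gamma$-norm on the right back into an $\mathbb H^2$-norm under $\mathbb P$. Using $d\hat{\mathbb P}/d\mathbb P=\mathcal E((b^{\alpha^*}\sigma^{-1})(\cdot,X)\bullet W)_T$, Fubini--Tonelli, and a Cauchy--Schwarz step that splits the Radon--Nikodym density as $\mathcal E_T^{1/2}\cdot\mathcal E_T^{1/2}$ inside the time integral, one produces exactly the quantities $E^{n-k}_{t,x}$ weighted by $\mathcal E^{-1/2}$ as in the statement, with the factor $e^{\gamma(T-t)}$ coming from the exponential weight in $\hat{\mathbb H}^2_\gamma$. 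The constant $C(t,x)$ absorbs the initial discrepancy $\|Z^0-Z^0_\varepsilon\|^2_{\hat{\mathbb H}^2_\gamma}$ together with any bounded constants arising from this measure change.

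The main obstacle I foresee is controlling the contraction constant in the third step: the factor of~$2$ from the triangle inequality must be absorbed without destroying contractivity, which forces either a choice of $\gamma$ in Lemma~\ref{policy convergence estimate} large enough that its native rate $q'$ satisfies $2q'<1$, or a Young inequality $(a+b)^2\le(1+\eta)a^2+(1+1/\eta)b^2$ with small $\eta$ together with a correspondingly larger $\gamma$. A secondary technical point is the measure-change step producing the $\mathcal E^{-1/2}$ weight in $E^k_{t,x}$; this is routine once one notes that~\eqref{change_of_measure} ensures $\mathcal E$ is a true martingale with all moments needed to keep the Cauchy--Schwarz constants finite.
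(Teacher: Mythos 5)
Your overall strategy (It\^o plus Girsanov to get the two BSDEs, a contraction lemma, a triangle inequality, geometric unrolling, and a Cauchy--Schwarz measure change producing the $\mathcal E^{-1/2}$ weight) is the same as the paper's, and your handling of the factor of $2$ via the refinement $q\in(0,1/2)$ in Lemma~\ref{policy convergence estimate} is exactly right. However, there is a genuine gap in the one place where you deviate from the paper: you compare $(Y^n,Z^n)$ with $(Y^n_\varepsilon,Z^n_\varepsilon)$ \emph{directly}, which forces you to bound $|F_s(Z^{n-1}_s,Z^n_s)-F_s(\tilde Z^{n-1}_{\varepsilon,s},Z^n_{\varepsilon,s})|$. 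Any decomposition of the drift term $b^{a(\cdot,Z^{n-1})}\sigma^{-1}Z^n-b^{a(\cdot,\tilde Z^{n-1}_\varepsilon)}\sigma^{-1}Z^n_\varepsilon$ produces a term of the form $|b^{a(\cdot,Z^{n-1})}-b^{a(\cdot,\tilde Z^{n-1}_\varepsilon)}|\cdot|\sigma^{-1}Z^n|$ (or the same with $Z^n_\varepsilon$), so the Lipschitz constant you feed into Lemma~\ref{policy convergence estimate} involves a uniform bound on $|D_xv^n|$ or $|D_xv^n_\varepsilon|$ --- note it is the \emph{second}-slot argument that appears, not $D_xv^{n-1}$ as you write. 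No such uniform-in-$n$ bound is available: Krylov's gradient estimate applied to the linear problem with frozen control $a^n$ yields a constant depending on the spatial Lipschitz constant of $x\mapsto b^{a^n(t,x)}(t,x)$, hence on the Lipschitz constant of $x\mapsto D_xv^{n-1}(t,x)$ (second-order information on the previous iterate), and this is not controlled uniformly in $n$ under Assumptions~\ref{assumption controlled SDE}--\ref{ass extra for pia}. Consequently your contraction rate $q'$ would a priori depend on $n$, breaking the claim that $q$ depends only on $K,\theta,T$.

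The paper's proof is arranged precisely to avoid this: it never compares two iterates directly, but instead inserts the value-function BSDE $(Y,Z)$ of~\eqref{eq: BSDE for value function policy iteration} via $\|Z^n-Z^n_\varepsilon\|^2\le 2\|Z-Z^n\|^2+2\|Z-Z^n_\varepsilon\|^2$ and applies Lemma~\ref{policy convergence estimate} twice, once for the exact iteration against $(Y,Z)$ (this is just the estimate from Theorem~\ref{main theorem alg 2}) and once for the perturbed iteration against $(Y,Z)$. In both applications the first-slot anchor is the value function's $Z$, so the only gradient bound ever needed is $|D_xv|\le C$ from Remark~\ref{rem grad bdd}. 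The subsequent unrolling, with $\|Z-\tilde Z^{k}_\varepsilon\|^2\le 2\|Z-Z^{k}_\varepsilon\|^2+2\|Z^{k}_\varepsilon-\tilde Z^{k}_\varepsilon\|^2$ at each step, then yields the stated bound with $q=2\tilde q$. To repair your argument, either adopt this triangulation through $(Y,Z)$, or add and justify a hypothesis that $D_xv^n_\varepsilon$ (not only $D_x\tilde v^n_\varepsilon$) is bounded uniformly in $n$.
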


\begin{proof} Let $X = X^{t,x,\alpha^\ast}$ be the solution to the SDE~\eqref{sde} started from $(t,x)$ and controlled by the optimal control process $\alpha^\ast$; see Remark~\ref{remark alpha star}.
By applying It\^o's formula  to $v^n_\varepsilon$ we get
\begin{equation*}
\begin{split}
dv^n_\varepsilon(s,X_s) = & \left[\partial_t v^n_\varepsilon(s,X_s) + \frac{1}{2}\text{tr}(\sigma\sigma^\top D^2_xv^n_\varepsilon)(s,X_s) + (b^{\alpha_s^*} D_x v^n_\varepsilon)(s,X_s) \right]\,ds \\ 
& + (D_x v^n_\varepsilon \sigma ) (s,X_s) \, dW_s\\
= & \left[(b^{\alpha_s^*} D_x v^n_\varepsilon)(s,X_s) - \left(b^{\pi^n_\varepsilon}D_x v^n_\varepsilon + f^{\pi^n_\varepsilon}\right)(s,X_s)
\right]\,ds\\
& + (D_x v^n_\varepsilon \sigma )(s,X_s) \, dW_s.	\,
\end{split}
\end{equation*}
Let us denote
\[
Y^n_{t,\varepsilon}:=v^n_\varepsilon(t,X_t)\,,\,\,\,\,Z^n_{t,\varepsilon}:= \sigma(t,X_t)D_xv^n_\varepsilon(t,X_t)\,,\,\,\,\xi:=g(X_T)\,,
\]
\[
F_s(z,Z):=b^{a(s,X_s,z)}(s,X_s)\sigma^{-1}(s,X_s)Z+f^{a(s,X_s,z)}(s,X_s)
\]
and
\[
\tilde{Z}^{n-1}_{t,\varepsilon}:=\sigma(t,X_t)D_x\tilde{v}^{n-1}_\varepsilon(t,X_t),
\]
where $\tilde{v}^{n-1}_\varepsilon$ is an approximate solution to corresponding PDE. 
Then using this notation, we may write
\begin{equation*}
\begin{split}
Y^n_{t,\varepsilon}=\xi-\int_{t}^{T}\left[(b^{\alpha_s^*}\sigma^{-1})(s,X_s)Z^n_{s,\varepsilon} -F_s(\tilde{Z}^{n-1}_{s,\varepsilon},Z^n_{s,\varepsilon}) \right]ds-\int_{t}^{T}Z^n_{s,\varepsilon}\,dW_s\,.	
\end{split}
\end{equation*}
Let $\hat{\mathbb P}$ and $\widehat{W}$ be given by Lemma~\ref{lemma P hat}.
Then the above equation becomes
\begin{equation*}
Y^n_{t,\varepsilon}= \xi+\int_{t}^{T}F_s(\tilde{Z}^{n-1}_{s,\varepsilon},Z^n_{s,\varepsilon}) ds-\int_{t}^{T}Z^n_{s,\varepsilon}\,d\widehat{W}_s\,.
\end{equation*}
We want to study the difference of $(Y^n_{\varepsilon},Z^n_{\varepsilon})$ with $(Y^n,Z^n)$, where $(Y^n,Z^n)$ solves the BSDE~\eqref{eq pia bsde iter in P hat2}.
\begin{equation}
\begin{split}
\label{eq stability estimate}
\hat{\mathbb E}[e^{\gamma t}|Y^n_t-Y^n_{t,\varepsilon}|^2]+&\|Z^n-Z^n_{\varepsilon}\|_{\hat{\mathbb H}^2_\gamma}^2\le 2\hat{\mathbb E}[e^{\gamma t}|Y_t-Y^n_{t}|^2]+2\|Z-Z^n\|_{\hat{\mathbb H}^2_\gamma}^2\\
&+2\hat{\mathbb E}[e^{\gamma t}|Y_t-Y^n_{t,\varepsilon}|^2]+2\|Z-Z^n_{\varepsilon}\|_{\hat{\mathbb H}^2_\gamma}^2\,,
\end{split}
\end{equation}
where $(Y,Z)$ solves the BSDE~\eqref{eq: BSDE for value function policy iteration}. Due to~\eqref{lipschitz F policy iteration} and
\begin{equation*}
|F_s(Z_s,Z_s)-F_s(\tilde{Z}^{n-1}_{s,\varepsilon},Z^n_{s,\varepsilon})|\le (C\theta+\theta)|Z_s-\tilde{Z}^{n-1}_{s,\varepsilon}|+K|Z_s-Z^n_{s,\varepsilon}|\,,
\end{equation*}
we can apply Lemma~\ref{policy convergence estimate}. Hence, there is $\tilde{q}\in (0,1/2)$ and $\gamma >0$ such that
\begin{equation*}
\hat{\mathbb E}[e^{\gamma t}|Y_t-Y^n_{t}|^2]+\|Z-Z^n\|_{\hat{\mathbb H}^2_\gamma}^2\le \tilde{q}\|Z-Z^{n-1}\|_{\hat{\mathbb H}^2_\gamma}^2
\end{equation*}
and
\begin{equation*}
\hat{\mathbb E}[e^{\gamma t}|Y_t-Y^n_{t,\varepsilon}|^2]+\|Z-Z^n_{\varepsilon}\|_{\hat{\mathbb H}^2_\gamma}^2\le \tilde q \|Z-\tilde{Z}^{n-1}_{\varepsilon}\|_{\hat{\mathbb H}^2_\gamma}^2\,.
\end{equation*}
Therefore we continue the estimate~\eqref{eq stability estimate}
\begin{equation*}
\begin{split}
\hat{\mathbb E}&[e^{\gamma t}|Y^n_t-Y^n_{t,\varepsilon}|^2]+\|Z^n-Z^n_{\varepsilon}\|_{\hat{\mathbb H}^2_\gamma}^2\\
&\le 2\hat{\mathbb E}[e^{\gamma t}|Y_t-Y^n_{t}|^2]+2\|Z-Z^n\|_{\hat{\mathbb H}^2_\gamma}^2+2\hat{\mathbb E}[e^{\gamma t}|Y_t-Y^n_{t,\varepsilon}|^2]+2\|Z-Z^n_{\varepsilon}\|_{\hat{\mathbb H}^2_\gamma}^2\,\\
&\le 2\tilde q\|Z-Z^{n-1}\|_{\hat{\mathbb H}^2_\gamma}^2+2\tilde q \|Z-\tilde{Z}^{n-1}_{\varepsilon}\|_{\hat{\mathbb H}^2_\gamma}^2\\
&\le 2\tilde q\|Z-Z^{n-1}\|_{\hat{\mathbb H}^2_\gamma}^2+4\tilde q\|Z-Z^{n-1}_{\varepsilon}\|_{\hat{\mathbb H}^2_\gamma}^2+4\tilde q\|Z^{n-1}_{\varepsilon}-\tilde{Z}^{n-1}_{\varepsilon}\|_{\hat{\mathbb H}^2_\gamma}^2\\
&\le 2\tilde q^2\|Z-Z^{n-2}\|_{\hat{\mathbb H}^2_\gamma}^2+4\tilde q^2\|Z-\tilde{Z}^{n-2}_{\varepsilon}\|_{\hat{\mathbb H}^2_\gamma}^2+4\tilde q\|Z^{n-1}_{\varepsilon}-\tilde{Z}^{n-1}_{\varepsilon}\|_{\hat{\mathbb H}^2_\gamma}^2\\
&\le 2\tilde q^2\|Z-Z^{n-2}\|_{\hat{\mathbb H}^2_\gamma}^2+8\tilde q^2\|Z-Z^{n-2}_{\varepsilon}\|_{\hat{\mathbb H}^2_\gamma}^2+8\tilde q^2\|Z^{n-2}_{\varepsilon}-\tilde{Z}^{n-2}_{\varepsilon}\|_{\hat{\mathbb H}^2_\gamma}^2\\
&\qquad+4\tilde q\|Z^{n-1}_{\varepsilon}-\tilde{Z}^{n-1}_{\varepsilon}\|_{\hat{\mathbb H}^2_\gamma}^2\le\cdots\\
&\le 2\tilde q^n\|Z-Z^{0}\|_{\hat{\mathbb H}^2_\gamma}^2+2^{n+1}\tilde q^n\|Z-Z^0_\varepsilon\|_{\hat{\mathbb H}^2_\gamma}^2+\sum_{k=1}^n2^{k+1}\tilde q^k\|Z^{n-k}_{\varepsilon}-\tilde{Z}^{n-k}_{\varepsilon}\|_{\hat{\mathbb H}^2_\gamma}^2\,.
\end{split}
\end{equation*}
This concludes the proof of the theorem.
\end{proof}

\section{Stability under Perturbation of the Maximization}
\label{sec stability max}
In this section we study a stability property of the gradient iteration algorithm under perturbations to maximization procedure~\eqref{eq gia update control}. 
Let $\bar{v}^{n}$ be the solution to  corresponding PDE at iteration $n$ of the gradient iteration algorithm, where instead of obtaining the control function corresponding to the exact maximum
\[
a^n(t,x) 
= a(t,x,(\sigma D_x\bar{v}^{n})(t,x))
=\text{arg}\max_{a \in A}((b^a D_x\bar{v}^{n}+f^a)(t,x))\,
\]
we only solve this maximization problem approximately and so we are dealing with 
a control function of the form
\[
\bar{a}(t,x,(\sigma D_x\bar{v}^{n})(t,x))
:=a(t,x,(\sigma D_x\bar{v}^{n})(t,x))+\varepsilon(t,x,(\sigma D_x\bar{v}^{n})(t,x))\,,
\]
where the function $\varepsilon=\varepsilon(t,x,z)$ determines the accuracy of our approximation. 

\begin{theorem}
Let Assumptions \ref{assumption controlled SDE}, \ref{assumption rewards}, \ref{ass convexity}, and \ref{ass extra for pia} hold. 
Let $(v^n)_{n\in\mathbb{N}}$ be the approximation sequence given by Algorithm \ref{alg via}. Let $(\bar{v}^n)_{n\in\mathbb{N}}$ be the approximation sequence given by the perturbations to the maximization procedure and assume that $v^0 = \bar v^0$. 
Let $\alpha^\ast$ and $X^{t,x,\alpha^\ast}$ be the optimal control process for~\eqref{cp} and the associated diffusion started from $(t,x)\in[0,T]\times\R^d$. Define
\[
E^{k+1}_{t,x}:=\left\|\left[1+|D_x\bar{v}^k(\cdot,X^{t,x,\alpha^*})|\right]\mathcal{E}^{-1/2}((b^{\alpha^*}\sigma^{-1})(\cdot,X^{t,x,\alpha^\ast}) \bullet W)_T\right\|^2_{\mathbb{H}^2}\,,
\]
\[
\varepsilon^{k+1}=\sup_{(s,y)\in[t,T]\times\R^d}|\varepsilon(s,y,(\sigma D_x\bar{v}^k)(s,y))|^2\,.
\]
Then there is $q\in(0,1)$ and $\gamma>0$, depending only on $K,\theta,T$, such that for all $(t,x)\in[0,T]\times\R^d$ there exists $C=C(t,x)$ such that
\begin{equation*}
|\bar{v}^n(t,x)-v^n(t,x)|^2\le  C(t,x)q^n+ e^{\gamma (T-t)}\frac{2\theta}{C\theta+K+\theta}\sum_{k=1}^{n}q^{n-k+1}\varepsilon^{k}E_{t,x}^{k}\,.
\end{equation*}
\end{theorem}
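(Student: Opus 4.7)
The proof should mirror the strategy in the proofs of Theorem~\ref{main theorem alg 1} and the stability theorem from Section~\ref{sec stability lin PDE}. The plan is to recast both $v^n$ and $\bar v^n$ as BSDEs along the optimally-controlled diffusion $X = X^{t,x,\alpha^*}$, compare each to the value-function BSDE via a triangle inequality, and then iterate the resulting perturbed contraction. Concretely, applying It\^o's formula to $v^n(\cdot, X)$ and $\bar v^n(\cdot, X)$, substituting the respective linear PDEs of Algorithm~\ref{alg via}, and changing measure via Lemma~\ref{lemma P hat} produces
\begin{equation*}
Y^n_t = \xi + \int_t^T F_s(Z^{n-1}_s)\,ds - \int_t^T Z^n_s\, d\widehat W_s, \qquad \bar Y^n_t = \xi + \int_t^T \bar F_s(\bar Z^{n-1}_s)\,ds - \int_t^T \bar Z^n_s\, d\widehat W_s,
\end{equation*}
where $\bar F$ is built from the perturbed control $\bar a = a + \varepsilon$. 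Assumption~\ref{ass extra for pia} then yields the key generator-mismatch bound
\begin{equation*}
|F_s(z) - \bar F_s(z)| \le |b^{a} - b^{\bar a}||\sigma^{-1}z| + |f^{a} - f^{\bar a}| \le \sqrt{\theta}\,|\varepsilon(s,X_s,z)|\,(1 + |\sigma^{-1}z|),
\end{equation*}
which, evaluated at $z = \bar Z^{n-1}_s$ and using $\sigma^{-1}\bar Z^{n-1}_s = D_x\bar v^{n-1}(s,X_s)$ together with the definition of $\varepsilon^n$, is controlled by $\sqrt{\theta\varepsilon^n}\,(1 + |D_x\bar v^{n-1}(s,X_s)|)$.

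Next I would apply the triangle inequality through the value-function BSDE $(Y,Z)$ of~\eqref{eq: BSDE for value function}:
\begin{equation*}
\hat{\mathbb E}[e^{\gamma t}|Y^n_t - \bar Y^n_t|^2] + \|Z^n - \bar Z^n\|^2_{\hat{\mathbb H}^2_\gamma} \le 2\mathrm{I}_n + 2\mathrm{II}_n,
\end{equation*}
where $\mathrm{I}_n$ and $\mathrm{II}_n$ denote the corresponding quantities for $Y^n - Y$ and $\bar Y^n - Y$ respectively. The term $\mathrm{I}_n$ is bounded by $\tilde q\|Z - Z^{n-1}\|^2_{\hat{\mathbb H}^2_\gamma}$ via Lemma~\ref{lem classic bsde iteration}, exactly as in the proof of Theorem~\ref{main theorem alg 1}. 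For $\mathrm{II}_n$, I would split
\begin{equation*}
F_s(Z_s) - \bar F_s(\bar Z^{n-1}_s) = [F_s(Z_s) - F_s(\bar Z^{n-1}_s)] + [F_s(\bar Z^{n-1}_s) - \bar F_s(\bar Z^{n-1}_s)];
\end{equation*}
the first bracket is $(C\theta + K + \theta)$-Lipschitz in $Z - \bar Z^{n-1}$ as in~\eqref{eq F lip val it}, and the second is the forcing estimated above. A standard BSDE a priori estimate (essentially Lemma~\ref{lem classic bsde iteration} enriched with a forcing term) then yields a perturbed contraction
\begin{equation*}
\mathrm{II}_n \le \tilde q \|Z - \bar Z^{n-1}\|^2_{\hat{\mathbb H}^2_\gamma} + \frac{\theta}{C\theta + K + \theta}\,\varepsilon^n\,\big\|1 + |D_x\bar v^{n-1}(\cdot,X)|\big\|^2_{\hat{\mathbb H}^2_\gamma},
\end{equation*}
the explicit fractional prefactor arising from the choice of Young-inequality parameter that balances the Lipschitz constant $C\theta + K + \theta$ against the forcing.

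Iterating this recursion exactly as in the proof from Section~\ref{sec stability lin PDE} unfolds $\|Z - \bar Z^{n-1}\|$ into a geometric sum $\sum_{k=1}^n q^{n-k+1}\varepsilon^k$ weighted by the $\hat{\mathbb H}^2_\gamma$-norms of $1 + |D_x\bar v^{k-1}|$, while the $\|Z - Z^{n-1}\|$ piece contributes $C(t,x)q^n$. It then remains to pass from the BSDE estimate back to the pointwise bound via $Y^{n,t,x}_t = v^n(t,x)$ and $\bar Y^{n,t,x}_t = \bar v^n(t,x)$, to absorb $e^{\gamma t}$ into an $e^{\gamma(T-t)}$ factor, and to convert the $\hat{\mathbb H}^2_\gamma$-norms back to $\mathbb H^2$-norms under $\mathbb P$ by expressing $d\mathbb P/d\hat{\mathbb P} = \mathcal E(M)_T^{-1}$ and applying Cauchy-Schwarz in the integrand; this produces the $\mathcal E(M)_T^{-1/2}$ weight appearing in $E^k_{t,x}$. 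The main obstacle will be tracking the precise constant $2\theta/(C\theta + K + \theta)$: this requires a careful parameter choice in the Young-type split of the BSDE a priori estimate, so that a strict contraction $\tilde q < 1/2$ is retained while the forcing term emerges with exactly this prefactor. A secondary technicality is the measure-change bookkeeping needed to identify the $\mathcal E(M)_T^{-1/2}$ weight in $E^k_{t,x}$; this rests on the exponential integrability of $\mathcal E(M)_T$, which follows from~\eqref{change_of_measure} via Novikov's criterion.
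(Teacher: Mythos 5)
Your proposal follows essentially the same route as the paper's proof: recast both iterates as BSDEs along $X^{t,x,\alpha^\ast}$ after the Girsanov change of measure, compare each to the value-function BSDE~\eqref{eq: BSDE for value function} by the triangle inequality, bound the generator mismatch by $\sqrt{\theta}\,|\varepsilon|\,(1+|\sigma^{-1}\bar Z^{n-1}_s|)$ via Assumption~\ref{ass extra for pia}, and invoke the forced-contraction estimate (the paper's Lemma~\ref{lem grad itr pert max}, which is exactly your ``Lemma~\ref{lem classic bsde iteration} enriched with a forcing term'') before unrolling the recursion using $\bar Z^0 = Z^0$. The only deviations are cosmetic bookkeeping of the prefactor, which you correctly flag as the delicate point, and the final passage from $\hat{\mathbb H}^2_\gamma$ back to the weighted $\mathbb H^2$-norm, which the paper itself leaves implicit.
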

\begin{proof} Let $X = X^{t,x,\alpha^\ast}$ be the solution to the SDE~\eqref{sde} started from $(t,x)$ and controlled by the optimal control process $\alpha^\ast$; see Remark~\ref{remark alpha star}. 
As in the proof of Theorem~\ref{main theorem alg 1} we can  write two BSDEs we get after the change of measure given by Lemma~\ref{lemma P hat}. 
The first BSDE arises from the perturbations of the maximization: 
\[
\bar{Y}_t^n=\xi+\int_{t}^{T}\bar{F}_s(\bar{Z}^{n-1}_s)\,ds-\int_{t}^T\bar{Z}_s^n\,d\widehat{W}_s\,,
\]
where
\[
\bar{F}_s(z)=b^{\bar{a}(s,X_s,z)}\sigma^{-1}(s,X_s)z+f^{\bar{a}(s,X_s,z)}(s,X_s)\,.
\]
The second BSDE arises from the gradient iteration algorithm with the maximization performed exactly:
\[
Y^n_t=\xi+\int_{t}^TF_s(Z^{n-1}_s)\, ds-\int_{t}^TZ^n_s\,d\widehat{W}_s\,,
\]
where
\[
F_s(z)=b^{a(s,X_s,z)}\sigma^{-1}(s,X_s)z+f^{a(s,X_s,z)}(s,X_s)\,.
\]
We want to study the difference of $(\bar{Y}^n,\bar{Z}^n)$ with $(Y^n,Z^n)$. Hence, notice that
\begin{equation}\label{eq: diff of bar Y^n-Y^n}
\begin{split}
e^{\gamma t}\hat\E|\bar{Y}^n_{t}-Y^n_{t}|^2&+\|\bar{Z}^n-Z^n\|^2_{\hat{\mathbb H}^2_\gamma}
\le 2e^{\gamma t}\hat\E|Y_{t}-Y^n_{t}|^2+2\|Z-Z^n\|^2_{\hat{\mathbb H}^2_\gamma}\\
&+2e^{\gamma t}\hat\E|Y_t-\bar{Y}^n_{t}|^2+2\|Z-\bar{Z}^n\|^2_{\hat{\mathbb H}^2_\gamma}\,,
\end{split}
\end{equation}
where $(Y,Z)$ solves~\eqref{eq: BSDE for value function}.
Therefore, since
\begin{equation*}
\begin{split}
|F_s(Z_s)-F_s(\bar{Z}^{n-1}_s)|\le (C\theta+K+\theta)|Z_s-\bar{Z}^{n-1}_s|\,,
\end{split}
\end{equation*}
we can apply Lemma \ref{lem grad itr pert max} so that there is $q\in(0,1)$ and $\gamma>0$ such that
\begin{equation}
\label{eq outcome of lemma pert max}
\begin{split}
e^{\gamma t}\hat\E|Y_t-\bar{Y}^n_{t}|^2+\|Z-\bar{Z}^n\|^2_{\hat{\mathbb H}^2_\gamma}
\le& q \|Z-\bar{Z}^{n-1}\|^2_{\hat{\mathbb H}^2_\gamma}\\
&+\frac{q}{C\theta+K+\theta}\|\bar{F}(\bar{Z}^{n-1})-F(\bar{Z}^{n-1})\|^2_{\hat{\mathbb H}^2_\gamma}.
\end{split}
\end{equation}
Now we need to estimate the second term of the right-hand side (RHS). Notice that by Assumption \ref{ass extra for pia} the following holds:
\begin{equation}
\label{eq diff of generators pert max} 
\begin{split}
&|\bar{F}_s(\bar{Z}^{n-1}_s)-F_s(\bar{Z}^{n-1}_s)|\\
%&=|b^{\bar{a}(s,X_s,\bar{Z}^{n-1}_s)}\sigma^{-1}(s,X_s)\bar{Z}^{n-1}_s+f^{\bar{a}(s,X_s,\bar{Z}^{n-1}_s)}(s,X_s)\\
%&-b^{a(s,X_s,\bar{Z}^{n-1}_s)}\sigma^{-1}(s,X_s)\bar{Z}^{n-1}_s-f^{a(s,X_s,\bar{Z}^{n-1}_s)}(s,X_s)|\\
&\le |\sigma^{-1}(s,X_s)\bar{Z}^{n-1}_s||b^{\bar{a}(s,X_s,\bar{Z}^{n-1}_s)}(s,X_s)-b^{a(s,X_s,\bar{Z}^{n-1}_s)}(s,X_s)|\\
&\qquad+|f^{\bar{a}(s,X_s,\bar{Z}^{n-1}_s)}(s,X_s)-f^{a(s,X_s,\bar{Z}^{n-1}_s)}(s,X_s)|\\
&\le \sqrt{\theta}|\sigma^{-1}(s,X_s)\bar{Z}^{n-1}_s||\varepsilon(s,X_s,\bar{Z}^{n-1}_s)|+\sqrt{\theta}|\varepsilon(s,X_s,\bar{Z}^{n-1}_s)|\,.
\end{split}
\end{equation}
Hence by~\eqref{eq diff of generators pert max} we have
\begin{equation}
\label{eq estimate for pert max}
\|\bar{F}(\bar{Z}^{n-1})-F(\bar{Z}^{n-1})\|^2_{\hat{\mathbb H}^2_\gamma}\le \theta\|(1+|\sigma^{-1}(\cdot,X)\bar{Z}^{n-1}|)\varepsilon(\cdot,X,\bar{Z}^{n-1})\|^2_{\hat{\mathbb H}^2_\gamma}\,.
\end{equation}
By inequalities~\eqref{eq: diff of bar Y^n-Y^n},~\eqref{eq outcome of lemma pert max},~\eqref{eq estimate for pert max}, and the result of Theorem~\ref{main theorem alg 1} and since $\bar{Y}^{t,x,n}_t=\bar{v}^n(t,x),\,Y^{t,x,n}_t=v^n(t,x)$ 
as well as $Z^0 = \bar Z^0$, we conclude that
\begin{equation*}
\begin{split}
e^{\gamma t}&|\bar{v}^n(t,x)-v^n(t,x)|^2
\\
&\le 2e^{\gamma t}\hat{\E}|Y_t-Y^n_t|^2+\|Z-Z^n\|^2_{\hat{\mathbb H}^2_\gamma}+2e^{\gamma t}\hat{\E}|Y_t-\bar{Y}^n_t|^2+\|Z-\bar{Z}^n\|^2_{\hat{\mathbb H}^2_\gamma}\\
& \le 2q\|Z-Z^{n-1}\|^2_{\hat{\mathbb H}^2_\gamma}+2q\|Z-\bar{Z}^{n-1}\|^2_{\hat{\mathbb H}^2_\gamma}+\frac{2q}{C\theta+K+\theta}\|\bar{F}(\bar{Z}^{n-1})-F(\bar{Z}^{n-1})\|^2_{\hat{\mathbb H}^2_\gamma}\\
&\le 4q^n\|Z-Z^0\|^2_{\hat{\mathbb H}^2_\gamma}+ \sum_{k=1}^{n}q^{k}\frac{2\theta}{C\theta+K+\theta}\|(1+|\sigma^{-1}(\cdot,X)\bar{Z}^{n-k}|)\varepsilon(\cdot,X,\bar{Z}^{n-k})\|^2_{\hat{\mathbb H}^2_\gamma}\,.
\end{split}
\end{equation*}
\end{proof}

We obtain the same result for the policy improvement algorithm.

\begin{theorem}
Let Assumptions \ref{assumption controlled SDE}, \ref{assumption rewards}, \ref{ass convexity}, and \ref{ass extra for pia} hold. 
Let $(v^n)_{n\in\mathbb{N}}$ be the approximation sequence given by Algorithm \ref{alg pia}. Let $(\bar{v}^n)_{n\in\mathbb{N}}$ be the approximation sequence given by the perturbations to the maximization procedure. 
Let $\alpha^\ast$ and $X^{t,x,\alpha^\ast}$ be the optimal control process for~\eqref{cp} and the associated diffusion started from $(t,x)\in[0,T]\times\R^d$. Define
\[
E^{k}_{t,x}:=\left\|\left[1+D_x\bar{v}^k(\cdot,X^{t,x,\alpha^\ast})\right]\mathcal{E}^{-1/2}((b^{\alpha^*}\sigma^{-1})(\cdot,X^{t,x,\alpha^\ast}) \bullet W)_T\right\|^2_{\mathbb{H}^2}\,,
\]
\[
\varepsilon^{k+1}=\sup_{(s,y)\in[t,T]\times\R^d}|\varepsilon(s,y,(\sigma D_x\bar{v}^k)(s,y))|^2\,.
\]
Then there is $q\in(0,1)$ and $\gamma>0$, depending only on $K,\theta,T$, such that for all $(t,x)\in[0,T]\times\R^d$ there is $C=C(t,x)$ such that
\begin{equation*}
|\bar{v}^n(t,x)-v^n(t,x)|^2\le C(t,x)q^n+ \frac{2\theta}{\max(C\theta+\theta,K)}e^{\gamma (T-t)} \sum_{k=1}^{n}q^{k}\varepsilon^{n-k+1}E_{t,x}^{n-k+1}\,.
\end{equation*}
\end{theorem}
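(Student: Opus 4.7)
The plan is to mirror the proof of the preceding theorem (the stability result under maximization perturbation for the gradient iteration algorithm), with the essential modification that the nonstandard contraction Lemma~\ref{policy convergence estimate} is invoked in place of Lemma~\ref{lem grad itr pert max}, since in the policy improvement setting the BSDE generator depends simultaneously on $Z^{n-1}$ and $Z^n$. Fix $(t,x)\in[0,T]\times\R^d$, let $X=X^{t,x,\alpha^\ast}$ be the optimally controlled trajectory, and pass to the measure $\hat{\mathbb P}$ of Lemma~\ref{lemma P hat}. Applying It\^o's formula to $\bar v^n(s,X_s)$ and using the linear PDE solved by $\bar v^n$ (i.e.,~\eqref{eq policy iteration pde} with $\bar a$ in place of $a^n$), and performing the Girsanov change of measure exactly as in the proof of Theorem~\ref{main theorem alg 2}, I obtain the BSDE
\[
\bar Y^n_t = \xi + \int_t^T \bar F_s(\bar Z^{n-1}_s,\bar Z^n_s)\,ds - \int_t^T \bar Z^n_s\,d\widehat W_s\,,
\]
with $\bar F_s(z,Z) := b^{\bar a(s,X_s,z)}(s,X_s)\sigma^{-1}(s,X_s)Z + f^{\bar a(s,X_s,z)}(s,X_s)$. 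Alongside I keep the BSDE~\eqref{eq pia bsde iter in P hat2} for $(Y^n,Z^n)$ and the value-function BSDE~\eqref{eq: BSDE for value function policy iteration} for $(Y,Z)$.

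The key step is a triangle inequality through the pivot $(Y,Z)$:
\[
e^{\gamma t}\hat\E|\bar Y^n_t - Y^n_t|^2 + \|\bar Z^n - Z^n\|_{\hat{\mathbb H}^2_\gamma}^2 \le 2e^{\gamma t}\hat\E|Y_t - Y^n_t|^2 + 2\|Z-Z^n\|_{\hat{\mathbb H}^2_\gamma}^2 + 2e^{\gamma t}\hat\E|Y_t - \bar Y^n_t|^2 + 2\|Z-\bar Z^n\|_{\hat{\mathbb H}^2_\gamma}^2\,.
\]
The first pair of terms is controlled by Lemma~\ref{policy convergence estimate} exactly as in Theorem~\ref{main theorem alg 2}, yielding a contraction by $\tilde q\|Z-Z^{n-1}\|_{\hat{\mathbb H}^2_\gamma}^2$. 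For the second pair, I apply the same lemma to the pair of BSDEs for $(Y,Z)$ and $(\bar Y^n,\bar Z^n)$; since the two generators now differ not only in their first argument through $\bar Z^{n-1}$ versus $Z$ but also through the perturbed maximizer $\bar a$ versus $a$, the lemma delivers $\tilde q\|Z-\bar Z^{n-1}\|_{\hat{\mathbb H}^2_\gamma}^2$ plus an additive residual proportional to $\|\bar F(\bar Z^{n-1},\bar Z^n) - F(\bar Z^{n-1},\bar Z^n)\|_{\hat{\mathbb H}^2_\gamma}^2$, with the proportionality constant $1/\max(C\theta+\theta,K)$ reflecting the split Lipschitz bounds~\eqref{lipschitz F policy iteration} in the two $Z$-arguments of $F_s$.

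To bound the residual, Assumption~\ref{ass extra for pia} gives pointwise
\[
|\bar F_s(\bar Z^{n-1}_s,\bar Z^n_s) - F_s(\bar Z^{n-1}_s,\bar Z^n_s)| \le \sqrt\theta\,(1 + |\sigma^{-1}(s,X_s)\bar Z^n_s|)\,|\varepsilon(s,X_s,\bar Z^{n-1}_s)|\,,
\]
and factoring out $\varepsilon^{n}$ and identifying $\sigma^{-1}\bar Z^n = D_x\bar v^n$ converts the squared $\hat{\mathbb H}^2_\gamma$-norm into $\varepsilon^{n}E^{n}_{t,x}$ (the Radon--Nikodym factor $\mathcal E^{-1/2}$ inside $E^{n}$ being exactly what moves us from $\hat{\mathbb P}$ back to $\mathbb P$).

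Iterating this one-step recursion downwards $n$ times, the contraction on the two tracks $\|Z-Z^{k}\|_{\hat{\mathbb H}^2_\gamma}^2$ and $\|Z-\bar Z^{k}\|_{\hat{\mathbb H}^2_\gamma}^2$ produces the geometric $C(t,x)q^n$ term (with $C(t,x)$ absorbing the two initial defects $\|Z-Z^0\|_{\hat{\mathbb H}^2_\gamma}^2$ and $\|Z-\bar Z^0\|_{\hat{\mathbb H}^2_\gamma}^2$), while the summed residuals assemble the claimed $\sum_{k=1}^n q^{k}\varepsilon^{n-k+1}E^{n-k+1}_{t,x}$ with coefficient $2\theta/\max(C\theta+\theta,K)$. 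The main technical obstacle is the bookkeeping through Lemma~\ref{policy convergence estimate}: one must check that the split of the Lipschitz estimate for $F_s(z,Z)$ into the $C\theta+\theta$ piece (coefficient of $|z-z'|$) and the $K$ piece (coefficient of $|Z-Z'|$) flows through the contraction to give exactly the $1/\max(C\theta+\theta,K)$ factor on the residual, and that the doubling factor from the initial triangle inequality is absorbed into the eventual $q\in(0,1)$ (which requires the raw contraction constant $\tilde q$ produced by the lemma to be strictly less than $1/2$, an $\gamma$-tuning that is guaranteed by the same choice used in Theorem~\ref{main theorem alg 2}).
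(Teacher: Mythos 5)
Your proposal is correct and follows essentially the same route as the paper: the triangle inequality through the value-function BSDE $(Y,Z)$, the two-argument contraction with an additive residual proportional to $\|\bar F(\bar Z^{n-1},\bar Z^n)-F(\bar Z^{n-1},\bar Z^n)\|^2_{\hat{\mathbb H}^2_\gamma}$, the pointwise bound on $|\bar F_s - F_s|$ via Assumption~\ref{ass extra for pia}, and the downward iteration assembling the geometric term plus the weighted sum of residuals. The only cosmetic slip is that the residual-bearing estimate is supplied by Lemma~\ref{lem policy impr pert max} rather than by Lemma~\ref{policy convergence estimate} itself (which covers only the unperturbed pair of terms), but you state the content of the needed variant correctly, so the argument is sound.
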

\begin{proof} Let $X = X^{t,x,\alpha^\ast}$ be the solution to the SDE~\eqref{sde} started from $(t,x)$ and controlled by the optimal control process $\alpha^\ast$; see Remark~\ref{remark alpha star}. 
Due to Theorem \ref{main theorem alg 2} we can  write two BSDEs we get after the change of measure: first from the perturbation and second from the gradient iteration
\[
\bar{Y}_t^n=\xi+\int_{t}^{T}\bar{F}_s(\bar{Z}^{n-1}_s,\bar{Z}_s^n)\,ds-\int_{t}^T\bar{Z}_s^n\,d\widehat{W}_s\,,
\]
\[
Y^n_t=\xi+\int_{t}^TF_s(Z^{n-1}_s,Z^n_s)\, ds-\int_{t}^TZ^n_s\,d\widehat{W}_s\,,
\]
where
\[
\bar{F}_s(z,Z):=b^{\bar{a}(s,X_s,z)}(s,X_s)\sigma^{-1}(s,X_s)Z+f^{\bar{a}(s,X_s,z)}(s,X_s)\,,
\]
\[
F_s(z,Z):=b^{a(s,X_s,z)}(s,X_s)\sigma^{-1}(s,X_s)Z+f^{a(s,X_s,z)}(s,X_s)\,.
\]
Similarly, we want to study the difference of $(\bar{Y}^n,\bar{Z}^n)$ with $(Y^n,Z^n)$. Hence, notice that
\begin{equation}\label{eq: difference of Y bar and Y^n policy iteration}
\begin{split}
e^{\gamma t}\E|\bar{Y}^n_{t}-Y^n_{t}|^2&+\|\bar{Z}^n-Z^n\|^2_{\hat{\mathbb H}^2_\gamma}
\le 2e^{\gamma t}\E|Y_{t}-Y^n_{t}|^2+\|Z-Z^n\|^2_{\hat{\mathbb H}^2_\gamma}\\
&+2e^{\gamma t}\E|Y_t-\bar{Y}^n_{t}|^2+\|Z-\bar{Z}^n\|^2_{\hat{\mathbb H}^2_\gamma}\,,
\end{split}
\end{equation}
where $(Y,Z)$ solves~\eqref{main theorem alg 2}. Therefore, since
\begin{equation*}
\begin{split}
|F_s(Z_s,Z_s)-F_s(\bar{Z}^{n-1}_s,\bar{Z}^{n}_s)|\le \theta C|Z_s-\bar{Z}^{n-1}_s|+K|Z_s-\bar{Z}^{n}_s|+\theta|Z_s-\bar{Z}^{n-1}_s|\,,
\end{split}
\end{equation*}
we can apply Lemma \ref{lem policy impr pert max} so that there is $q\in(0,1)$ and $\gamma>0$ such that
\begin{equation}
\label{eq outcome of lemma pert max policy iteration}
\begin{split}
e^{\gamma t}\E|Y_t-\bar{Y}^n_{t}|^2&+\|Z-\bar{Z}^n\|^2_{\hat{\mathbb H}^2_\gamma}
\le q \|Z-\bar{Z}^{n-1}\|^2_{\hat{\mathbb H}^2_\gamma}\\
&+\frac{q}{\max(C\theta+\theta,K)}\|\bar{F}(\bar{Z}^{n-1},\bar{Z}^n)-F(\bar{Z}^{n-1},\bar{Z}^n)\|^2_{\hat{\mathbb H}^2_\gamma}.
\end{split}
\end{equation}
Now we need to estimate the second term of the RHS. Notice that by Assumption \ref{ass extra for pia} we have that
\begin{equation}
\label{eq diff of generators pert max policy iteration} 
\begin{split}
&|\bar{F}_s(\bar{Z}^{n-1}_s,\bar{Z}^{n}_s)-F_s(\bar{Z}^{n-1}_s,\bar{Z}^{n}_s)|\\
&\le\left|b^{\bar{a}(s,X_s,\bar{Z}^{n-1}_s)}(s,X_s)\sigma^{-1}(s,X_s)\bar{Z}^{n}_s-b^{a(s,X_s,\bar{Z}^{n-1}_s)}(s,X_s)\sigma^{-1}(s,X_s)\bar{Z}^{n}_s\right|\\
 &\qquad+\left|f^{\bar{a}(s,X_s,\bar{Z}^{n-1}_s)}(s,X_s)-f^{a(s,X_s,\bar{Z}^{n-1}_s)}(s,X_s)\right|\\
 &\le \sqrt{\theta}|\sigma^{-1}(s,X_s)\bar{Z}^{n}_s||\varepsilon(s,X_s,\bar{Z}^{n-1}_s)|+\sqrt{\theta}|\varepsilon(s,X_s,\bar{Z}^{n-1}_s)|\,.
\end{split}
\end{equation}
Hence by~\eqref{eq diff of generators pert max policy iteration} we have
\begin{equation}
\label{eq estimate for pert max policy iteration}
\|\bar{F}(\bar{Z}^{n-1},\bar{Z}^n)-F(\bar{Z}^{n-1},\bar{Z}^n)\|^2_{\hat{\mathbb H}^2_\gamma}\le \theta\|(1+|\sigma^{-1}(s,X)\bar{Z}^{n}|)\varepsilon(\cdot,X,\bar{Z}^{n-1})\|^2_{\hat{\mathbb H}^2_\gamma}\,.
\end{equation}
By inequalities~\eqref{eq: difference of Y bar and Y^n policy iteration},~\eqref{eq outcome of lemma pert max policy iteration},~\eqref{eq estimate for pert max policy iteration}, by the result of Theorem \ref{main theorem alg 2}, and by $\bar{Y}^{t,x,n}_t=\bar{v}^n(t,x)$, $Y^{t,x,n}_t=v^n(t,x)$ we conclude that
\begin{equation*}
\begin{split}
e^{\gamma t}&|\bar{v}^n(t,x)-v^n(t,x)|^2
\\
&\le 2e^{\gamma t}\hat{\E}|Y_t-Y^n_t|^2+\|Z-Z^n\|^2_{\hat{\mathbb H}^2_\gamma}+2e^{\gamma t}\hat{\E}|Y_t-\bar{Y}^n_t|^2+\|Z-\bar{Z}^n\|^2_{\hat{\mathbb H}^2_\gamma}\\
& \le 2q\|Z-Z^{n-1}\|^2_{\hat{\mathbb H}^2_\gamma}+2q\|Z-\bar{Z}^{n-1}\|^2_{\hat{\mathbb H}^2_\gamma}\\
&\qquad+\frac{2q}{\max(C\theta+\theta,K)}\|\bar{F}(\bar{Z}^{n-1})-F(\bar{Z}^{n-1})\|^2_{\hat{\mathbb H}^2_\gamma}\\
&\le 2q^n\|Z-Z^0\|^2_{\hat{\mathbb H}^2_\gamma}+2q^n\|Z-\bar Z^0\|^2_{\hat{\mathbb H}^2_\gamma}\\
&\qquad+ \sum_{k=1}^{n}q^{k}\frac{2\theta}{\max(C\theta+\theta,K)}\|(1+|\sigma^{-1}(\cdot,X)\bar{Z}^{n-k+1}|)\varepsilon(\cdot,X,\bar{Z}^{n-k})\|^2_{\hat{\mathbb H}^2_\gamma}\,.
\end{split}
\end{equation*}
\end{proof}

\section{Example}\label{sec example}
In this section we would like to consider an example when Assumptions~\ref{assumption controlled SDE}, \ref{assumption rewards},
\ref{ass convexity}, and~\ref{ass extra for pia} hold. Let $t\mapsto s(t)$ and $t\mapsto k(t)$ be continuous functions for $t\in[0,T]$. Consider the state which is governed by the controlled SDE
\begin{equation*}
dX_t=s(t)\sin \alpha_t\,dt+\sqrt{2}\,dW_t\,,\,\,\,t\in[0,T]\,,
\end{equation*}
and consider the cost functional
\begin{equation*}
J(t,x,\alpha)=\E\left[\int_t^Tk(s)\cos \alpha_s\,ds+g(X_T)\right]\,.
\end{equation*}
The aim is to maximize $J$ over admissible controls $\alpha\in\mathcal{A}$.
The value function $v=\sup_{\alpha\in\mathcal{A}}J(t,x,\alpha)$ satisfies the Bellman PDE
\begin{equation*}
\partial_t v+D_x^2v+\sup_{a\in A}\left[s(t)\sin a\, D_xv+k(t)\cos a\right]=0\,,\,\,\,\text{on}\,\,[0,T)\times\R\,,
\end{equation*}
with the terminal condition $v(T,x)=g(x):=\arctan(x)$. Hence, the optimal control is
\begin{equation*}
a(t,x)=\arctan\left(\frac{s(t)D_xv}{k(t)}\right)\,.
\end{equation*}
It is easy to check that Assumptions~\ref{assumption controlled SDE}, \ref{assumption rewards},
\ref{ass convexity}, and~\ref{ass extra for pia} hold for this problem.
Therefore, the Bellman PDE becomes
\begin{equation}\label{eq: pde for value function example}
\partial_t v+D_x^2v+\frac{\frac{(s(t)D_xv)^2}{k(t)}}{\sqrt{1+\left(\frac{s(t)D_xv}{k(t)}\right)^2}} +\frac{k(t)}{\sqrt{1+\left(\frac{s(t)D_xv}{k(t)}\right)^2}}=0\,.
\end{equation}

We can solve this problem using the policy improvement algorithm by approximating the Bellman PDE with a sequence of linear PDEs:

\textit{Step} 1. Make an initial choice of control $a^0(t,x)$.

\textit{Step} 2. For $n=0,1,\dots$:
\begin{itemize}
	\item Evaluation step: Find a solution $v^n=v^n(t,x)$ to the linear PDE
	\begin{equation}\label{eq: pde for v^n example}
	\partial_t v^n+D_x^2 v^n+s(t)\sin a^nD_xv^n+\cos a^n=0\,.
	\end{equation}
	\item Improvement step: Find a new policy $a^{n+1}=a^{n+1}(t,x)$ such that
	\begin{equation*}
	a^{n+1}(t,x)=\arctan\left(\frac{s(t)D_xv^n}{k(t)}\right)\,.
	\end{equation*}
\end{itemize} 

\textit{Step} 3. Iterate the process until no changes occur in the controls updates.

One can do similar calculations in the case of the gradient iteration algorithm.

We will solve~\eqref{eq: pde for value function example} and~\eqref{eq: pde for v^n example} by the finite difference method. For simplicity, let us choose $s(t)=1$ and $k(t)=1$ for all $t\in[0,T]$. In Figure~\ref{plot for errors example}, one can see the logarithm of the error between the value function obtained by the iterative methods, by the policy improvement algorithm, and by the gradient iteration algorithm at every step and the value function obtained by the solution of the Bellman PDE. This shows the fast convergence of the policy improvement method for our example in one dimension. In Figure~\ref{figure from policy iteration}, we can see that after only a few steps the policies obtained from the policy improvement algorithm are close to the exact one. Finally, in Figure~\ref{figure from the true solution of the example}, we plot the value function and the policy from the solution of the Bellman PDE.

\begin{figure}[h]
\centering
\minipage{0.49\textwidth}
\includegraphics[width=\linewidth]{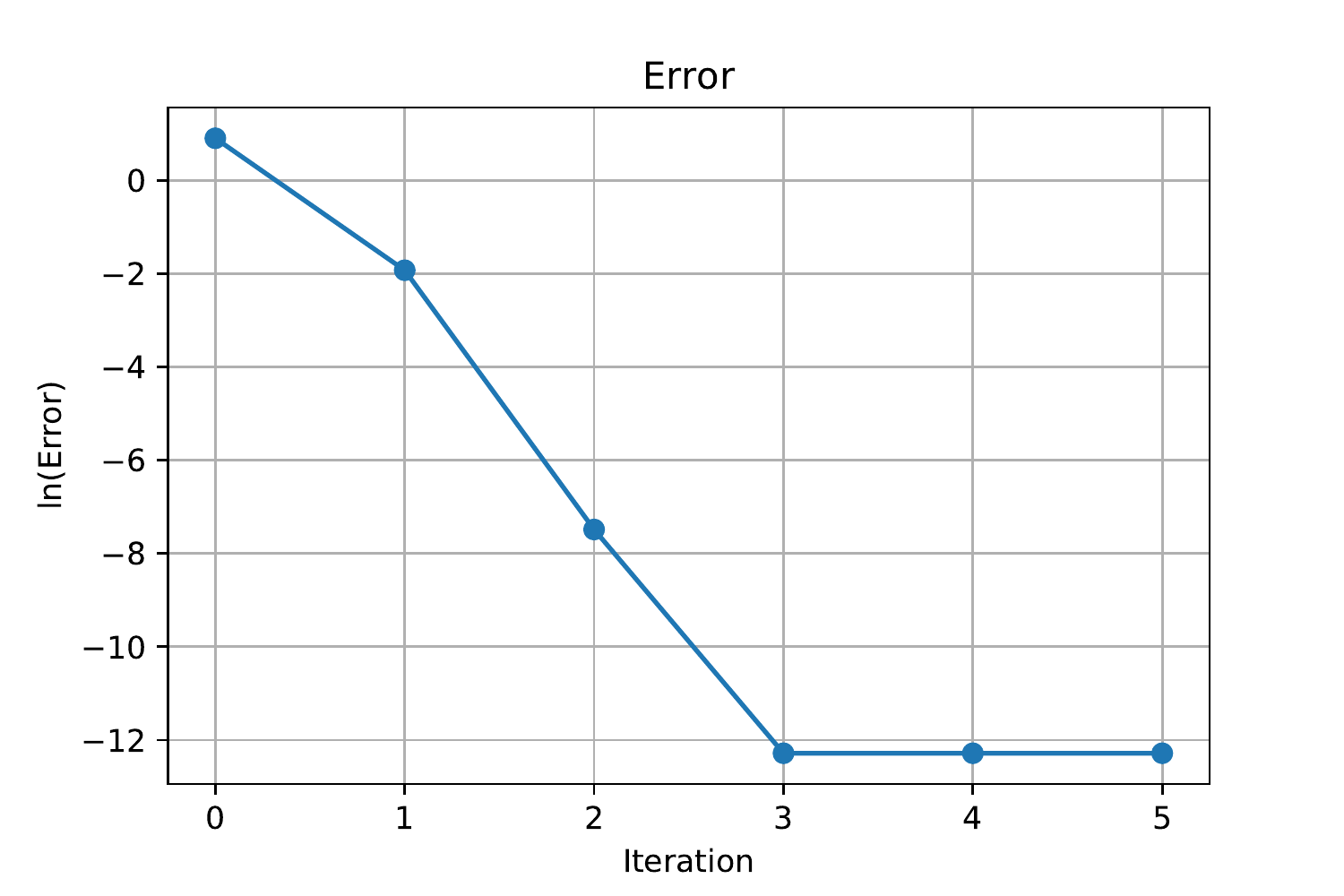}
\endminipage\hfill
\minipage{0.49\textwidth}
\includegraphics[width=\linewidth]{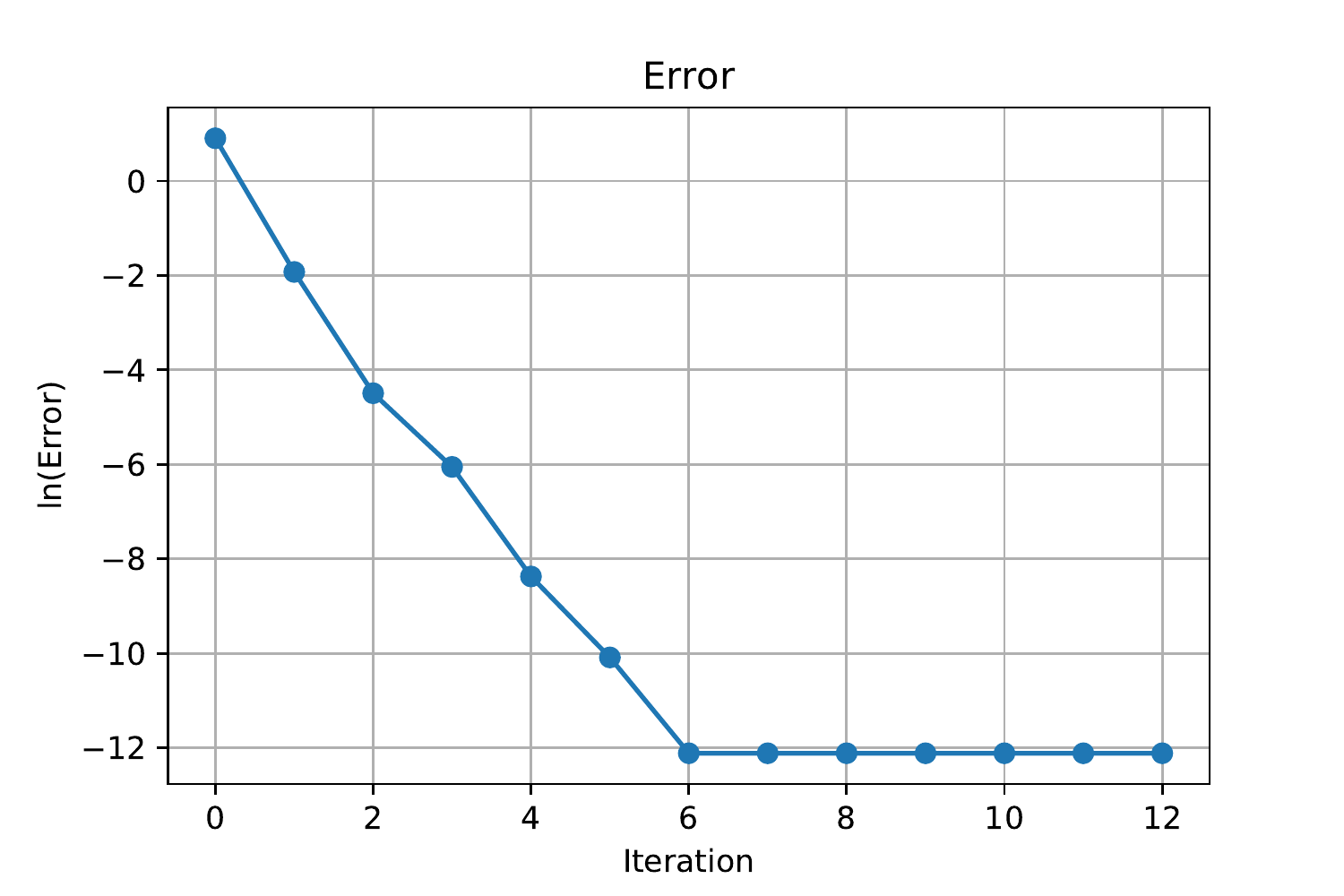}
\endminipage
\caption{\label{plot for errors example} Plot for the logarithm of the error between the value function obtained by the iterative methods, by the policy improvement, and by the gradient iteration algorithms (from left to right) at every step and the value function obtained by the solution of the Bellman PDE.
Note that the convergence stops once we have reached the accuracy of the finite-difference solver.
}	
\end{figure}

%\begin{figure}[h]\label{plot for errors example}
%	\centering
%	\minipage{0.24\textwidth}
%	\includegraphics[width=\linewidth]{}
%	\endminipage\hfill
%	\minipage{0.24\textwidth}
%	\includegraphics[width=\linewidth]{images/error_plot_policy_iteration_true.pdf}
%	\endminipage\hfill
%	\minipage{0.24\textwidth}%
%	\includegraphics[width=\linewidth]{}
%	\endminipage
%	\minipage{0.24\textwidth}%
%	\includegraphics[width=\linewidth]{images/error_plot_gradient_iteration_true.pdf}
%	\endminipage
%	%\includegraphics[width=0.5\textwidth]{} 
%	\caption{Plot for the logarithm of the error between the value function obtained by the iterative method at every step and the value function obtained by the solution of the Bellman PDE.}	
%\end{figure}

\begin{figure}[!htb]
	\minipage{0.32\textwidth}
	\includegraphics[width=\linewidth]{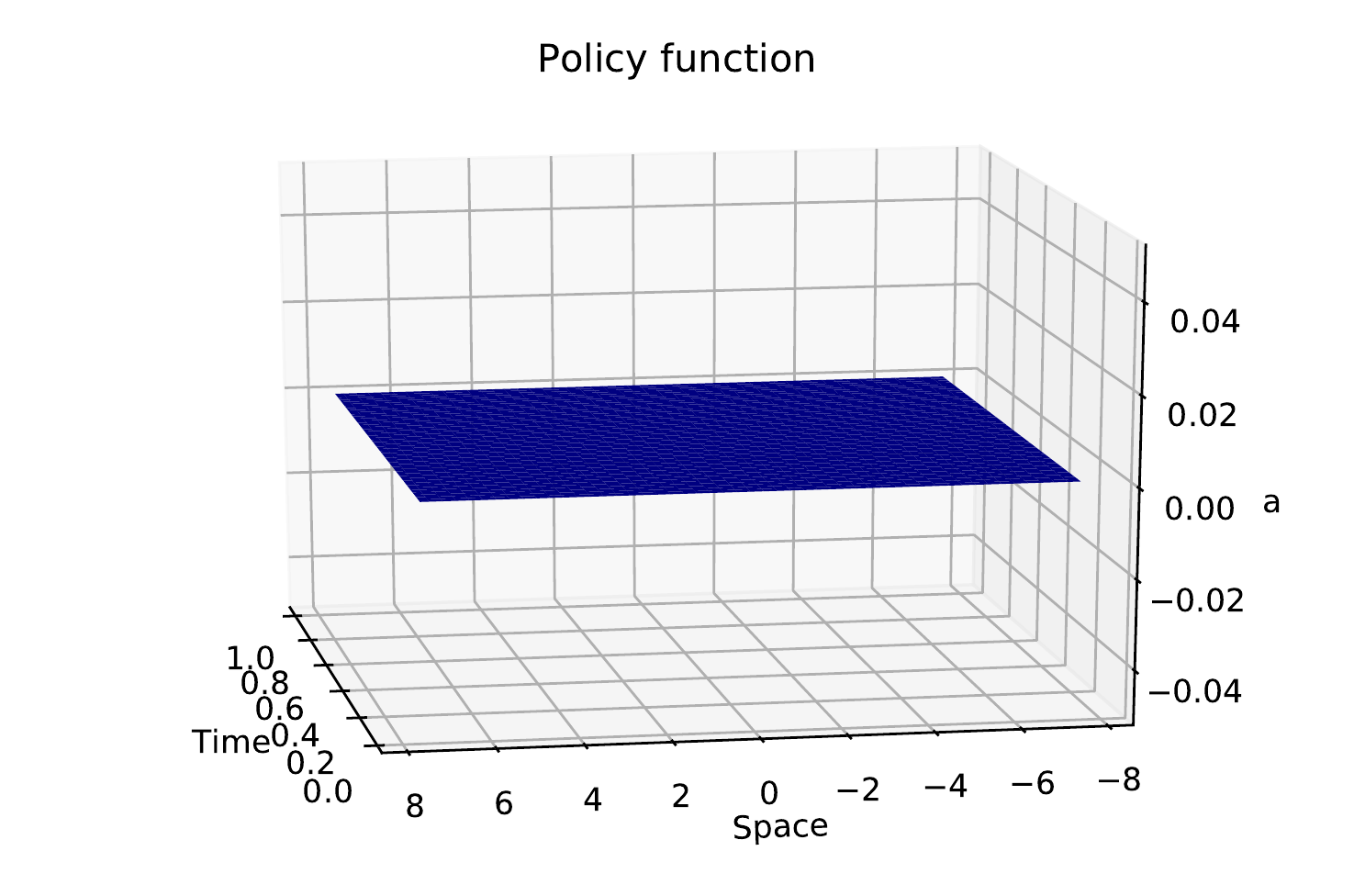}
	\endminipage\hfill
	\minipage{0.32\textwidth}
	\includegraphics[width=\linewidth]{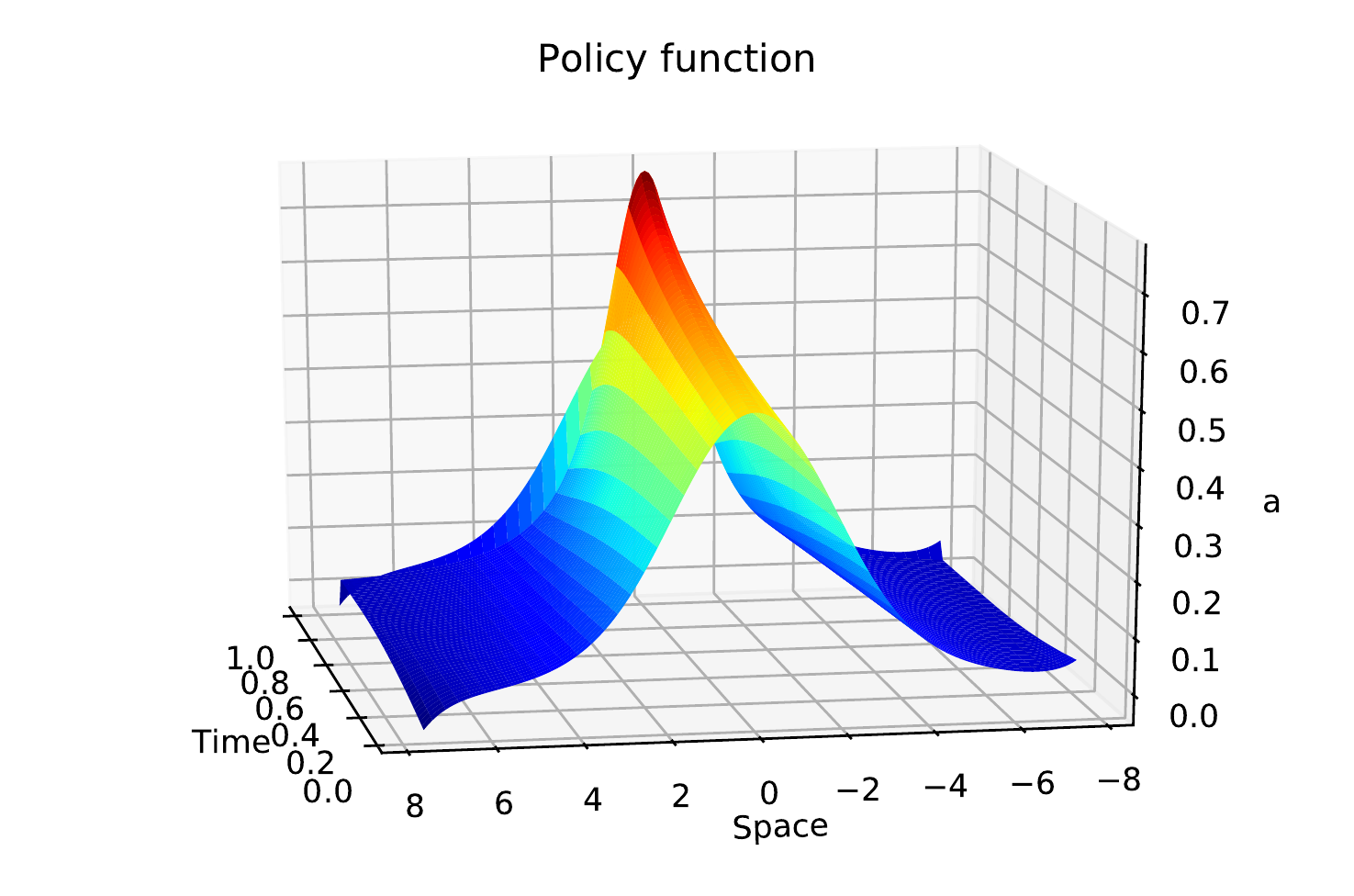}
	\endminipage\hfill
	\minipage{0.32\textwidth}%
	\includegraphics[width=\linewidth]{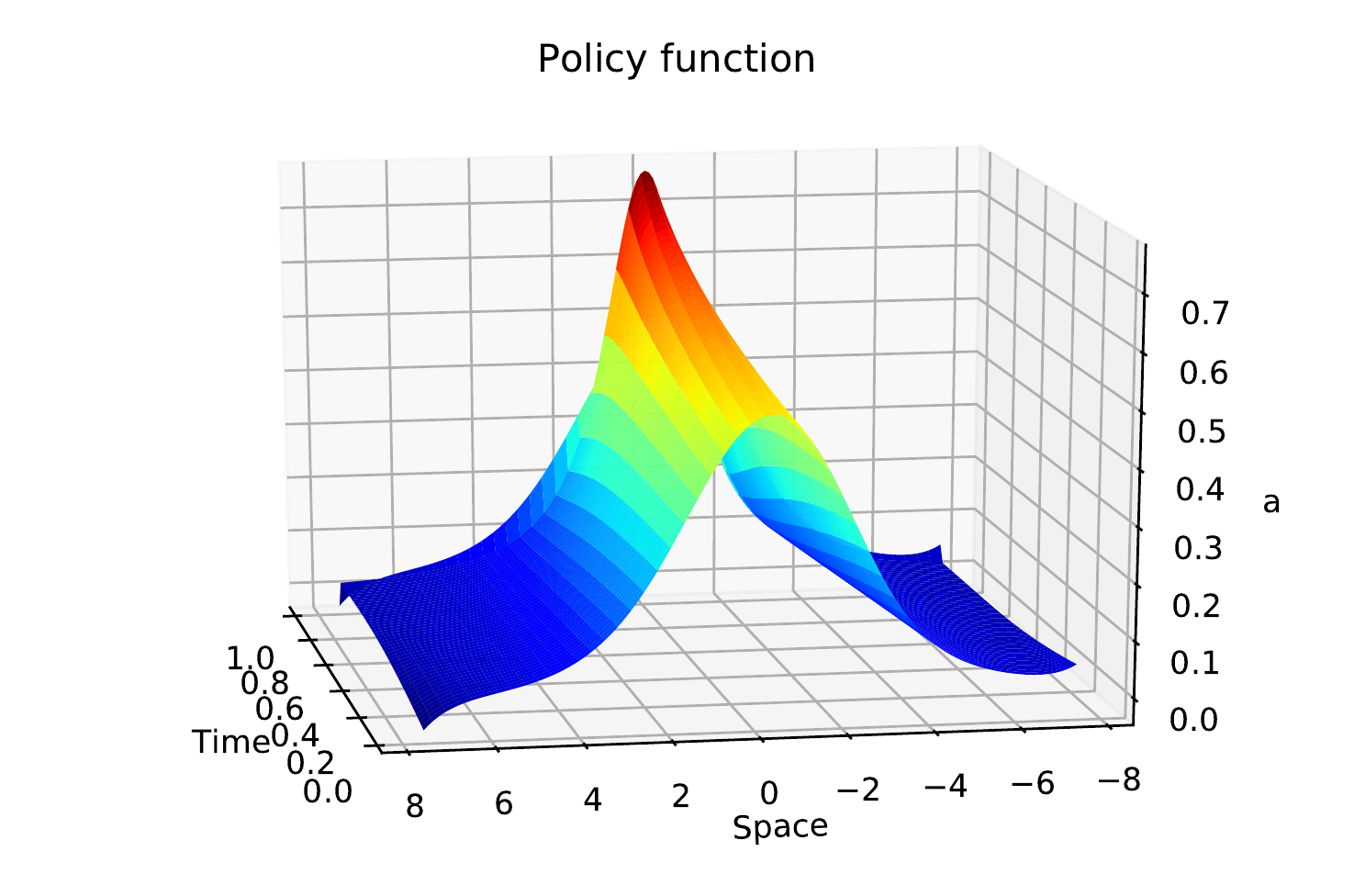}
	\endminipage
	\caption{\label{figure from policy iteration}Plot of the initial policy and policies obtained by the policy improvement algorithm at Steps 1 and 5}
\end{figure}

\begin{figure}[!htb]
	\minipage{0.49\textwidth}
	\includegraphics[width=\linewidth]{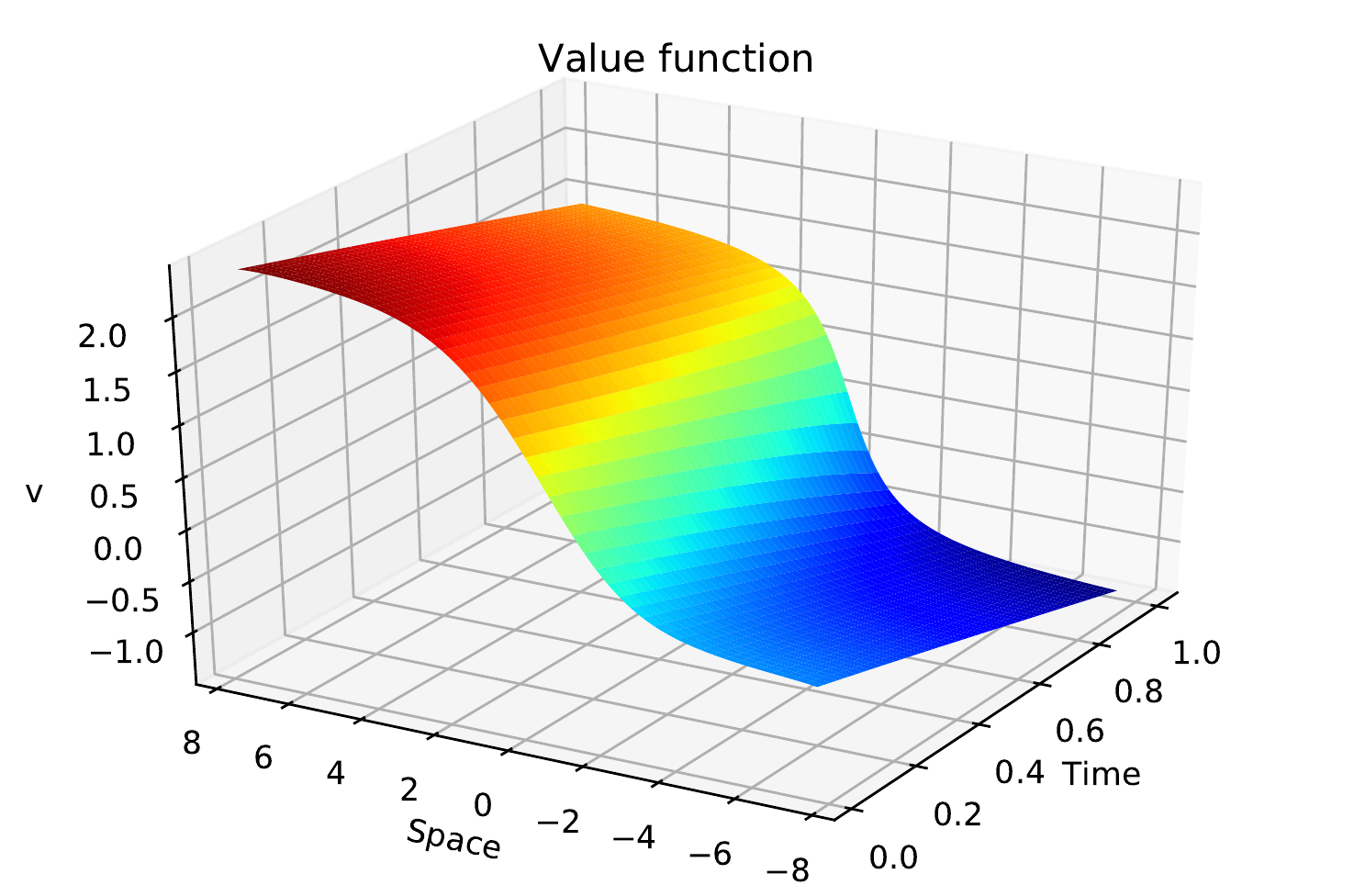}
	\endminipage\hfill
	\minipage{0.49\textwidth}
	\includegraphics[width=\linewidth]{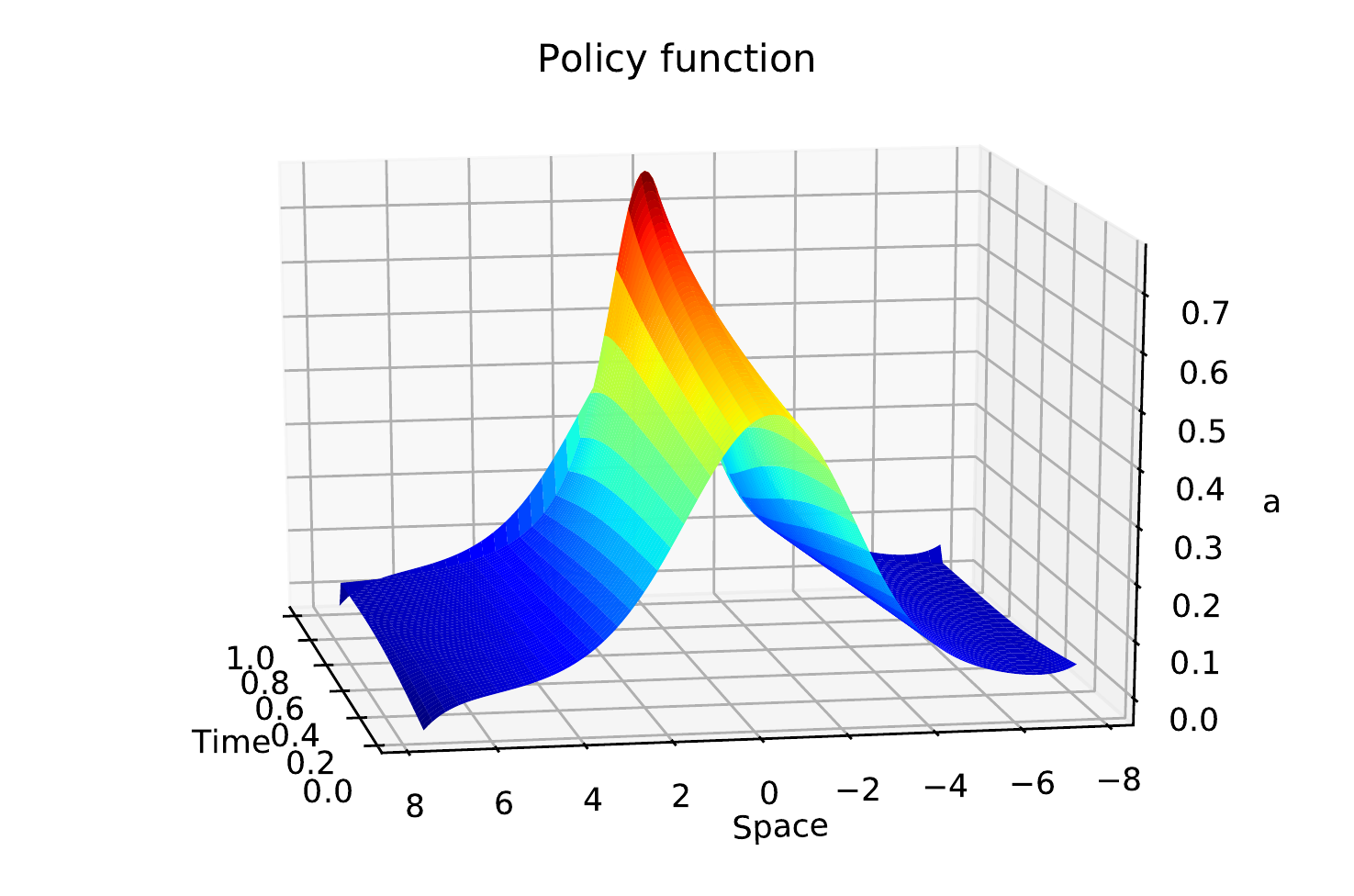}
	\endminipage\hfill
	\caption{\label{figure from the true solution of the example}Plot of the true value function and the policy}
\end{figure}

\appendix

\section{Some results from theory of BSDEs}
\label{sec BSDEs}

We fix a finite horizon $T\in (0,\infty)$.
We fix a filtered probability space $(\Omega, \mathcal{F}, \F=(\mathcal{F}_t)_{0\le t\le T}, \Prob)$.
Let there be a $d'$-dimensional Wiener martingale on this space. 

\begin{lemma}
\label{bsde well defined}
Let $F:\Omega \times [0,T] \times \mathbb R^d \to \mathbb R$ be a measurable function that
satisfies the following conditions: The process $(F_t(0))_{t\in [0,T]}$ is in $\mathbb H^2$.  
Moreover there is a constant $\theta>0$ such that   
\begin{equation*}
|F_t(z)| 
\leq |F_t(0)|+\theta|z|\,\,\,\forall z\in\R^d, t\in[0,T],\,\,a.s.
\end{equation*}
Then, for every $\xi\in L^2(\Omega, \mathcal F_T)$ and $z\in \mathbb{H}^2$,  there is a unique solution $(Y,Z) \in \mathcal S^2 \times \mathbb H^2$ to
\begin{equation}
\label{bsde in lemma}
Y_t=\xi+\int_{t}^{T}F_s(z_s)\,ds-\int_{t}^{T}Z_s\, d W_s,\,\,t\in[0,T],\,{\mathbb{P}}\text{-}\text{a.s}.	
\end{equation}
\end{lemma}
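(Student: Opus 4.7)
The plan is to observe that, because the generator $F_s(z_s)$ does not depend on the unknown pair $(Y,Z)$ (the process $z$ is a fixed input), the statement is not really a fixed-point problem but a direct consequence of the martingale representation theorem for Brownian filtrations. The only technical points are to check integrability of the forcing term and to verify that the representing integrand lies in $\mathbb{H}^2$.

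First I would set $\eta := \xi + \int_0^T F_s(z_s)\,ds$ and argue that $\eta \in L^2(\Omega, \mathcal F_T)$. Indeed, the growth hypothesis $|F_s(z_s)| \leq |F_s(0)| + \theta|z_s|$ combined with Cauchy--Schwarz gives
\begin{equation*}
\E\Big(\int_0^T |F_s(z_s)|\,ds\Big)^2 \leq 2T\,\E\int_0^T \big(|F_s(0)|^2 + \theta^2|z_s|^2\big)\,ds < \infty,
\end{equation*}
since $F_\cdot(0)$ and $z$ are both in $\mathbb{H}^2$, and $\xi$ is in $L^2$ by assumption.

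Next I would define the square-integrable martingale $M_t := \E[\eta\mid\mathcal F_t]$ and apply the Brownian martingale representation theorem to obtain $Z \in \mathbb{H}^2$ with $M_t = M_0 + \int_0^t Z_s\,dW_s$. Setting $Y_t := M_t - \int_0^t F_s(z_s)\,ds$, a direct calculation shows $Y_T = \xi$ and
\begin{equation*}
Y_t = \xi + \int_t^T F_s(z_s)\,ds - \int_t^T Z_s\,dW_s,
\end{equation*}
so $(Y,Z)$ solves \eqref{bsde in lemma}. Continuity of $Y$ is inherited from the continuous version of $M$, and the bound $\|Y\|_{\mathcal S^2} < \infty$ follows from Doob's maximal inequality applied to $M$ together with the $L^2$ bound on the forcing term obtained above.

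For uniqueness, given two solutions $(Y^i,Z^i)$ the difference satisfies $Y^1_t - Y^2_t = -\int_t^T (Z^1_s - Z^2_s)\,dW_s$ with $Y^1_T - Y^2_T = 0$. Taking $\mathcal F_t$-conditional expectations gives $Y^1 = Y^2$ indistinguishably, and then It\^o isometry forces $Z^1 = Z^2$ in $\mathbb{H}^2$. There is no genuine obstacle in this proof; the only mild care needed is in the integrability estimate for $\eta$ and in arguing that the pathwise continuous version of $Y$ sits in $\mathcal S^2$ rather than merely in $L^2$ for each $t$.
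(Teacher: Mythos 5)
Your proof is correct, but it takes a different route from the paper: the paper disposes of this lemma in one line by citing the general existence and uniqueness theorem for BSDEs with Lipschitz generators (Pham, Theorem 6.2.1), which is proved by a Banach fixed-point argument, whereas you exploit the specific structure here --- the generator $F_s(z_s)$ is a frozen process once $z\in\mathbb H^2$ is fixed, so no fixed point is needed --- and construct the solution directly via the martingale representation theorem. Your argument is in fact exactly the building block hidden inside the cited theorem (it is the map $\Phi$ of Lemma~\ref{lem classic bsde iteration} evaluated at a single $z$), so it is more elementary and more transparent about why the result holds; the citation buys brevity and covers the harder case where the generator genuinely depends on $(Y,Z)$. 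All the individual steps check out: the $L^2$ bound on $\eta$ via Cauchy--Schwarz and the linear growth of $F$, the identity $Y_T=\xi$, the $\mathcal S^2$ bound via Doob, and the uniqueness argument via conditioning and the It\^o isometry. The one hypothesis you use silently is that $\mathbb F$ is the (augmented) filtration generated by $W$, without which the martingale representation theorem is unavailable; the paper never states this explicitly either, but it is implicit in its reliance on Pham's framework, so this is a shared assumption rather than a gap in your argument.
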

\begin{proof}
This follows immediately from, e.g., Pham~\cite[Theorem 6.2.1]{pham book}. 	
\end{proof}

\begin{lemma}
\label{lem classic bsde iteration}
Let $F:\Omega \times [0,T] \times \mathbb R^d \to \mathbb R$ satisfy the hypothesis of Lemma~\ref{bsde well defined}.
Fix $\xi \in L^2(\Omega, \mathcal F_T)$.
Let $\Phi : \mathbb{H}^2 \ni z\mapsto (Y,Z)\in  \mathcal S^2\times {\mathbb{H}}^2$, where $(Y,Z)$ is the unique solution to~\eqref{bsde in lemma}. Moreover assume that for $z^1, z^2\in \mathbb H^2$ the following condition satisfies that there is a constant $\theta>0$ such that   
\begin{equation}
\label{eq property of F in the lemma bsde well defined}
|F_t(z^1_t) - F_t(z^2_t)| 
\leq \theta|z^1_t-z^2_t|\,,\,\,\,t\in[0,T],\,\,a.s.
\end{equation}
Then there is $\gamma > 0$ and $q\in (0,1)$ such that for $(Y^i,Z^i) := \Phi(z^i)$, $i=1,2$, 
and any $t\in [0,T]$ we have
\begin{equation*}
\E\left[e^{\gamma t}|Y^1_t-Y^2_t|^2\right]+\|Z^1-Z^2\|^2_{\mathbb H^2_\gamma}\le q\|z^1-z^2\|^2_{\mathbb H^2_\gamma}\,.
\end{equation*}
\end{lemma}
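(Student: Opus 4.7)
The plan is to use a standard weighted $L^2$ a priori estimate on the difference of the two BSDEs, exploiting the fact that the driver is independent of $Z$ and only Lipschitz in the ``input'' process $z$.

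Write $\delta Y := Y^1 - Y^2$, $\delta Z := Z^1 - Z^2$, $\delta z := z^1 - z^2$, and $\delta F_s := F_s(z^1_s) - F_s(z^2_s)$. Since the two BSDEs share the same terminal condition $\xi$, the pair $(\delta Y, \delta Z)$ solves the linear BSDE
\[
\delta Y_t = \int_t^T \delta F_s\, ds - \int_t^T \delta Z_s\, dW_s, \qquad \delta Y_T = 0,
\]
with $\delta Y \in \mathcal{S}^2$ and $\delta Z \in \mathbb{H}^2$ by Lemma~\ref{bsde well defined}. I would apply It\^o's formula to $e^{\gamma s}|\delta Y_s|^2$ on $[t,T]$ (for any $\gamma > 0$ to be chosen) and rearrange using $\delta Y_T = 0$ to obtain
\[
e^{\gamma t}|\delta Y_t|^2 + \int_t^T e^{\gamma s}\left(\gamma|\delta Y_s|^2 + |\delta Z_s|^2\right)ds = 2\int_t^T e^{\gamma s}\delta Y_s \delta F_s\, ds - 2\int_t^T e^{\gamma s}\delta Y_s \delta Z_s\, dW_s.
\]

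Next I would argue that the stochastic integral is a true martingale: by Burkholder--Davis--Gundy and Cauchy--Schwarz, its quadratic variation is bounded by $e^{2\gamma T}\,\mathbb{E}\bigl[\sup_{s\le T}|\delta Y_s|^2 \int_0^T|\delta Z_s|^2 ds\bigr]^{1/2}$, which is finite thanks to $\delta Y \in \mathcal{S}^2$ and $\delta Z \in \mathbb{H}^2$. Taking expectations and using Young's inequality $2ab \le \gamma a^2 + \gamma^{-1}b^2$ together with the Lipschitz hypothesis~\eqref{eq property of F in the lemma bsde well defined} (giving $|\delta F_s|^2 \le \theta^2|\delta z_s|^2$), the $\gamma|\delta Y_s|^2$ term absorbs the cross term and I obtain
\[
\mathbb{E}\bigl[e^{\gamma t}|\delta Y_t|^2\bigr] + \mathbb{E}\int_t^T e^{\gamma s}|\delta Z_s|^2\, ds \le \frac{\theta^2}{\gamma}\,\mathbb{E}\int_t^T e^{\gamma s}|\delta z_s|^2\, ds \le \frac{\theta^2}{\gamma}\|\delta z\|_{\mathbb{H}^2_\gamma}^2.
\]

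Specialising the above to $t=0$ yields $\|\delta Z\|_{\mathbb{H}^2_\gamma}^2 \le (\theta^2/\gamma)\|\delta z\|_{\mathbb{H}^2_\gamma}^2$; adding this to the pointwise-in-$t$ bound on $\mathbb{E}[e^{\gamma t}|\delta Y_t|^2]$ gives
\[
\mathbb{E}\bigl[e^{\gamma t}|\delta Y_t|^2\bigr] + \|\delta Z\|_{\mathbb{H}^2_\gamma}^2 \le \frac{2\theta^2}{\gamma}\,\|\delta z\|_{\mathbb{H}^2_\gamma}^2,
\]
uniformly in $t\in[0,T]$. Finally I would pick any $\gamma > 2\theta^2$ and set $q := 2\theta^2/\gamma \in (0,1)$ to conclude. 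The calculation is essentially the Bismut-type weighted estimate; there is no real obstacle, only the standard technicality of justifying that the martingale has zero expectation, which the a priori integrability of $(\delta Y, \delta Z)$ delivers immediately.
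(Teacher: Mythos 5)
Your proof is correct and follows essentially the same route as the paper's: It\^o's formula applied to $e^{\gamma s}|\delta Y_s|^2$, the martingale term killed via the BDG-based integrability argument, Young's inequality to absorb the cross term into $\gamma|\delta Y_s|^2$, and a choice of $\gamma$ large relative to $\theta^2$. Your final step of taking $t=0$ to control $\|\delta Z\|^2_{\mathbb H^2_\gamma}$ over all of $[0,T]$ and then adding the pointwise $Y$-bound (yielding $q=2\theta^2/\gamma$ rather than the paper's $\theta^2/\gamma$) is in fact a slightly more careful bookkeeping of the same estimate.
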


The proof is well known and is included, e.g., as part of Pham~\cite[Proof of Theorem 6.2.1]{pham book}. 
We provide it here for the convenience of the reader and before we proceed
we need to make the following observation.
\begin{remark}
\label{rem stoch int unif int}
Assume that $Y\in \mathcal S^2$ and $Z\in \mathbb H^2$ and let 
\[
M_t:=\int_{0}^{t}e^{\gamma s} Z_s \, Y_s\, dW_s\,.
\]
Then $\sup_{t\le T}|M_t|\in L^1(\Omega, \mathcal F_T)$ and hence $M_t$ is a uniformly integrable martingale. 
Indeed, from the Burkholder--Davis--Gundy inequality and the Young inequality we get
\begin{equation*}
\begin{split}
\E\left[\sup_{t\le T}|M_t|\right]&\le C_1\E\left[\left(\int_{0}^{T}e^{2\gamma s}|Y_s|^2|Z_s|^2ds\right)^{1/2}\right]\\
&\le e^{\gamma T}C_1\E\left[\left(\sup_{t\le T}|Y_t|^2\int_{0}^{T}|Z_s|^2ds\right)^{1/2}\right]\\
&\le \frac{e^{\gamma T}}{2}C_1\E\left[\sup_{t\le T}|Y_t|^2+\int_{0}^{T}|Z_s|^2ds\right]<\infty.
\end{split}
\end{equation*}
\end{remark}

\begin{proof}[Proof of Lemma~\ref{lem classic bsde iteration}]
Consider $\gamma > 0$ which we will fix later. We denote  $\delta z:=z^1-z^2$, $\delta Z:=Z^1-Z^2$, $\delta Y:=Y^1-Y^2$, and $\delta F:=F(z^1)-F(z^2)$. We then apply It\^o's formula to $e^{\gamma t}|\delta Y_t|^2$:
\begin{equation*}
\begin{split}
e^{\gamma t}|\delta Y_t|^2+\int_t^T e^{\gamma s}|\delta Z_s|^2ds 
& =\int_{t}^{T}e^{\gamma s}(2\delta Y_s\, \delta F_s-\gamma |\delta Y_s|^2)ds\\&-2\int_{t}^{T}e^{\gamma s}\delta Z_s\,\delta Y_s \, dW_s\,.
\end{split}
\end{equation*}
Due to Remark~\ref{rem stoch int unif int}, the stochastic integral vanishes by taking expectation. 
Hence
\begin{equation*}
\begin{split}
\E\left[e^{\gamma t}|\delta Y_t|^2+\int_t^T e^{\gamma s}|\delta Z_s|^2ds\right] =\E\left[\int_{t}^{T}e^{\gamma s}(2\delta Y_s \, \delta F_s-\gamma |\delta Y_s|^2)\,ds\right]\,.
\end{split}
\end{equation*}	
By the Lipschitz property of the generator and by the Young inequality we continue our estimate, noting that for any $\varepsilon>0$, we have
\begin{equation*}
\begin{split}
\E\Bigg[e^{\gamma t}|\delta Y_t|^2+ & \int_t^T e^{\gamma s}|\delta Z_s|^2\,ds\Bigg]\le  \E\left[\int_{t}^{T}e^{\gamma s}(2\theta|\delta Y_s||\delta z_s|-\gamma |\delta Y_s|^2)ds\right]\\
&\le \E\left[\int_{t}^{T}e^{\gamma s}\left(\theta(\varepsilon|\delta Y_s|^2+\varepsilon^{-1}|\delta z_s|^2)-\gamma |\delta Y_s|^2\right)ds\right]\,.
\end{split}
\end{equation*}
Choose $\varepsilon$ such that $\gamma=\varepsilon\theta$. Thus
\begin{equation}
\begin{split}
\label{estimate in contraction value iteration}
\E\left[e^{\gamma t}|\delta Y_t|^2+\int_t^T e^{\gamma s}|\delta Z_s|^2ds\right]&\le \E\left[\int_{t}^{T}e^{\gamma s}\left(\theta\varepsilon^{-1}|\delta z_s|^2\right)ds\right]\le \frac{\theta^2}{\gamma} \|\delta z\|^2_{\mathbb H^2_\gamma}.
\end{split}
\end{equation}
Hence, from~\eqref{estimate in contraction value iteration} we have that for $\gamma>  \theta^2$ and any $t\in [0,T]$ 
\begin{equation*}
\E\left[e^{\gamma t}|Y^1_t-Y^2_t|^2\right]+\|Z^1-Z^2\|^2_{\mathbb H^2_\gamma}\le q\|z^1-z^2\|^2_{\mathbb H^2_\gamma}\,,
\end{equation*}
where $q=\frac{\theta^2}{\gamma}\in(0,1)$. This concludes the proof of the lemma.
\end{proof}

\begin{lemma}
\label{policy well defined}
Let $F: \Omega\times[0,T]\times \mathbb R^d\times \mathbb R^d \to \mathbb R$ be a measurable function such that the process $(F_t(0,0))_{t\in[0,T]}$ is in $\mathbb H^2$ and such that there are $\theta, K > 0$ so that for all $t\in [0,T]$, $z,Z\in \mathbb R^d$ we have
\[
|F_t(z,Z)| \leq|F_t(0,0)|+ \theta|z|+K|Z|\,\,\, {\text a.s.}
\]

If $\xi \in L^2(\Omega, \mathcal F_T)$ and $z\in \mathbb H^2$, then there is a unique solution $(Y,Z) \in \mathcal S^2 \times \mathbb H^2$ to
\begin{equation}
\label{eq fixed point pia}
Y_t=\xi + \int_{t}^{T}F_s(z_s,Z_s)\, ds - \int_{t}^{T}Z_s\,dW_s\,,\,\,\,t\in[0,T]\,,\,\,\mathbb{P}\text{-}a.s.
\end{equation}
\end{lemma}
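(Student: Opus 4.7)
The plan is to reduce to the one-variable BSDE result Lemma~\ref{bsde well defined} via a Picard iteration on the $Z$-argument of the generator, using the equivalent family of norms $\|\cdot\|_{\mathbb{H}^2_\gamma}$ to obtain a Banach contraction, exactly in the spirit of Lemma~\ref{lem classic bsde iteration} above. Nothing more exotic is needed because the process $z$ is only a parameter, not part of the unknown.

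First I would freeze $z \in \mathbb{H}^2$ and set $\tilde{F}_t(Z) := F_t(z_t, Z)$. From the hypothesis we have $|\tilde{F}_t(0)| \leq |F_t(0,0)| + \theta|z_t|$, and since $F_\cdot(0,0) \in \mathbb{H}^2$ by assumption and $z \in \mathbb{H}^2$, the triangle inequality gives $\tilde{F}_\cdot(0) \in \mathbb{H}^2$. For any candidate $\zeta \in \mathbb{H}^2$ the process $s\mapsto \tilde F_s(\zeta_s)$ is again in $\mathbb{H}^2$ by the growth bound in $Z$, so Lemma~\ref{bsde well defined} (applied with the deterministic-in-$Z$ driver $G_s:=\tilde F_s(\zeta_s)$) yields a unique pair $(Y^\zeta, Z^\zeta) \in \mathcal{S}^2\times \mathbb{H}^2$ with
\begin{equation*}
Y^\zeta_t = \xi + \int_t^T \tilde{F}_s(\zeta_s)\, ds - \int_t^T Z^\zeta_s\, dW_s, \qquad t\in[0,T].
\end{equation*}
This defines a Picard map $\Psi : \mathbb{H}^2 \to \mathbb{H}^2$, $\Psi(\zeta) := Z^\zeta$.

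Next I would prove $\Psi$ is a strict contraction in $\|\cdot\|_{\mathbb{H}^2_\gamma}$ for $\gamma$ sufficiently large. Applying It\^o's formula to $e^{\gamma t}|Y^{\zeta^1}_t - Y^{\zeta^2}_t|^2$ for two inputs $\zeta^1, \zeta^2$, taking expectations (the stochastic integral is a true martingale by the Burkholder--Davis--Gundy argument of Remark~\ref{rem stoch int unif int}), using the Lipschitz estimate $|\tilde F_s(\zeta^1_s) - \tilde F_s(\zeta^2_s)|\le K|\zeta^1_s - \zeta^2_s|$ and Young's inequality, one reproduces the computation of Lemma~\ref{lem classic bsde iteration} to obtain
\begin{equation*}
\E\bigl[e^{\gamma t}|Y^{\zeta^1}_t - Y^{\zeta^2}_t|^2\bigr] + \|\Psi(\zeta^1) - \Psi(\zeta^2)\|^2_{\mathbb{H}^2_\gamma} \le \frac{K^2}{\gamma}\,\|\zeta^1 - \zeta^2\|^2_{\mathbb{H}^2_\gamma}.
\end{equation*}
Choosing $\gamma>K^2$ makes $\Psi$ a contraction on the complete space $(\mathbb{H}^2,\|\cdot\|_{\mathbb{H}^2_\gamma})$, and the Banach fixed point theorem produces a unique $Z\in\mathbb{H}^2$ with $\Psi(Z)=Z$. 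The corresponding $Y$ lies in $\mathcal S^2$ by the standard a priori estimate and solves~\eqref{eq fixed point pia}; uniqueness of the pair $(Y,Z)$ follows from the same It\^o--Young estimate applied to any two candidate solutions.

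The only real obstacle is bookkeeping: the hypothesis as written is a growth bound, but what I really use at each Picard step is the Lipschitz estimate $|F_t(z, Z_1) - F_t(z, Z_2)|\le K|Z_1-Z_2|$, which is the natural two-variable analogue of the hypothesis of Lemma~\ref{bsde well defined} and is implicit in the way $F$ is constructed in the applications of this lemma throughout Sections~\ref{sec conv pia}, \ref{sec stability lin PDE}, and \ref{sec stability max}. With that reading in place, every step above is a direct transcription of the classical Pardoux--Peng argument.
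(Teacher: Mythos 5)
Your proposal is correct and is essentially the paper's own proof made explicit: the paper disposes of this lemma with a one-line citation to Pham~\cite[Theorem 6.2.1]{pham book}, and your Picard-iteration-with-weighted-norm argument is precisely the standard proof of that theorem. Your observation that the stated growth hypothesis must in effect be read as (or supplemented by) the Lipschitz bound $|F_t(z,Z_1)-F_t(z,Z_2)|\le K|Z_1-Z_2|$ is well taken --- the growth condition alone would not give uniqueness --- and it is consistent with how the lemma is actually used, since the companion Lemma~\ref{policy convergence estimate} imposes the Lipschitz condition explicitly and every generator to which the lemma is applied in the paper is affine in $Z$ with coefficient bounded by $K$.
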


\begin{proof}
This follows immediately from, e.g., Pham~\cite[Theorem 6.2.1]{pham book}.
\end{proof}

\begin{lemma}
\label{policy convergence estimate}
Let $F: \Omega\times[0,T]\times \mathbb R^d\times \mathbb R^d \to \mathbb R$ satisfy the hypothesis 
of Lemma~\ref{policy well defined}.
Fix $\xi \in L^2(\Omega, \mathcal F_T)$.
Let $\Phi : \mathbb{H}^2 \ni z\mapsto (Y,Z)\in  \mathcal S^2\times {\mathbb{H}}^2$, where $(Y,Z)$ is the unique solution to~\eqref{eq fixed point pia}. Moreover assume that for $z^1, z^2\in \mathbb H^2$ the following condition satisfies that there are  constants $\theta,K>0$ such that   
\begin{equation*}
|F_t(z^1_t,Z^1_t) - F_t(z^2_t,Z^2_t)| 
\leq \theta|z^1_t-z^2_t|+K|Z^1_t-Z^2_t|\,,\,\,\,t\in[0,T] \,,\,\,a.s.,
\end{equation*}
where $(Y^i,Z^i) := \Phi(z^i)$, $i=1,2$.
Then there is $\gamma > 0$ and $q\in (0,1)$ such that for any $t\in [0,T]$ we have
\begin{equation*}
\E\left[e^{\gamma t}|Y^1_t-Y^2_t|^2\right]+\|Z^1-Z^2\|_{\mathbb H^2_\gamma}^2\le q\|z^1-z^2\|^2_{\mathbb H^2_\gamma}\,.
\end{equation*}
Moreover, there is  $\gamma>0$ such that $q\in(0,1/2)$.
\end{lemma}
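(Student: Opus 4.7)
The strategy is to adapt the It\^o/Young-inequality argument used for Lemma~\ref{lem classic bsde iteration}; the one new feature is the extra term $K|Z^1_s-Z^2_s|$ in the Lipschitz estimate on $F$, which has to be controlled by a second Young splitting and then absorbed into the $|\delta Z|^2$ integral on the left-hand side.

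I would first set $\delta Y := Y^1-Y^2$, $\delta Z := Z^1-Z^2$, $\delta z := z^1-z^2$ and $\delta F_s := F_s(z^1_s,Z^1_s) - F_s(z^2_s,Z^2_s)$, fix a parameter $\gamma>0$ to be chosen later, and apply It\^o's formula to $e^{\gamma t}|\delta Y_t|^2$. Integrating from $t$ to $T$, using $\delta Y_T = 0$, and taking expectation---the local martingale part being a genuine uniformly integrable martingale by Remark~\ref{rem stoch int unif int} applied to $\delta Y\in\mathcal S^2$ and $\delta Z\in\mathbb H^2$---yields
\[
\E\big[e^{\gamma t}|\delta Y_t|^2\big] + \E\int_t^T e^{\gamma s}|\delta Z_s|^2\,ds
= \E\int_t^T e^{\gamma s}\big(2\,\delta Y_s\,\delta F_s - \gamma|\delta Y_s|^2\big)\,ds.
\]

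Next I would insert the Lipschitz bound $|\delta F_s|\leq \theta|\delta z_s|+K|\delta Z_s|$ and split the cross term with two independent Young parameters $\alpha,\beta>0$:
\[
2\theta|\delta Y_s||\delta z_s| \leq \alpha|\delta Y_s|^2 + \tfrac{\theta^2}{\alpha}|\delta z_s|^2,\qquad
2K|\delta Y_s||\delta Z_s| \leq \beta|\delta Y_s|^2 + \tfrac{K^2}{\beta}|\delta Z_s|^2.
\]
Choosing $\gamma = \alpha+\beta$ cancels the $|\delta Y_s|^2$ contribution on the right, and choosing $\beta > K^2$ makes $c := 1 - K^2/\beta$ strictly positive so that the $(K^2/\beta)\int_t^T e^{\gamma s}|\delta Z_s|^2\,ds$ term can be moved to the left. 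Dividing through by $c\in(0,1)$ gives
\[
\E\big[e^{\gamma t}|\delta Y_t|^2\big] + \E\int_t^T e^{\gamma s}|\delta Z_s|^2\,ds \leq \frac{\theta^2}{\alpha c}\|\delta z\|_{\mathbb H^2_\gamma}^2.
\]

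Finally, I would apply this estimate twice: at $t=0$ it gives $\|\delta Z\|_{\mathbb H^2_\gamma}^2 \leq (\theta^2/(\alpha c))\|\delta z\|_{\mathbb H^2_\gamma}^2$, and at arbitrary $t\in[0,T]$ it gives $\E[e^{\gamma t}|\delta Y_t|^2]\leq (\theta^2/(\alpha c))\|\delta z\|_{\mathbb H^2_\gamma}^2$. Summing the two bounds produces the claimed inequality with $q := 2\theta^2/(\alpha c)$. Taking, for instance, $\beta = 2K^2$ so that $c = 1/2$, the contraction $q<1$ reduces to $\alpha > 4\theta^2$ (so $\gamma > 4\theta^2+2K^2$), while the ``moreover'' assertion $q<1/2$ is obtained by strengthening to $\alpha > 8\theta^2$ and hence $\gamma > 8\theta^2+2K^2$. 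The main obstacle---really the only substantive deviation from Lemma~\ref{lem classic bsde iteration}---is engineering the Young remainder on the $|\delta Z|^2$ term to have coefficient strictly less than one so that it can be absorbed into the left-hand side; this is precisely what forces the two-parameter Young split and the constraint $\beta>K^2$, and it is why $\gamma$ must now scale with both $\theta$ and $K$ rather than $\theta$ alone.
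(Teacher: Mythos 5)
Your proposal is correct and follows essentially the same route as the paper's proof: It\^o's formula applied to $e^{\gamma t}|\delta Y_t|^2$, the Lipschitz bound on $\delta F$, Young's inequality to cancel the $|\delta Y|^2$ terms and absorb the $K^2$-weighted $|\delta Z|^2$ term into the left-hand side, and then a large choice of $\gamma$ to force $q<1$ (or $q<1/2$). The only differences are cosmetic --- you use two Young parameters $\alpha,\beta$ where the paper uses a single $\varepsilon$ with $\gamma=(\theta+K)\varepsilon$, and you explicitly combine the $t=0$ and general-$t$ estimates (at the cost of a harmless factor $2$ in $q$) to pass from $\int_t^T$ to the full $\mathbb H^2_\gamma$ norm, a step the paper leaves implicit.
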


\begin{proof}
Consider $\gamma > 0$ which we will fix later.
We denote  $\delta z:=z^1-z^2$, $\delta Z:=Z^1-Z^2$, and $\delta Y:=Y^1-Y^2$. We then apply It\^o's formula to $e^{\gamma t}|\delta Y_t|^2$:
\begin{equation*}
\begin{split}
e^{\gamma t}|\delta Y_t|^2+&\int_t^T e^{\gamma s}|\delta Z_s|^2ds\\=& \int_{t}^{T}e^{\gamma s}\left(2\delta Y_s
(F_s(z^1_s,Z^1_s)-F_s(z^2_s,Z^2_s))-\gamma |\delta Y_s|^2\right)ds\\&-2\int_{t}^{T}e^{\gamma s} \delta Z_s \delta Y_s\, dW_s\,.
\end{split}
\end{equation*}
The expectation of the stochastic integral is $0$ due to Remark~\ref{rem stoch int unif int}.
Hence, by taking expectation we derive from the equality above that 
\begin{equation*}
\begin{split}
&\E\left[e^{\gamma t}|\delta Y_t|^2+\int_t^T e^{\gamma s}|\delta Z_s|^2ds\right] \\
&\qquad\qquad=\E\int_{t}^{T}e^{\gamma s}\left(2\delta Y_s
(F_s(z^1_s,Z^1_s)-F_s(z^2_s,Z^2_s))-\gamma |\delta Y_s|^2\right)\,ds\,.
\end{split}
\end{equation*}	
By the Lipschitz property of the generator and by the Young inequality we observe that, for any $\varepsilon>0$,
\begin{equation*}
\begin{split}
\E&\left[e^{\gamma t}|\delta Y_t|^2+\int_t^T e^{\gamma s}|\delta Z_s|^2ds\right]\le \E\int_{t}^{T}e^{\gamma s}\left(2\delta Y_s
(\theta|\delta z_s|+K|\delta Z_s|)-\gamma |\delta Y_s|^2\right)\,ds\\
&\le\E\int_{t}^{T}e^{\gamma s}\left((\theta+K)\varepsilon|\delta Y_s|^2+\theta\varepsilon^{-1}|\delta z_s|^2+K\varepsilon^{-1}|\delta Z_s|^2-\gamma |\delta Y_s|^2\right)ds\,.
\end{split}
\end{equation*}
Take $\gamma>0$ sufficiently large so that $\tilde{q}:=\max(\frac{(\theta+K)K}{\gamma},\frac{(\theta+K)\theta}{\gamma})\in(0,1/2)$. Choose $\varepsilon$ such that $\gamma=(\theta+K)\varepsilon$. Thus
\begin{equation}
\begin{split}
\label{estimate in policy iteration}
\E&\left[e^{\gamma t}|\delta Y_t|^2+\left(1-\tilde q\right)\int_t^T e^{\gamma s}|\delta Z_s|^2ds\right]\le\E\left[\int_{t}^{T}e^{\gamma s}\tilde q|\delta z_s|^2ds\right].
\end{split}
\end{equation}
Dividing by $1-\tilde q\in (1/2,1)$ we obtain
\begin{equation*}
\E\left[e^{\gamma t}|\delta Y_t|^2+\int_t^T e^{\gamma s}|\delta Z_s|^2ds\right]\le q\E\left[\int_{t}^{T}e^{\gamma s}|\delta z_s|^2ds\right],
\end{equation*}
where $q:=\frac{\tilde q}{1-\tilde q}$. Since $0<\tilde q< 1/2$ we have that $q\in (0,1)$.
Therefore from~\eqref{estimate in policy iteration} we have  for any $t\in[0,T]$ 
\begin{equation*}
\E\left[e^{\gamma t}|Y^1_t-Y^2_t|^2\right]+\|Z^1-Z^2\|_{\mathbb H^2_\gamma}^2\le q\|z^1-z^2\|^2_{\mathbb H^2_\gamma}\,.
\end{equation*}

By choosing $\gamma$ such that $\tilde{q}\in(0,1/3)$ we get that $q\in(0,1/2)$.
This concludes the proof of the lemma.	
\end{proof}

We now state a comparison principle for BSDEs.
\begin{lemma}
\label{comparison principle}
Consider the following BSDEs:
\[
Y^i_t=\xi^i+\int_t^T\phi^i(s,Z^i_s)ds-\int_t^T Z^i_s\,dW_s\,,\,\,t\in[0,T],\,\,\,i=1,2.
\]
Assume that $\xi^i\in L^2(\Omega,\mathcal{F}_T)$, $i=1,2$, and $\xi^1\le \xi^2$ a.s. Let $\phi^i:\Omega \times [0,T] \times\R^d\rightarrow\R$, $i=1,2$, be such that for all $z\in \mathbb R^d$ the processes $(\phi^i(t,z))_{t\in[0,T]}$  are progressively measurable, $\phi^i(t,0)\in\mathbb{H}^2$, and such that there is $\theta>0$ so that for all $t\in[0,T]$, $z,z'\in\R^d$ we have
\[
|\phi^i(t,z)-\phi^i(t,z')|\le\theta|z-z'|\,\,a.s.
\] 
Moreover, suppose that for $Z^1, Z^2 \in \mathbb H^2$ it holds that 
\[
\phi^1(t,Z^1_t)\le \phi^2(t,Z_t^1) \,\,\,\, \forall t \in [0,T]\,,\,\,a.s.
\]
Then $Y_t^1\le Y_t^2$ for all $0\le t\le T$ a.s.
\end{lemma}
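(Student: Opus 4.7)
The plan is to run the standard linearisation-plus-Girsanov argument for BSDEs. Set $\delta Y_t := Y^1_t - Y^2_t$, $\delta Z_t := Z^1_t - Z^2_t$, and $\delta\xi := \xi^1 - \xi^2$, which is $\le 0$ a.s. Subtracting the two BSDEs gives
\begin{equation*}
\delta Y_t = \delta\xi + \int_t^T \bigl[\phi^1(s,Z^1_s) - \phi^2(s,Z^2_s)\bigr]\,ds - \int_t^T \delta Z_s\,dW_s\,.
\end{equation*}
The key idea is to decompose the driver difference by inserting $\phi^2(s,Z^1_s)$:
\begin{equation*}
\phi^1(s,Z^1_s) - \phi^2(s,Z^2_s) = \underbrace{\bigl[\phi^1(s,Z^1_s) - \phi^2(s,Z^1_s)\bigr]}_{=:\,g_s\,\le\,0} + \underbrace{\bigl[\phi^2(s,Z^1_s) - \phi^2(s,Z^2_s)\bigr]}_{=:\,h_s}\,.
\end{equation*}
The first bracket is nonpositive by the pointwise hypothesis on the generators. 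For the second, the Lipschitz property of $\phi^2$ lets me linearise: define the predictable process
\begin{equation*}
\beta_s := \frac{\phi^2(s,Z^1_s) - \phi^2(s,Z^2_s)}{|\delta Z_s|^2}\,(\delta Z_s)^\top \,\mathbf{1}_{\{\delta Z_s \neq 0\}}\,,
\end{equation*}
so that $h_s = \beta_s\,\delta Z_s$ and $|\beta_s|\le\theta$ pointwise.

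Next I would apply Girsanov's theorem. Since $\beta$ is bounded, $\mathcal E(\beta\bullet W)$ is a true martingale, so $d\tilde{\mathbb P} := \mathcal E(\beta\bullet W)_T\,d\mathbb P$ defines a probability measure equivalent to $\mathbb P$, and $\tilde W_t := W_t - \int_0^t \beta_s^\top\,ds$ is a $\tilde{\mathbb P}$-Brownian motion. Substituting, the BSDE for $\delta Y$ collapses into the affine form
\begin{equation*}
\delta Y_t = \delta\xi + \int_t^T g_s\,ds - \int_t^T \delta Z_s\,d\tilde W_s\,.
\end{equation*}
Because $\beta$ is bounded, $d\tilde{\mathbb P}/d\mathbb P$ lies in every $L^p(\mathbb P)$, so Hölder's inequality gives $\delta Z\in \tilde{\mathbb H}^2$ from $\delta Z\in\mathbb H^2$ under $\mathbb P$; hence $\int_0^\cdot \delta Z_s\,d\tilde W_s$ is a genuine $\tilde{\mathbb P}$-martingale and a standard localisation/uniform integrability argument (of the kind given in Remark~\ref{rem stoch int unif int}) justifies the vanishing of its conditional expectation.

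Taking conditional expectation under $\tilde{\mathbb P}$ given $\mathcal F_t$ then yields
\begin{equation*}
\delta Y_t = \tilde{\mathbb E}\!\left[\delta\xi + \int_t^T g_s\,ds \,\Big|\, \mathcal F_t\right] \le 0\quad\text{a.s.,}
\end{equation*}
because both $\delta\xi\le 0$ and $g_s\le 0$, which is the desired inequality $Y^1_t\le Y^2_t$ for every $t\in[0,T]$ (noting equivalence of the two measures preserves almost-sure statements). The only delicate point is the integrability check for the stochastic integral under the new measure; everything else is an algebraic decomposition. The boundedness of $\beta$ (a consequence of the uniform Lipschitz constant $\theta$) is what makes both Girsanov and the martingale property straightforward rather than requiring a BMO or exponential-moments argument.
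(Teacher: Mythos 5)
Your argument is correct in outline and is the standard linearisation--Girsanov proof of the comparison principle; the paper itself does not reprove this lemma but simply cites Pham~\cite[Theorem 6.2.2]{pham book}, and what you have written is essentially the proof behind that citation. The decomposition inserting $\phi^2(s,Z^1_s)$, the bounded process $\beta$ with $h_s=\beta_s\,\delta Z_s$ and $|\beta_s|\le\theta$, the change of measure, and the final conditional expectation are all exactly right. One technical justification is stated incorrectly, though the conclusion it serves is still true: H\"older's inequality does \emph{not} give $\delta Z\in\tilde{\mathbb H}^2$ from $\delta Z\in\mathbb H^2$, because $\tilde{\mathbb E}\int_0^T|\delta Z_s|^2\,ds=\mathbb E\bigl[\Lambda_T\int_0^T|\delta Z_s|^2\,ds\bigr]$ with $\Lambda_T:=d\tilde{\mathbb P}/d\mathbb P\in L^p(\mathbb P)$ for all $p<\infty$ but $\int_0^T|\delta Z_s|^2\,ds$ only in $L^1(\mathbb P)$, so there is no admissible pair of conjugate exponents. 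The correct route to the martingale property of $\int_0^\cdot\delta Z_s\,d\tilde W_s$ under $\tilde{\mathbb P}$ is via Burkholder--Davis--Gundy:
\begin{equation*}
\tilde{\mathbb E}\Bigl[\sup_{t\le T}\Bigl|\int_0^t\delta Z_s\,d\tilde W_s\Bigr|\Bigr]
\le C\,\mathbb E\Bigl[\Lambda_T\Bigl(\int_0^T|\delta Z_s|^2\,ds\Bigr)^{1/2}\Bigr]
\le C\,\|\Lambda_T\|_{L^2(\mathbb P)}\,\|\delta Z\|_{\mathbb H^2}<\infty\,,
\end{equation*}
which shows the local martingale is uniformly integrable and hence a true $\tilde{\mathbb P}$-martingale (this is the same device as in Remark~\ref{rem stoch int unif int}, only applied after the measure change). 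A similar $L^2\times L^2$ H\"older estimate shows $\delta\xi$ and $\int_t^T g_s\,ds$ are $\tilde{\mathbb P}$-integrable, so the final conditional expectation is well defined. With that one repair the proof is complete.
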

\begin{proof}
	This follows from, e.g., Pham~\cite[Theorem 6.2.2]{pham book}.
\end{proof}

The following two lemmas are auxiliary results we need in Section~\ref{sec stability max}.
\begin{lemma}
	\label{lem grad itr pert max}
	Let $F,\bar F:\Omega \times [0,T] \times \mathbb R^d \to \mathbb R$ be measurable functions and let $F$ satisfy the hypotheses of Lemmas~\ref{bsde well defined} and~\ref{lem classic bsde iteration}.
	Fix $\xi\in L^2(\Omega,\mathcal{F}_T)$. 
	Let $\bar{z}, z, Z, \bar Z \in \mathbb H^2$ and $Y, \bar Y \in \mathcal S^2$ be 
	such that 
	\[
	\bar{Y}_t=\xi+\int_t^T\bar{F}_s(\bar{z}_s)\,ds-\int_t^T\bar{Z}_s\,dW_s\,,\,\,\, t\in [0,T]\,,
	\]
	and
	\[
	Y_t=\xi+\int_t^T F_s(z_s)\,ds-\int_t^TZ_s\,dW_s\,,\,\,\, t\in [0,T]\,.
	\]
	Then there is $\gamma>0$ and $q\in(0,1)$ such that for $t\in[0,T]$ we have
	\begin{equation*}
	e^{\gamma t}\E|\bar{Y}_t-Y_t|^2+\|\bar{Z}-Z\|^2_{\mathbb H^2_\gamma}
	\le q \|\bar{z}-z\|^2_{\mathbb H^2_\gamma}+\frac{q}{\theta}\|\bar{F}(\bar{z})-F(\bar{z})\|^2_{\mathbb H^2_\gamma}.
	\end{equation*}
\end{lemma}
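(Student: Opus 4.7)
The approach is to mimic exactly the proof of Lemma~\ref{lem classic bsde iteration}: apply It\^o's formula to $e^{\gamma t}|\bar Y_t - Y_t|^2$, take expectation (using Remark~\ref{rem stoch int unif int} to discard the stochastic integral, since $\bar Y - Y \in \mathcal S^2$ and $\bar Z - Z \in \mathbb H^2$), and then tame the cross term by Young's inequality. The new feature, compared to Lemma~\ref{lem classic bsde iteration}, is that the generator difference is $\bar F_s(\bar z_s) - F_s(z_s)$ rather than $F_s(z^1_s) - F_s(z^2_s)$, so it is not controlled purely by $|\delta z_s|$; an additive perturbation term $\bar F_s(\bar z_s) - F_s(\bar z_s)$ must be tracked separately.

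Setting $\delta Y := \bar Y - Y$, $\delta Z := \bar Z - Z$, $\delta z := \bar z - z$ and $\delta F_s := \bar F_s(\bar z_s) - F_s(\bar z_s)$, It\^o's formula and expectation yield
\[
\E e^{\gamma t}|\delta Y_t|^2 + \|\delta Z\|^2_{\mathbb H^2_\gamma}
= \E\int_t^T e^{\gamma s}\Bigl[2\delta Y_s\cdot\bigl(\bar F_s(\bar z_s) - F_s(z_s)\bigr) - \gamma|\delta Y_s|^2\Bigr]\, ds.
\]
The key algebraic step is the decomposition
\[
\bar F_s(\bar z_s) - F_s(z_s) = \bigl[F_s(\bar z_s) - F_s(z_s)\bigr] + \delta F_s,
\]
where the first bracket is bounded by $\theta|\delta z_s|$ by the Lipschitz hypothesis on $F$.

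One then applies Young's inequality: $2|\delta Y_s|\cdot|\bar F_s(\bar z_s) - F_s(z_s)| \leq \gamma|\delta Y_s|^2 + \gamma^{-1}|\bar F_s(\bar z_s) - F_s(z_s)|^2$, which cancels the $-\gamma|\delta Y_s|^2$ term, and bounds the squared generator difference by the weighted triangle inequality $|a+b|^2 \leq (1+\varepsilon)|a|^2 + (1+\varepsilon^{-1})|b|^2$ with $\varepsilon = 1/\theta$, giving
\[
|\bar F_s(\bar z_s) - F_s(z_s)|^2 \leq \theta(\theta+1)|\delta z_s|^2 + (\theta+1)|\delta F_s|^2.
\]
Putting this together produces
\[
\E e^{\gamma t}|\delta Y_t|^2 + \|\delta Z\|^2_{\mathbb H^2_\gamma}
\leq \frac{\theta(\theta+1)}{\gamma}\|\delta z\|^2_{\mathbb H^2_\gamma} + \frac{\theta+1}{\gamma}\|\delta F\|^2_{\mathbb H^2_\gamma}.
\]
Set $q := \theta(\theta+1)/\gamma$, and choose $\gamma > \theta(\theta+1)$ so that $q \in (0,1)$; then $q/\theta = (\theta+1)/\gamma$ matches exactly the coefficient in the statement.

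The only real subtlety is this last quantitative point: a naïve choice $\varepsilon = 1$ in the triangle inequality produces coefficient $q/\theta^2$ rather than $q/\theta$ on the perturbation term, so the weight $\varepsilon = 1/\theta$ is essential to obtain the exact ratio claimed. This is not conceptually hard; it is simply a matter of tuning weights. All remaining manipulations (martingale property of the stochastic integral, $\mathbb H^2_\gamma$ bookkeeping) are identical to those used in the proof of Lemma~\ref{lem classic bsde iteration}.
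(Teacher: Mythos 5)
Your proposal is correct and follows essentially the same route as the paper: It\^o's formula applied to $e^{\gamma t}|\delta Y_t|^2$, expectation killing the stochastic integral via Remark~\ref{rem stoch int unif int}, the decomposition $\bar F_s(\bar z_s)-F_s(z_s)=[F_s(\bar z_s)-F_s(z_s)]+\delta F_s$, and Young's inequality to absorb the cross term. The only cosmetic difference is bookkeeping — the paper applies Young termwise with a common parameter $\delta=\gamma/(1+\theta)$ whereas you apply Young once with weight $\gamma$ and then the weighted triangle inequality with $\varepsilon=1/\theta$ — and both yield the identical constants $q=\theta(\theta+1)/\gamma$ and $q/\theta=(\theta+1)/\gamma$.
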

\begin{proof}
	Consider $\gamma>0$ which we will fix later.
	We denote $\tilde{Y}:=\bar{Y}-Y$, $\tilde{Z}:=\bar{Z}-Z$, and $\tilde z:=\bar{z}-z$. We then apply It\^o's formula to $e^{\gamma t}|\tilde{Y}_t|^2$:
	\begin{equation*}
	\begin{split}
	e^{\gamma t}|\tilde Y_t|^2+\int_t^T e^{\gamma s}|\tilde Z_s|^2\,ds 
	& =\int_{t}^{T}e^{\gamma s}(2\tilde Y_s\, (\bar{F}_s(\bar{z}_s)-F_s(z_s))-\gamma |\tilde Y_s|^2)\,ds\\&-2\int_{t}^{T}e^{\gamma s}\tilde{Z} _s\,\tilde Y_s \, dW_s\,.
	\end{split}
	\end{equation*}
	Due to Remark~\ref{rem stoch int unif int}, the stochastic integral vanishes by taking expectation. 
	Hence
	\begin{equation}
	\begin{split}
	\label{eq estimate in the lemma for pert  via}
	\E\left[e^{\gamma t}|\tilde Y_t|^2+\int_t^T e^{\gamma s}|\tilde Z_s|^2\,ds \right] =\E\left[\int_{t}^{T}e^{\gamma s}(2\tilde Y_s\, (\bar{F}_s(\bar{z}_s)-F_s(z_s))-\gamma |\tilde Y_s|^2)\,ds\right]\,.
	\end{split}
	\end{equation}
	Notice that due to~\eqref{eq property of F in the lemma bsde well defined} for all $s\in[t,T]$ it holds that
	\[
	|\bar{F}_s(\bar{z}_s)-F_s(z_s)|\le|\bar{F}_s(\bar{z}_s)-F_s(\bar{z}_s)|+|F_s(\bar{z}_s)-F_s(z_s)|\le  |\bar{F}_s(\bar{z}_s)-F_s(\bar{z}_s)|+\theta|\bar{z}_s-z_s|.
	\]
	Then by the Young inequality we continue our estimate~\eqref{eq estimate in the lemma for pert  via}, noting that for any $\delta>0$, we have
	\begin{equation*}
	\begin{split}
	\E\Bigg[&e^{\gamma t}|\tilde Y_t|^2+ \int_t^T e^{\gamma s}|\tilde Z_s|^2\,ds\Bigg]\\
	&\le  \E\left[\int_{t}^{T}e^{\gamma s}(2|\tilde Y_s|(|\bar{F}_s(\bar{z}_s)-F_s(\bar{z}_s)|+\theta|\bar{z}_s-z_s|)-\gamma |\tilde Y_s|^2)ds\right]\\
	&\le \E\left[\int_{t}^{T}e^{\gamma s}\left((1+\theta)\delta|\tilde Y_s|^2+\delta^{-1}(\theta|\tilde z_s|^2+|\bar{F}_s(\bar{z}_s)-F_s(\bar{z}_s)|^2)-\gamma |\tilde Y_s|^2\right)ds\right].
	\end{split}
	\end{equation*}
	Fix $\gamma > (1+\theta)\theta$ and $q = (1+\theta)\theta/\gamma$. 
	Let $\delta = \gamma / (1+\theta)$.
	Then
	\begin{equation*}
	\begin{split}
	\E\left[e^{\gamma t}|\tilde Y_t|^2+\int_t^T e^{\gamma s}|\tilde Z_s|^2\,ds \right]&\le \E\left[\int_{t}^{T}e^{\gamma s} q\left(|\tilde z_s|^2+\frac1\theta|\bar{F}_s(\bar{z}_s)-F_s(\bar{z}_s)|^2\right)ds\right]\\
	&\le q \|\tilde z\|^2_{\mathbb H^2_\gamma}+\frac{q}{\theta}\|\bar{F}(\bar{z})-F(\bar{z})\|^2_{\mathbb H^2_\gamma}.
	\end{split}
	\end{equation*}
	This concludes the proof of the lemma.
\end{proof}

\begin{lemma}
	\label{lem policy impr pert max}
	Let $\bar F:\Omega \times [0,T] \times \mathbb R^d \to \mathbb R$ be a measurable function and let $F$ satisfies the hypotheses of Lemmas~\ref{policy well defined} and~\ref{policy convergence estimate}. Fix $\xi\in L^2(\Omega,\mathcal{F}_T)$. 
	Let $\bar{z},z,\bar{Z},Z\in\mathbb{H}^2$ and $\bar{Y},Y\in \mathcal S^2$ be such that
	\[
	\bar{Y}_t=\xi+\int_t^T\bar{F}_s(\bar{z}_s,\bar{Z}_s)\,ds-\int_t^T\bar{Z}_s\,dW_s\,,\,\,\, t\in [0,T]\,,
	\]
	and
	\[
	Y_t=\xi+\int_t^TF_s(z_s,Z_s)\,ds-\int_t^TZ_s\,dW_s\,,\,\,\, t\in [0,T]\,.
	\]
	Then there is $\gamma>0$ and $q\in(0,1)$ such that for $t\in[0,T]$ we have
	\begin{equation*}
	e^{\gamma t}\E|\bar{Y}_t-Y_t|^2+\|\bar{Z}-Z\|^2_{\mathbb H^2_\gamma}
	\le q \|\bar{z}-z\|^2_{\mathbb H^2_\gamma}+\frac{q}{\max(K,\theta)}\|\bar{F}_s(\bar{z}_s,\bar{Z}_s)-F_s(\bar{z}_s,\bar{Z}_s)\|^2_{\mathbb H^2_\gamma}.
	\end{equation*}
\end{lemma}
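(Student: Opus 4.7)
The plan is to follow the It\^o--Young--absorption scheme used in Lemmas \ref{policy convergence estimate} and \ref{lem grad itr pert max}, combining the $K|\tilde Z_s|$ absorption from the former with the $\Delta F$ perturbation handling from the latter. Set $\tilde Y := \bar Y - Y$, $\tilde Z := \bar Z - Z$, $\tilde z := \bar z - z$, and $\Delta F_s := \bar F_s(\bar z_s,\bar Z_s) - F_s(\bar z_s,\bar Z_s)$. First I would apply It\^o's formula to $e^{\gamma t}|\tilde Y_t|^2$ for a parameter $\gamma>0$ to be fixed later, take expectation, and discard the stochastic integral via Remark \ref{rem stoch int unif int}, arriving at
\[
\E\left[e^{\gamma t}|\tilde Y_t|^2+\int_t^T e^{\gamma s}|\tilde Z_s|^2\,ds\right] = \E\int_t^T e^{\gamma s}\bigl[2\tilde Y_s(\bar F_s(\bar z_s,\bar Z_s) - F_s(z_s, Z_s)) - \gamma|\tilde Y_s|^2\bigr]\,ds.
\]

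Second, a triangle inequality combined with the Lipschitz hypothesis on $F$ inherited from Lemma \ref{policy convergence estimate} yields the pointwise bound $|\bar F_s(\bar z_s,\bar Z_s) - F_s(z_s,Z_s)| \le |\Delta F_s| + \theta|\tilde z_s| + K|\tilde Z_s|$. I would then apply Young's inequality $2ab \le \delta a^2 + \delta^{-1}b^2$ separately to each of the three summands in the cross-product $2|\tilde Y_s|\bigl(|\Delta F_s| + \theta|\tilde z_s| + K|\tilde Z_s|\bigr)$, using three distinct Young parameters, chosen so that: (i) the coefficient of $|\tilde Z_s|^2$ on the right equals some $c\in(0,1/2)$, to be absorbed into the LHS $\int e^{\gamma s}|\tilde Z_s|^2\,ds$; (ii) the coefficient of $|\tilde z_s|^2$ equals a target value $\tilde q$; and (iii) the coefficient of $|\Delta F_s|^2$ equals $\tilde q/\max(K,\theta)$. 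A direct check shows the choices $\delta_{\Delta F} = \max(K,\theta)/\tilde q$, $\delta_z = \theta^2/\tilde q$, $\delta_Z = K^2/c$ realize (i)--(iii); taking $\gamma$ at least as large as $\delta_{\Delta F}+\delta_z+\delta_Z$ then cancels the total $|\tilde Y_s|^2$ contribution against the $-\gamma|\tilde Y_s|^2$ term.

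After moving the $c\|\tilde Z\|^2_{\mathbb H^2_\gamma}$ contribution to the LHS and dividing by $1-c\in(1/2,1)$ (which only increases the coefficient of $\E[e^{\gamma t}|\tilde Y_t|^2]$, so the claimed inequality still holds for the combined LHS appearing in the statement), I would read off the conclusion with $q := \tilde q/(1-c)$. The main obstacle is the simultaneous calibration: unlike Lemma \ref{lem grad itr pert max}, the $K|\tilde Z_s|$ term forces one to both (a) absorb the resulting $|\tilde Z_s|^2$ contribution on the right into the left and (b) tune the remaining two Young parameters so as to produce exactly the pinned constants $q$ and $q/\max(K,\theta)$ prescribed by the statement, with no slack; accomplishing both at once fixes $\gamma$ in terms of $K,\theta,\tilde q$, and requires $\tilde q\in(0,1-c)$ (in particular $\tilde q\in(0,1/2)$ suffices) so that ultimately $q\in(0,1)$.
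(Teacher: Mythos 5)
Your proposal is correct and follows essentially the same route as the paper's proof: It\^o's formula applied to $e^{\gamma t}|\tilde Y_t|^2$, the triangle/Lipschitz bound $|\bar F_s(\bar z_s,\bar Z_s)-F_s(z_s,Z_s)|\le|\Delta F_s|+\theta|\tilde z_s|+K|\tilde Z_s|$, Young's inequality, absorption of the $|\tilde Z|^2$ term into the left-hand side, and division by the resulting factor in $(1/2,1)$. The only (cosmetic) difference is that you calibrate three separate Young parameters while the paper uses a single common $\delta=\gamma/(1+\theta+K)$ and lets $\tilde q=\max\bigl((1+\theta+K)K,(1+\theta+K)\theta\bigr)/\gamma$; both choices yield the stated constants.
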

\begin{proof}
	Consider $\gamma>0$ which we will fix later.
	We denote $\tilde{Y}:=\bar{Y}-Y$, $\tilde{Z}:=\bar{Z}-Z$, and $\tilde z:=\bar{z}-z$. We then apply It\^o's formula to $e^{\gamma t}|\tilde{Y}_t|^2$:
	\begin{equation*}
	\begin{split}
	e^{\gamma t}|\tilde Y_t|^2&+\int_t^T e^{\gamma s}|\tilde Z_s|^2\,ds  \\
	&=\int_{t}^{T}e^{\gamma s}(2\tilde Y_s\, (\bar{F}_s(\bar{z}_s,\bar{Z}_s)-F_s(z_s,Z_s))-\gamma |\tilde Y_s|^2)\,ds\\&-2\int_{t}^{T}e^{\gamma s}\tilde{Z} _s\,\tilde Y_s \, dW_s\,.
	\end{split}
	\end{equation*}
	Due to Remark~\ref{rem stoch int unif int}, the stochastic integral vanishes by taking expectation. 
	Hence
	\begin{equation*}
	\begin{split}
	\E&\left[e^{\gamma t}|\tilde Y_t|^2+\int_t^T e^{\gamma s}|\tilde Z_s|^2\,ds \right] \\
	&=\E\left[\int_{t}^{T}e^{\gamma s}(2\tilde Y_s\, (\bar{F}_s(\bar{z}_s,\bar{Z}_s)-F_s(z_s,Z_s))-\gamma |\tilde Y_s|^2)\,ds\right]\,.
	\end{split}
	\end{equation*}
	Notice that by assumptions of the lemma for all $s\in[t,T]$ it holds that
	\begin{equation*}
	\begin{split}
	|\bar{F}_s&(\bar{z}_s,\bar{Z}_s)-F_s(z_s,Z_s)|\le|\bar{F}_s(\bar{z}_s,\bar{Z}_s)-F_s(\bar{z}_s,\bar{Z}_s)|+|F_s(\bar{z}_s,\bar{Z}_s)-F_s(z_s,Z_s)|\\
	&\le  |\bar{F}_s(\bar{z}_s,\bar{Z}_s)-F_s(\bar{z}_s,\bar{Z}_s)|+\theta|\bar{z}_s-z_s|+K|\bar{Z}_s-Z_s|\,.
	\end{split}
	\end{equation*}
	Then by the Young inequality for any $\delta>0$, we have
	\begin{equation*}
	\begin{split}
	\E\Bigg[&e^{\gamma t}|\tilde Y_t|^2+ \int_t^T e^{\gamma s}|\tilde Z_s|^2\,ds\Bigg]\\
	\leq &  \E\left[\int_{t}^{T}e^{\gamma s}(2|\tilde Y_s|(|\bar{F}_s(\bar{z}_s,\bar{Z}_s)-F_s(\bar{z}_s,\bar{Z}_s)|+\theta|\bar{z}_s-z_s|+K|\bar{Z}_s-Z_s|)-\gamma |\tilde Y_s|^2)ds\right]\\
	\leq & \E\Bigg[\int_{t}^{T}e^{\gamma s}\bigg((\theta+K+1)\delta|\tilde Y_s|^2+\delta^{-1}(K|\tilde{Z}|^2+\theta|\tilde z_s|^2+|\bar{F}_s(\bar{z}_s,\bar{Z}_s)-F_s(\bar{z}_s,\bar{Z}_s)|^2) \\
	& \qquad -\gamma |\tilde Y_s|^2\bigg)ds\Bigg].
	\end{split}
	\end{equation*}
	Let us take $\gamma >0$ sufficiently large so that $\tilde q := \max(\frac{(1+\theta+K)K}{\gamma}, \frac{(1+\theta+K)\theta}{\gamma}) \in (0,1/2)$.
	Let $\delta := \gamma/(1+\theta+K)$ so that
	\begin{equation*}
	\begin{split}
	\E&\left[e^{\gamma t}|\tilde Y_t|^2+\left(1-\tilde q\right)\int_t^T e^{\gamma s}|\tilde Z_s|^2\,ds \right]\\
	&\le \E\left[\int_{t}^{T}e^{\gamma s} \tilde q\left(|\tilde z_s|^2+\frac1{\max(K,\theta)}|\bar{F}_s(\bar{z}_s,\bar{Z}_s)-F_s(\bar{z}_s,\bar{Z}_s)|^2\right)ds\right]\\
	&\le \tilde q \|\tilde z\|^2_{\mathbb H^2_\gamma}+\frac{\tilde q}{\max(K,\theta)}\|\bar{F}_s(\bar{z}_s,\bar{Z}_s)-F_s(\bar{z}_s,\bar{Z}_s)\|^2_{\mathbb H^2_\gamma}.
	\end{split}
	\end{equation*}
	Dividing by $(1-\tilde q) \in (1/2, 1)$ we obtain
	\begin{equation*}
	\begin{split}
	\E & \left[e^{\gamma t}|\tilde Y_t|^2+\int_t^T e^{\gamma s}|\tilde Z_s|^2\,ds \right]
	\le q \|\tilde z\|^2_{\mathbb H^2_\gamma}+\frac{q}{\max(K,\theta)}\|\bar{F}_s(\bar{z}_s,\bar{Z}_s)-F_s(\bar{z}_s,\bar{Z}_s)\|^2_{\mathbb H^2_\gamma}\,,
	\end{split}
	\end{equation*}
	where $q := \frac{\tilde q}{1-\tilde q}$.
	Since $0 < \tilde q < 1/2$ we have that $q\in (0,1)$.
\end{proof}

\subsection{BSDE with drivers of quadratic growth.}
Since we are using BSDE theory in the proof of the main result, we would like to present some results on BSDE with drivers of quadratic growth. We refer to~\cite{Richou2012}. 

Consider the following system:
\begin{equation}\label{eq FBSDE system}
\begin{split}
X_t&=x+\int_0^tb(s,X_s)\,ds+\int_0^t\sigma(s,X_s)\,dW_s\,,\\
Y_t&=g(X_T)+\int_t^Tf(s,X_s,Z_s)\,ds-\int_t^TZ_s\,dW_s.
\end{split}
\end{equation}

\begin{theorem}[Theorem 3.6 in~\cite{Richou2012}]
	\label{thr well-defined qBSDE from Adrien's paper}
	Let $b:[0,T]\times\R^d\rightarrow\R^d$ and $\sigma:[0,T]\times\R^d\rightarrow\R^{d\times d'}$ be Lipschitz continuous with Lipschitz constant $C$ and $|b(t,0)|\le C$ and $|\sigma(t,0)|\le C$ for all $t\in[0,T]$. Let $g:\R^d\rightarrow\R$ and $f:[0,T]\times\R^d\times\R^d\rightarrow\R$ be measurable functions and let us assume that there exists constant $C$ such that for all $r\in\R^+, t\in[0,T], x, x'\in\R^d$, and $z, z'\in \R^d$
	\begin{equation*}
	\begin{split}
	&|f(t,x,z)|\le C(1+|z|^2)\,,\\
	&|g(x)|\le C\,,\\
	&|f(t,x,z)-f(t,x,z')|\le C(1+|z|+|z'|)|z-z'|\,,\\
	&|f(t,x,z)-f(t,x',z)|\le C(1+|x|^r+|x'|^r)|x-x'|\,,\\
	&|g(x)-g(x')|\le C(1+|x|^r+|x'|^r)|x-x'|\,.
	\end{split}
	\end{equation*}
	There exists a solution $(Y,Z)$ of the Markovian BSDE~\eqref{eq FBSDE system} in $\mathcal{S}^2\times\mathbb{H}^2$ and this solution is unique among solutions $(Y,Z)\in\mathcal{S}^2\times\mathbb{H}^2$ such that $Y$ is bounded. Moreover, we have
	\begin{equation*}
	|Z_t|\le C(1+|X_t|^{1+r})\,\,\forall t\in[0,T]\,,
	\end{equation*}
	and
	\begin{equation*}
	\|Z\bullet W\|_{BMO}<\infty\,,
	\end{equation*}
	where 
	\begin{equation*}
	\|M\|_{BMO}:=\sup_{\tau\in \mathcal{T}_0^T}\|\E[\langle M\rangle_T-\langle M\rangle_{\tau}\mid\mathcal{F}_{\tau}]\|_{\infty}\,;
	\end{equation*}
	here the supremum is taken over all stopping times in $[0,T]$.
\end{theorem}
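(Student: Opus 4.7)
The plan is to split the argument into three parts: (i) existence of a solution with $Y$ bounded, together with the a priori $L^\infty$ bound on $Y$ and the BMO estimate for $Z\bullet W$; (ii) uniqueness within the class of bounded $Y$; and (iii) the pointwise pathwise bound $|Z_t|\le C(1+|X_t|^{1+r})$. Parts (i) and (ii) follow the now-classical route for quadratic BSDEs: truncation of the driver in $z$, monotone stability in the sense of Kobylanski, and Girsanov transforms with BMO drivers. Part (iii) is the main technical point and will go through Malliavin calculus combined with a change of measure that linearizes away the BMO term $\nabla_z f\cdot\nabla_x Z$.

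For (i), I would approximate the driver by a sequence $(f_n)$ of globally Lipschitz-in-$z$ functions obtained via a smooth cut-off $\chi_n(|z|)$, preserving the $x$-regularity and the pointwise bound $|f_n(t,x,z)|\le C(1+|z|^2)$. Each truncated BSDE is uniquely solvable in $\mathcal S^2\times\mathbb H^2$ by Lemma~\ref{bsde well defined}. Applying It\^o's formula to $\Phi(Y^n_t)$ with $\Phi(y)=(e^{\lambda y}-1-\lambda y)/\lambda^2$ for sufficiently large $\lambda$ lets the term $\tfrac12\Phi''(Y^n)|Z^n|^2$ absorb the $C|Z^n|^2$ coming from $f_n$, yielding a uniform $L^\infty$ bound $\|Y^n\|_\infty\le C_1$ depending only on $\|g\|_\infty$, $C$, and $T$. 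The same identity between a stopping time $\tau$ and $T$ yields
\[
\E\bigl[\langle Z^n\bullet W\rangle_T-\langle Z^n\bullet W\rangle_\tau \,\big|\, \mathcal F_\tau\bigr]\le C_2,
\]
i.e.\ a uniform BMO estimate. Kobylanski's monotone stability theorem then extracts a limit $(Y,Z)\in\mathcal S^2\times\mathbb H^2$ solving the original BSDE with both bounds inherited. Uniqueness within bounded $Y$ follows by linearizing the difference of two bounded solutions: the resulting linear BSDE has $z$-coefficient $\alpha_s$ with $|\alpha_s|\le C(1+|Z^1_s|+|Z^2_s|)$ in BMO, the John--Nirenberg inequality gives that $\mathcal E(\alpha\bullet W)$ is a true martingale, and Girsanov forces the difference to vanish.

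The main obstacle is step (iii). The plan is to further smooth the coefficients so that $b,\sigma,f,g$ become $C^1_b$, obtaining an approximation $(Y^N,Z^N)$ that is Malliavin differentiable. A direct Malliavin computation together with the Markovian structure gives the identification $Z^N_t=\nabla_x Y^N_t\,\sigma(t,X^N_t)$, where $\nabla_x Y^N$ solves the linear BSDE
\begin{equation*}
\nabla_x Y^N_t=\nabla g(X^N_T)\nabla_x X^N_T+\int_t^T\bigl(\nabla_x f\cdot\nabla_x X^N_s+\nabla_z f\cdot\nabla_x Z^N_s\bigr)\,ds-\int_t^T\nabla_x Z^N_s\,dW_s.
\end{equation*}
Since $|\nabla_z f|\le C(1+|Z^N|)$ is BMO by step (i), the change of measure $d\mathbb Q/d\mathbb P=\mathcal E(\nabla_z f\bullet W)_T$ is admissible and rewrites $\nabla_x Y^N_t$ as a $\mathbb Q$-conditional expectation of $\nabla g(X^N_T)\nabla_x X^N_T+\int_t^T\nabla_x f\cdot\nabla_x X^N_s\,ds$. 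Combining the polynomial-growth bounds $|\nabla_x f|+|\nabla g|\le C(1+|X^N|^r)$ with the standard moment estimates for $\nabla_x X^N$ (bounded under Lipschitz $b,\sigma$) and polynomial moment estimates for $X^N$ yields $|\nabla_x Y^N_t|\le C(1+|X^N_t|^r)$ uniformly in $N$. Multiplying by the linear growth of $\sigma$ gives $|Z^N_t|\le C(1+|X^N_t|^{1+r})$, and this bound passes to the limit by the same Kobylanski stability argument used in (i). The BMO conclusion $\|Z\bullet W\|_{\mathrm{BMO}}<\infty$ is already delivered by step (i), so the proof is complete.
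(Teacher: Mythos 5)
This statement is imported verbatim from Richou \cite{Richou2012} (it is quoted as ``Theorem 3.6'' there); the paper offers no proof of its own, so there is nothing internal to compare your argument against. Your outline is, in substance, the standard proof of this result and matches the route taken in the cited reference: an exponential transform for the uniform $L^\infty$ bound on $Y$ and the BMO bound on $Z\bullet W$, truncation plus stability for existence, linearization plus a BMO--Girsanov argument for uniqueness in the bounded class, and differentiation of the Markovian BSDE combined with a change of measure to obtain the deterministic polynomial bound on $Z$. Two places where your sketch is thinner than a complete write-up requires. First, Kobylanski's monotone stability theorem needs the truncated drivers to converge monotonically; a plain cut-off $\chi_n(|z|)$ does not produce a monotone sequence in $n$, so you must either arrange monotonicity (e.g.\ by inf-convolution of the driver) or invoke one of the later stability results that dispense with it. Second, in step (iii) the representation $\nabla_x Y^N_t=\E^{\mathbb Q}\bigl[\nabla g(X^N_T)\nabla_x X^N_T+\int_t^T\nabla_x f\cdot\nabla_x X^N_s\,ds\mid\mathcal F_t\bigr]$ must be estimated using \emph{conditional} moments of $X^N$ under $\mathbb Q$; since the Girsanov density is the stochastic exponential of a BMO martingale, this step requires the conditional reverse H\"older inequality, which converts $\E^{\mathbb Q}[\,|X^N_T|^{r}\mid\mathcal F_t]$ into a $\mathbb P$-moment of a slightly higher power and hence into $C(1+|X^N_t|^{r})$; without it the passage from polynomial growth of $\nabla g$ and $\nabla_x f$ to the pointwise bound on $\nabla_x Y^N$ does not close. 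Both points are standard and are handled explicitly in~\cite{Richou2012}, so your proposal is a faithful, if compressed, reconstruction of the known proof rather than a new argument.
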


\end{document}